\newenvironment{restatetheorem}[1]
  {\innercustomthm}
  {\endinnercustomthm}
\newtheorem{theorem}{Theorem}[section]
\newtheorem{lemma}[theorem]{Lemma}
\newtheorem{question}[theorem]{Question}
\newtheorem{proposition}[theorem]{Proposition}
\newtheorem{corollary}[theorem]{Corollary}
\theoremstyle{definition}
\newenvironment{restatedefinition}[1]
  {\innercustomdef}
  {\endinnercustomdef}
\newtheorem{definition}[theorem]{Definition}
\theoremstyle{remark}
\newtheorem{remark}[theorem]{Remark}
\newtheorem{example}[theorem]{Example}
\newcommand{\C}{\mathcal C}
\newcommand{\D}{\mathcal D}
\newcommand{\E}{\mathcal E}
\newcommand{\T}{\mathcal T}
\newcommand{\U}{\mathcal U} 
\newcommand{\V}{\mathcal V}
\renewcommand{\P}{\mathcal P}
\newcommand{\A}{\mathcal A}
\newcommand{\R}{\mathbb R}
\renewcommand{\S}{\mathbb S}
\DeclareMathOperator{\conv}{conv}
\DeclareMathOperator{\cl}{cl}
\DeclareMathOperator{\spann}{span}
\DeclareMathOperator{\Tk}{Tk}
\DeclareMathOperator{\odim}{odim}
\DeclareMathOperator{\cdim}{cdim}
\DeclareMathOperator{\code}{code}
\DeclareMathOperator{\interior}{int}
\DeclareMathOperator{\trim}{trim}
\newcommand{\od}{:=}
\newcommand{\Code}{\mathbf{Code}}
\newcommand{\ParCode}{\mathbf{P}_\Code}
\renewcommand{\S}{\mathcal S}
\newcommand{\edit}[1]{#1}
\begin{document}
\title{Embedding Dimension Phenomena in Intersection Complete Codes}

\author{R. Amzi Jeffs}

\begin{abstract}
Two tantalizing invariants of a combinatorial code $\C\subseteq 2^{[n]}$ are $\cdim(\C)$ and $\odim(\C)$, the smallest dimension in which $\C$ can be realized by convex closed or open sets, respectively. Cruz, Giusti, Itskov, and Kronholm showed that for intersection complete codes $\C$ with $m+1$ maximal codewords, $\odim(\C)$ and $\cdim(\C)$ are both bounded above by $\max\{2,m\}$. Results of Lienkaemper, Shiu, and Woodstock imply that $\odim$ and $\cdim$ may differ, even for intersection complete codes. We add to \edit{the literature on open and closed embedding dimensions of intersection complete codes} with the following results:\begin{itemize}
\item If $\C$ is a simplicial complex, then $\cdim(\C) = \odim(\C)$,
\item If $\C$ is intersection complete, then $\cdim(\C)\le \odim(\C)$,
\item If $\C\subseteq 2^{[n]}$ is intersection complete with $n\ge 2$, then $\cdim(\C) \le \min \{2d+1, n-1\}$, where $d$ is the dimension of the simplicial complex of $\C$, and 
\item For each simplicial complex $\Delta\subseteq 2^{[n]}$ with $m\ge 2$ facets, the code $\S_\Delta$ \edit{$\od (\Delta \ast (n+1)) \cup \{[n]\}$} $\subseteq 2^{[n+1]}$ is intersection complete, has $m+1$ maximal codewords, and satisfies $\odim(\S_\Delta)=m$. In particular, for each $n\ge 3$ there exists an intersection complete code $\C\subseteq 2^{[n]}$ with $\odim(\C) = \binom{n-1}{\lfloor (n-1)/2\rfloor}$.  
\end{itemize}
A key tool in our work is the study of sunflowers: arrangements of convex open sets in which the sets simultaneously meet in a central region, and nowhere else. We use Tverberg's theorem to study the structure of ``$k$-flexible" sunflowers, and consequently obtain new lower bounds on $\odim(\C)$ for intersection complete codes $\C$. \end{abstract}

\thanks{Jeffs' research is supported by a graduate fellowship from NSF grant DGE-1761124}
\date{\today.\\ 2010 Mathematics Subject Classification. 32F27, 52A20, 52C99, 52A35.\\Department of Mathematics.  University of Washington, Seattle, Wa 98195}
\maketitle

\section{Introduction}\label{sec:intro}

In \cite{neuralring13}, Curto, Itskov, Veliz-Cuba, and Youngs introduced \emph{convex codes} to mathematically model stimulus reconstruction from neural data, particularly in the context of hippocampal place cells. Classifying and understanding convex codes have been active areas of recent mathematical research, bringing together tools and perspectives from topology \cite{local15, connected}, algebra \cite{grobner, polarization, factorcomplex}, and discrete geometry \cite{undecidability, polarcomplex,sunflowers,CUR, obstructions}. A complete classification of convex codes is far out of reach for the moment, but progress can yield new techniques for analyzing neural data, as well as a deeper understanding of the mathematical theory of convex sets. In this paper, we give new bounds on the open and closed embedding dimensions of intersection complete codes, and families of examples where these bounds are tight. In particular, we provide infinite families of intersection complete codes for which open embedding dimension grows exponentially in the number of neurons, while closed embedding dimension grows only linearly.

Before stating our results we recall some definitions and frame our main questions of study. A convex code (see Definition \ref{def:convexcode} below) is a special case of a \emph{combinatorial code}, which is a collection of subsets of $[n]\od \{1,\ldots, n\}$. Due to the biological motivation behind our work, we think of the elements of $[n]$ as \emph{neurons}, and each element of a code as recording a set of neurons which fired together in a small window of time.

The elements of a code are called \emph{codewords}, and for concision we often omit braces and commas when writing codewords. For example, we may write 124 instead of $\{1,2,4\}$. The \emph{weight} of a codeword is simply the number of neurons it contains. We will often think of a code as a partially ordered set under containment---for example, we may speak of maximal codewords, which are not properly contained in any other codeword. When writing down a specific code, we will bold the maximal codewords.

Codes can arise abstractly when one wishes to describe how a certain collection of sets covers a space, as follows. Let $X$ be a set and $\U = \{U_1,\ldots, U_n\}$ a collection of subsets of $X$. One may form the \emph{code of $\U$ in $X$}, a combinatorial code whose codewords describe how the $U_i$ intersect and cover one another: \[
\code(\U, X) \od \bigg\{\sigma\subseteq [n]\ \bigg|\ \bigcap_{i\in\sigma} U_i \setminus \bigcup_{j\in[n]\setminus\sigma} U_j\neq \emptyset\bigg\}.
\]
The region $\bigcap_{i\in\sigma} U_i \setminus \bigcup_{j\in[n]\setminus\sigma} U_j$ is called the \emph{atom} of $\sigma$, and denoted by $\A_\U^\sigma$. The space $X$ is called the \emph{ambient space} or \emph{stimulus space}, and the $U_i$ are called \emph{receptive fields} or \emph{firing regions}. Note that the receptive fields are indexed by neurons.  If $\C = \code(\U, X)$, then the collection $\U$ is called a \emph{realization} of $\C$ in $X$. For concision, we will write $U_\sigma$ for $\bigcap_{i\in\sigma} U_i$ (and similarly define $V_\sigma$ when working with various $V_i$), and adopt the convention that $U_\emptyset = X$.

Unless otherwise specified, throughout this paper the ambient space will be $\R^d$, and the $U_i$ will be (possibly empty) convex sets that  are either all open, or all closed. We will write $\code(\U)$ instead of $\code(\U, \R^d)$ when the ambient dimension is clear. We will also adopt the usual convention in the study of convex codes that $\emptyset$ is contained in all codes, i.e. that there is always a point in the ambient space not covered by any $U_i$. In our examples we illustrate open sets with a solid border so that our figures are clean and readable. The accompanying text will always specify whether we are regarding the sets in question as open or closed. Finally, we will always assume $n\ge 1$ (i.e. that we are not working with an empty set of neurons).

\begin{definition}\label{def:convexcode}
A code $\C\subseteq 2^{[n]}$ is called an \emph{open convex} code if it has a realization consisting of convex open sets in $\R^d$. Similarly, $\C$ is called \emph{closed convex} if it has a realization consisting of closed convex sets in $\R^d$. 
\end{definition}

\begin{example}\label{ex:realization}
The figure below shows a realization in $\R^2$ of the (open/closed) convex code $\C = \{\mathbf{123}, 12, 23, 2, 3, \emptyset\}$. The atom $\A_\U^{23}$ is highlighted in grey.
\[
\includegraphics[width=22em]{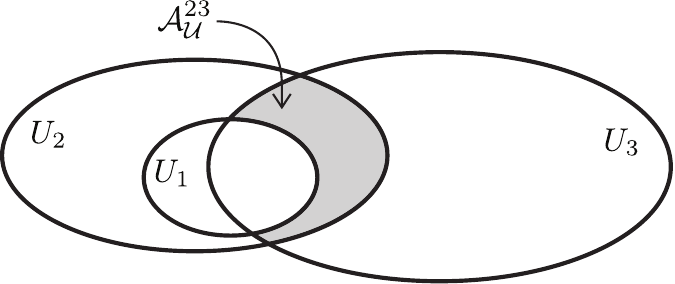}
\]
\end{example}

In the neuroscientific context mentioned above, open convex codes are of greater interest than closed convex codes, since receptive fields have been experimentally observed to be full-dimensional (see \cite[Figure 1]{local15} for example). However, we will also study closed convex codes in this paper to build on the work of \cite{openclosed}, and to contrast their behavior to that of open convex codes. Moreover, it is of broad mathematical interest to develop our understanding of closed convex sets and their intersection patterns, since they are ubiquitous in fields such as optimization and discrete geometry (see for example \cite[Section 3]{hellytoday}, \cite[Chapters 6 and 8]{matousek}, and \cite[Chapter 8]{schrijver}). 

The study of convex codes asks two main questions. First, given a code $\C\subseteq 2^{[n]}$, \textbf{when can we find a (closed or open) convex realization of $\mathbf{\C}$?} Second, if we can find a realization, \textbf{what is the smallest dimension in which we can do so?} Formally, we wish to investigate the open and closed embedding dimensions of combinatorial codes, described below.

\begin{definition}\label{def:odim}\label{def:cdim}
Let $\C\subseteq 2^{[n]}$ be a code. The \emph{open embedding dimension} of $\C$, denoted by $\odim(\C)$, is the smallest $d$ so that $\C$ has a realization in $\R^d$ consisting of  convex open sets, or $\infty$ if no convex open realization exists. Similarly, the \emph{closed embedding dimension}, denoted by $\cdim(\C)$, is the smallest dimension in which $\C$ has a closed convex realization, or $\infty$ if none exist. 
\end{definition}

\begin{remark}
The study of open and closed embedding dimension generalizes the the study of $d$-representable simplicial complexes, a classical topic in discrete geometry (see \cite{tancer} for a survey). A simplicial complex $\Delta$ is said to be \emph{$d$-representable} if one can find a collection $\U$ of convex sets in $\R^d$ such that for each $\sigma\subseteq [n]$ we have $\sigma\in \Delta$ if and only if $U_\sigma \neq \emptyset$ (equivalently, $\Delta$ is the smallest simplicial complex containing $\code(\U)$). In this context the requirement that all sets in $\U$ be open or closed is immaterial; one may work with open or closed convex sets interchangeably. Thus the differences that can arise between open and closed embedding dimensions of codes (which we will see starkly in Corollary \ref{cor:exponential}) provide interesting evidence that the study of convex codes significantly generalizes the study of $d$-representable complexes.
\end{remark}

Note that the convex realization in Example \ref{ex:realization} is not minimal with respect to dimension, since we could flatten the $U_i$ into (closed or open) intervals to obtain a realization in $\R^1$. Thus $\odim(\C) = \cdim(\C) = 1$ for $\C = \{\mathbf{123}, 12, 23, 2, 3, \emptyset\}$.

In this paper we will study codes that are \emph{intersection complete}: the intersection of any two codewords is again a codeword. An important result of \cite{openclosed} is that intersection complete codes are always open and closed convex. More specifically, we have the following: \begin{theorem}[Special case of Theorem 1.2 of \cite{openclosed}]\label{thm:openclosed}
Let $\C\subseteq 2^{[n]}$ be an intersection complete code with $m+1$ maximal codewords. Then \[
\max\{\odim(\C), \cdim(\C)\} \le \max\{2, m\}.
\]
\end{theorem} We will prove a new bound on closed embedding dimension which improves the bound in Theorem \ref{thm:openclosed} for many intersection complete codes (see Theorem \ref{thm:cdimlinear}). In contrast to the closed case, we will show that the bound $\odim(\C)\le \max\{2,m\}$ can be tight for any choice of $m\ge 2$ (see Theorem \ref{thm:SDelta}). Except where stated otherwise, every code we work with in this paper is intersection complete.

A special case of intersection complete codes is that of a simplicial complex. For simplicial complexes, open and closed embedding dimensions are equal. Although this result is well known among the neural codes community, we are not aware of any written proofs. We provide one below.

\begin{theorem}\label{thm:complexes}
Let $\C\subseteq 2^{[n]}$ be a simplicial complex. Then $\cdim(\C) = \odim(\C)$. 
\end{theorem}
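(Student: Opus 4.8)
The plan is to establish the two inequalities $\cdim(\C)\le\odim(\C)$ and $\odim(\C)\le\cdim(\C)$ by a common ``discretization'' of a realization. Write $\Delta=\C$. I will use throughout the elementary fact that in any convex realization $\W$ of $\Delta$ (open or closed) one has $W_\sigma\neq\emptyset$ if and only if $\sigma\in\Delta$: if $\sigma\in\Delta$ then $\A_\W^\sigma\neq\emptyset$ forces $W_\sigma\neq\emptyset$, while if $x\in W_\sigma$ then $\{i:x\in W_i\}$ is a codeword of $\W$, hence a face of $\Delta$ containing $\sigma$, so $\sigma\in\Delta$ because $\Delta$ is a complex. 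In particular, for each $\sigma\in\Delta$ we may fix a point $p_\sigma\in\A_\W^\sigma$, i.e.\ a point lying in $W_i$ precisely for $i\in\sigma$.

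To prove $\cdim(\C)\le\odim(\C)$, take an open convex realization $\U$ of $\Delta$ in $\R^{\odim(\C)}$, fix points $p_\sigma$ as above, and set $V_i\od\conv\{p_\sigma:\sigma\in\Delta,\ i\in\sigma\}$, with $V_i=\emptyset$ when $i$ is not a vertex of $\Delta$. Each $V_i$ is a polytope, hence closed and convex, and since every generator of $V_i$ lies in the convex set $U_i$ we have $V_i\subseteq U_i$. Hence the $\V$-codeword of any point is a subset of its $\U$-codeword, so it is a face of $\Delta$; and for each $\sigma\in\Delta$ the point $p_\sigma$ lies in $V_i$ for $i\in\sigma$ and, using $V_j\subseteq U_j$ and $p_\sigma\notin U_j$, outside $V_j$ for $j\notin\sigma$, so $\sigma$ is a codeword of $\V$. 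As the $V_i$ are bounded, a far-away point realizes $\emptyset$, and we get $\code(\V)=\Delta$.

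To prove $\odim(\C)\le\cdim(\C)$, start from a closed convex realization in $\R^{\cdim(\C)}$ and apply the construction just given to replace it by a \emph{bounded} closed realization $\V=\{V_i\}$ equipped with points $p_\sigma\in\A_\V^\sigma$. Now fatten: for $\epsilon>0$ let $U_i$ be the open $\epsilon$-neighborhood of $V_i$ (and $U_i=\emptyset$ if $V_i=\emptyset$); this is open and convex. I claim $\code(\U)=\Delta$ for all small enough $\epsilon$. Every face $\sigma$ stays witnessed by $p_\sigma$: since $p_\sigma\notin V_j$ and $V_j$ is compact, $\operatorname{dist}(p_\sigma,V_j)>0$ for $j\notin\sigma$, so once $\epsilon$ is below the minimum of these finitely many positive numbers the $\U$-codeword of $p_\sigma$ is exactly $\sigma$. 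And no non-face appears: a non-face $\tau$ contains a minimal non-face $\tau'$, for which $\bigcap_{i\in\tau'}V_i=\emptyset$ by the fact recorded above; since the $V_i$ are compact, $x\mapsto\max_{i\in\tau'}\operatorname{dist}(x,V_i)$ is bounded away from $0$, so for $\epsilon$ small the $\epsilon$-neighborhoods $U_i$, $i\in\tau'$, still have empty intersection, whence $\bigcap_{i\in\tau}U_i=\emptyset$. Choosing $\epsilon$ below all of these finitely many thresholds---one per face, one per minimal non-face---gives $\code(\U)=\Delta$.

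The real obstacle is the ``no new codewords'' half of the fattening step. Fattening an arbitrary closed realization naively can create spurious deep intersections, for instance near the central hole of a realization of a cycle such as the boundary of a triangle, or because two disjoint unbounded closed convex sets can be asymptotic and so have overlapping $\epsilon$-neighborhoods for \emph{every} $\epsilon>0$. Both pathologies are avoided by first passing to the bounded polytopal realization $\V$: for compact sets, an empty intersection is stable, i.e.\ $\bigcap_{i\in\tau'}V_i=\emptyset$ implies that the corresponding $\epsilon$-neighborhoods have empty intersection once $\epsilon$ is small, which is exactly the compactness estimate used above. Everything else is routine, and only finitely many inequalities are placed on $\epsilon$ because $\Delta$ has finitely many faces and finitely many minimal non-faces.
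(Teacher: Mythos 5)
Your proof is correct, and one half of it takes a genuinely different route from the paper. For $\odim(\C)\le\cdim(\C)$ your argument is essentially the paper's: the paper intersects the closed sets with a large ball to gain compactness, picks a point $p_c$ in each atom, and takes Minkowski sums with a small $\varepsilon$-ball, choosing $\varepsilon$ so that no $p_c$ is newly covered and disjoint intersections stay disjoint; you gain compactness instead by first passing to a polytopal realization and you make the stability of empty intersections explicit via minimal non-faces, which is a cleaner justification of the same fattening step. The real divergence is in $\cdim(\C)\le\odim(\C)$: the paper gets this as a special case of its general theorem for intersection complete codes (Theorem \ref{thm:cdimleodim}), proved by trimming the open realization to a non-degenerate one and then invoking the closure theorem of \cite{openclosed}, whereas you give a direct, self-contained construction, taking $V_i$ to be the convex hull of the atom points $p_\sigma$ with $i\in\sigma$, so that $V_i\subseteq U_i$ forces every new codeword to be a subset of a face, hence a face, while each $p_\sigma$ still witnesses $\sigma$. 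Your route is more elementary, avoids the external non-degeneracy machinery, and produces compact polytopal closed realizations that you then reuse in the fattening step; its cost is that the hull argument genuinely needs the simplicial complex property (subsets of codewords are codewords) and so, unlike the paper's trimming argument, does not extend to general intersection complete codes, which is the generality the paper needs elsewhere.
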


\begin{proof}
In Theorem \ref{thm:cdimleodim}, we will show that $\cdim(\C)\le \odim(\C)$. Thus we just need to prove that $\odim(\C)\le\cdim(\C)$. Let $\V = \{V_1,\ldots, V_n\}$ be a closed realization of $\C$ in $\R^{\cdim(\C)}$. By intersecting all the $V_i$ with a sufficiently large closed ball, we may assume that they are bounded, and hence compact. For each nonempty codeword $c\in\C$, choose a point $p_c\in \A_\V^c$. By compactness, each $p_c$ has positive distance to any set $V_i$ that does not contain it. Likewise, any choice of $\sigma,\tau\subseteq [n]$ for which $V_\sigma$ and $V_\tau$ are disjoint, the sets $V_\sigma$ and $V_\tau$ must have positive distance between one another. Thus we may choose $\varepsilon$ such that replacing the $V_i$ by their Minkowski sums with an open $\varepsilon$-ball neither causes any $V_i$ to cover some $p_c$ it did not before, nor causes disjoint $V_\sigma$ and $V_\tau$ to intersect. This creates a collection of convex open sets whose code contains all the codewords of $\C$, and no new maximal codewords. Since $\C$ is a simplicial complex, this is exactly a convex open realization of $\C$. 
\end{proof}

\begin{example}
Consider the code $\C = \{\mathbf{123},\mathbf{34}, 12, 13, 23, 1,2,3,4,\emptyset\}$, and note that $\C$ is a simplicial complex. The lefthand side of the figure below shows a realization of $\C$ in $\R^2$ with closed convex sets, as well as possible choices of points $p_c$ for $c\in \C$ as used in the proof above. The righthand side shows the open realization given in the proof above, which results from adding a small $\varepsilon$-ball to each $V_i$. 

\[
\includegraphics[width=34em]{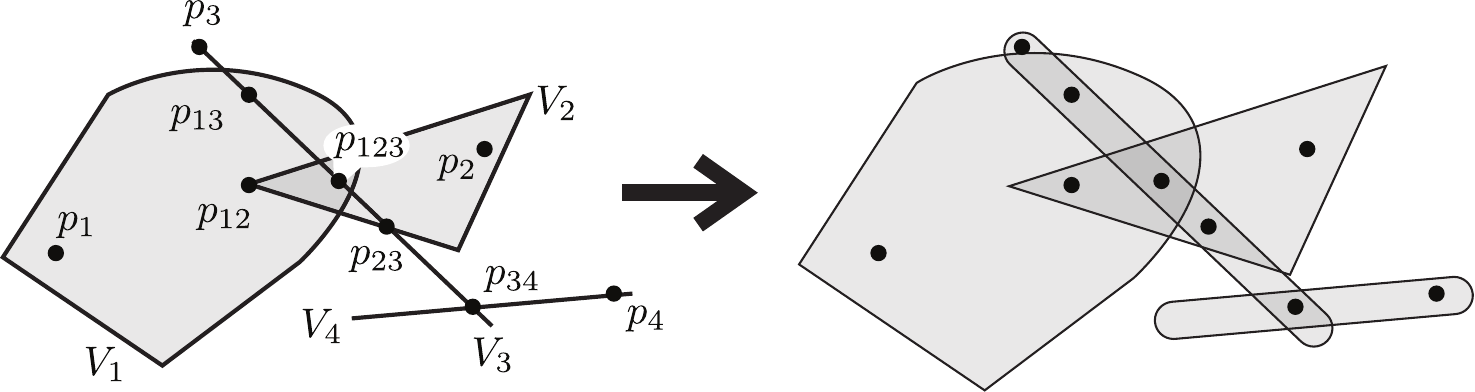}
\]
\end{example}

Can the above techniques be extended to realizations of codes that are not simplicial complexes? The answer in general is no, even for intersection complete codes, a fact which was first observed implicitly in the results of \cite{sunflowers, obstructions}. Corollary \ref{cor:exponential} will yield a plethora of examples of intersection complete codes on $n$ neurons that are closed convex in $\R^{n-1}$, but not open convex in $\R^{n-1}$. For such codes, adding an $\varepsilon$-ball to sets in a closed realization in $\R^{n-1}$ will always fail to produce an open realization.

The following theorems are the main contributions of this work, and give us a handle on how open and closed dimension behave for intersection complete codes. 

\begin{theorem}\label{thm:cdimleodim}
Let $\C\subseteq 2^{[n]}$ be an intersection complete code. Then $\cdim(\C)\le \odim(\C)$.
\end{theorem}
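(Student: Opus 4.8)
The plan is to start from an open convex realization $\U = \{U_1, \ldots, U_n\}$ of $\C$ in $\R^{\odim(\C)}$ and convert it to a closed convex realization in the same dimension, exploiting the intersection complete hypothesis. The natural first attempt — take closures $\overline{U_i}$ — fails in general because closing can create spurious intersections (atoms that were empty may become nonempty along shared boundaries) and can destroy atoms by merging codewords. So the real work is to first perturb $\U$ into a ``nice'' realization whose closures behave correctly. Concretely, I would first intersect each $U_i$ with a large open ball so that all sets are bounded, then pick a witness point $p_c \in \A_\U^c$ for each nonempty codeword $c$, and shrink each $U_i$ slightly (replacing it with the Minkowski difference by a small $\varepsilon$-ball, i.e. the set of points at distance $>\varepsilon$ from the complement) so that every $p_c$ still lies in the interiors we want and no $p_c$ is within $\varepsilon$ of any $U_i$ not containing it. This guarantees that after taking closures, each $p_c$ is still a witness for exactly its codeword $c$, so no codewords of $\C$ are lost.

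The remaining danger is that closures introduce \emph{new} codewords, and this is exactly where intersection completeness enters. The key observation is that for convex sets, $\overline{U_\sigma} \subseteq \overline{U}_\sigma$ and more usefully that a point in $\bigcap_{i \in \sigma}\overline{U_i}$ that lies outside $\bigcup_{j \notin \sigma} \overline{U_j}$ must be a limit of points in $U_\sigma$ (using that each $U_i$ is open and convex, so $\overline{U_i} = \overline{\interior U_i}$ and boundary points are approached from inside); hence any codeword of $\code(\{\overline{U_i}\})$ is ``dominated'' by a codeword of $\C$ in the sense of being the intersection of a face-like family. I would argue that every maximal codeword of the closed realization is already a codeword of $\C$: if $x \in \bigcap_{i \in \sigma}\overline{U_i}$, then for the set $\tau = \{i : x \in \overline{U_i}\} \supseteq \sigma$, each $\overline{U_i}$ with $i \in \tau$ meets every neighborhood of $x$ in points of $U_i$, and a compactness/Helly-flavored or direct convexity argument shows there are codewords $c_i \in \C$ near $x$ with $i \in c_i$; intersection completeness then lets us intersect the relevant witnesses to conclude $x$ lies in the closure of an atom of a genuine codeword of $\C$. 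The upshot is $\code(\{\overline{U_i}\}) $ has the same maximal codewords as $\C$, and since $\C$ is intersection complete it is determined by its maximal codewords together with all their pairwise (hence all finite) intersections — so the closed realization has exactly codeword set $\C$, possibly after one more round of the shrinking trick to kill any non-maximal spurious codewords.

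I expect the main obstacle to be the second part: rigorously controlling which new codewords appear upon taking closures and showing they are all already in $\C$. The subtlety is that $\overline{U_\sigma}$ can be strictly larger than $\overline{U}_\sigma \cap (\text{stuff})$ and that intersections of closures are not closures of intersections; one must use convexity crucially (for general open sets this completely fails). A clean way to handle this is to prove a lemma of the form: if $\U$ is a bounded open convex realization and $x$ lies in $\bigcap_{i\in\tau}\overline{U_i}$, then there is a codeword $c \in \code(\U)$ with $\tau \subseteq c$ and $x \in \overline{\A_\U^c}$, proved by taking a sequence $x_k \to x$ with $x_k$ in a fixed atom (possible after passing to a subsequence since there are finitely many atoms) and checking that its codeword $c$ contains $\tau$ because each $U_i$, $i \in \tau$, is eventually hit — wait, that needs care since $x_k$ need not be in $U_i$. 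The honest fix is to perturb $x$ in $\bigcap_{i\in\tau}\interior \overline{U_i} = \bigcap_{i\in\tau} U_i$ when $\tau$-codewords are full-dimensional, and otherwise argue on a face; this dimension-bookkeeping is the delicate step. Once that lemma is in hand, combining it with intersection completeness to pin down the full codeword set is routine, and a final application of the $\varepsilon$-shrinking trick from the proof of Theorem~\ref{thm:complexes} removes any lower codewords that snuck in, completing the bound $\cdim(\C) \le \odim(\C)$.
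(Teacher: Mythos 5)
Your first move (intersect with a large ball, then shrink each $U_i$ by an $\varepsilon$-ball so that witness points $p_c$ survive and closures of the shrunken sets stay inside the original open sets) is exactly the paper's trimming step, and it correctly guarantees that every codeword of $\C$ persists after taking closures. But the heart of the theorem --- that taking closures introduces \emph{no new} codewords --- is not established in your proposal. The lemma you rely on, that a point of $\bigcap_{i\in\sigma}\overline{U_i}$ not covered by the remaining closures is a limit of points of $U_\sigma$, is false for general open convex realizations (two externally tangent open disks give a point of $\overline{U_1}\cap\overline{U_2}$ with $U_{12}=\emptyset$), and you yourself flag that your attempted proof breaks (``that needs care since $x_k$ need not be in $U_i$''); the ``dimension-bookkeeping'' you defer is precisely the content of the non-degeneracy condition (Definition \ref{def:nondegen}(ii)) that the paper verifies for the trimmed sets and then discharges by citing the closure theorem of \cite{openclosed}. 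Your backstop argument is also unsound: an intersection complete code is \emph{not} determined by its maximal codewords together with their intersections --- e.g.\ $\{\mathbf{12},1,\emptyset\}$ and $\{\mathbf{12},\emptyset\}$ are both intersection complete with the same maximal codeword --- so ``the closed realization has the same maximal codewords as $\C$'' does not yield $\code(\{\overline{V_i}\})=\C$, and ``one more round of the shrinking trick'' is not a proof that the spurious lower codewords can be removed without damaging genuine ones.

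What is missing is the actual mechanism by which intersection completeness enters. The paper's route is Proposition \ref{prop:singlecover}: if $\sigma\in\Delta(\C)\setminus\C$, then in \emph{any} realization $U_\sigma$ is contained in a single $U_i$ with $i\notin\sigma$. Since trimming commutes with intersections and preserves containments (Proposition \ref{prop:trimcommutes}), this single-set containment survives the shrink, so the trimmed sets still realize $\C$ exactly; and because the closure of each trimmed set lies inside the original open set, one gets $\bigcap_{i\in\sigma}\partial V_i\subseteq\partial V_\sigma$, i.e.\ non-degeneracy, after which closures do not change the code. Your sketch never isolates this single-cover property, instead invoking intersection completeness only through the unproven limit lemma and the false ``determined by maximal codewords'' claim, so as written the argument has a genuine gap at its central step.
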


It is known that this inequality may be strict for intersection complete codes $\C\subseteq 2^{[n]}$, as mentioned above. In fact, the gap may be quite large: Theorem \ref{thm:cdimlinear} implies that $\cdim(\C)\le n-1$, while Corollary \ref{cor:exponential} says that $\odim(\C)$ \textbf{may be exponential in $n$}. 

\begin{theorem}\label{thm:cdimlinear}
Let $\C\subseteq 2^{[n]}$ be an intersection complete code {with $n\ge 2$}, and $d$ be one less than the weight of the largest codeword in $\C$ (i.e. $d=\dim(\Delta(\C))$). Then $\cdim(\C)\le \min \{2d+1, n-1\}$.
\end{theorem}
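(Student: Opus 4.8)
The plan is to produce a closed convex realization of $\C$ by combining a standard geometric realization of the underlying simplicial complex $\Delta = \Delta(\C)$ with a separating-hyperplane argument that carves out the missing atoms. Since $\C$ is intersection complete, the poset of codewords is a meet-semilattice, and one can track which codewords of $\Delta$ fail to lie in $\C$: the non-codewords of $\Delta$ form an up-set, and the key point is that each maximal non-codeword $\tau \notin \C$ can be "killed" by a single affine halfspace without disturbing the codewords we want to keep. First I would take the order complex / geometric realization of $\Delta$ in $\R^{2d+1}$: by the classical fact that any $d$-dimensional simplicial complex embeds in $\R^{2d+1}$, place the vertices in general position (e.g. on the moment curve) and let $V_i^{(0)}$ be the union of the closed simplices containing vertex $i$; this gives a closed convex realization only after we convexify, so more carefully I would instead let $V_i$ be small convex neighborhoods of the closed stars, arranged so that $\code(\V) \supseteq \Delta$ with the correct top-dimensional structure, exactly as in the proof of Theorem~\ref{thm:complexes} but done in $\R^{2d+1}$ rather than in $\cdim$. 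This realizes the simplicial complex $\Delta(\C)$; the remaining task is to delete the atoms $\A_\V^\tau$ for $\tau \notin \C$.

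Next I would handle the deletion of unwanted atoms. Because $\C$ is intersection complete, if $\tau\notin\C$ then no codeword of $\C$ equals $\tau$, and—crucially—the set of $\sigma\in\C$ with $\sigma\supseteq\tau$ has no element containing all of $\tau$'s neurons simultaneously in the sense needed; more precisely, for a maximal non-codeword $\tau$, the region $\bigcap_{i\in\tau} V_i$ needs to be shrunk so that it only meets those $V_j$ giving codewords of $\C$. The standard device here (used in \cite{openclosed} and the sunflower literature) is: pick a point in $\A_\V^\tau$, and push a hyperplane through $V_\tau$ to slice off a neighborhood of that point from each $V_i$ with $i\in\tau$, chosen so that the slice avoids the finitely many marked points $p_c$ ($c\in\C$) and avoids creating intersections among previously-disjoint sets. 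Convexity is preserved because intersecting a convex set with a halfspace is convex. Doing this for each of the finitely many atoms to be removed, one after another (and noting the operations are "local" enough not to interfere), yields a closed convex realization of exactly $\C$ in $\R^{2d+1}$, giving $\cdim(\C)\le 2d+1$.

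For the bound $\cdim(\C)\le n-1$: here I would invoke (or reprove) the result of \cite{openclosed} that an intersection complete code with $k$ maximal codewords has $\cdim \le \max\{2,k-1\}$—but that is in terms of $k$, not $n$, so instead the cleaner route is a direct construction in $\R^{n-1}$. Identify $[n]$ with the vertices of the standard simplex $\Delta^{n-1}\subseteq\R^{n-1}$ (an $(n-1)$-simplex has $n$ vertices and lives in $\R^{n-1}$), take $V_i$ to be the closed star of vertex $i$ in the barycentric subdivision, so that $\code$ of these closed convex sets is the full simplex $2^{[n]}$ restricted to $\Delta$, then perform the same atom-deletion procedure as above to cut down to $\C$. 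Since everything already sits in $\R^{n-1}$, we get $\cdim(\C)\le n-1$, and combining the two gives $\cdim(\C)\le\min\{2d+1,\,n-1\}$.

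The main obstacle I anticipate is making the atom-deletion step fully rigorous: one must verify that slicing $V_\tau$ with a halfspace to destroy $\A_\V^\tau$ (a) does not destroy any atom $\A_\V^\sigma$ with $\sigma\in\C$, (b) does not merge two previously disjoint intersections $V_\sigma, V_{\sigma'}$, and (c) does not accidentally create a new maximal codeword outside $\C$—and that these conditions can be met simultaneously for all unwanted $\tau$. The intersection completeness of $\C$ is exactly what guarantees (c): any codeword created as an intersection of surviving sets is an intersection of codewords of $\C$, hence in $\C$. Conditions (a) and (b) are handled by a compactness/general-position argument (choose the marked points $p_c$, note positive distances, and take the halfspaces to miss small balls around them), essentially the same estimate as in the proof of Theorem~\ref{thm:complexes}. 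Getting the bookkeeping right so that the finitely many slices are mutually compatible is the technical heart of the argument.
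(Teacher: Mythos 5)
Your two-stage plan (realize the simplicial complex $\Delta(\C)$, then carve away the atoms of faces not in $\C$) has a genuine gap at the carving stage, and it is the heart of the matter. By Proposition \ref{prop:singlecover}, in \emph{any} realization of an intersection complete code, for each $\tau\in\Delta(\C)\setminus\C$ the entire set $V_\tau$ must end up inside $V_{c_0}$, where $c_0$ is the unique minimal codeword of $\C$ containing $\tau$. Starting from a realization of $\Delta(\C)$, the atom of $\tau$ is a whole uncovered region of $V_\tau$, so ``deleting'' it means cutting away every point of $V_\tau\setminus V_{c_0}$ from at least one $V_i$ with $i\in\tau$, i.e.\ collapsing $V_\tau$ down to $V_{c_0}$ --- not slicing off a neighborhood of a single point. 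One halfspace per unwanted $\tau$ cannot do this in general (the region to be removed need not be separable by a single hyperplane from the atoms you must keep), and each cut of $V_i$ also shrinks $V_\sigma$ for every $\sigma\ni i$, so the finitely many cuts interact. Your condition (c) is not a consequence of intersection completeness: the code of the sliced arrangement is not a priori a collection of intersections of codewords of $\C$, and showing that no face of $\Delta(\C)\setminus\C$ survives (or reappears) as a codeword is exactly the statement to be proved. The $\varepsilon$-ball/compactness argument of Theorem \ref{thm:complexes} only handles perturbations that preserve all atoms; it says nothing about removing entire atoms. The first stage also has a gap as written: convex neighborhoods (or convexifications) of closed vertex stars of a general-position embedding of a $d$-complex in $\R^{2d+1}$ do not in general have nerve $\Delta$; the $(2d+1)$-representability of $d$-complexes is a theorem whose known proofs use a neighborly-polytope construction, not just general position.

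For contrast, the paper avoids carving altogether and builds the realization tightly from the start: Lemma \ref{lem:polycomplex} produces a polytopal complex in $\R^{\min\{2d+1,n-1\}}$ (a Schlegel diagram of the dual of a $(d+1)$-neighborly polytope) whose facets $P_1,\ldots,P_n$ have the property that any $d+1$ of them meet in a unique face; one point $p_c$ is placed in the relative interior of $P_c$ for each codeword $c\in\C$ only, and $V_i\od\conv\{p_c\mid c\in\Tk_\C(i)\}$ as in Definition \ref{def:Vi}. The face structure then forces $V_\sigma=\conv\{p_c\mid c\in\Tk_\C(\sigma)\}$ and the correct face relations (Lemmas \ref{lem:Hsupport}--\ref{lem:faces}), and Lemma \ref{lem:itworks} shows the code of $\V$ is exactly the intersection completion of $\C$, with intersection completeness entering only to identify this with $\C$. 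If you want to salvage your approach, the cleanest repair is to never create the unwanted atoms in the first place --- which is precisely what this construction does.
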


This bound is known to be tight. For every $d\ge 0$,  \cite{2dplus1} describes a $d$-dimensional simplicial complex on $n$ vertices whose closed embedding dimension is exactly $2d+1$ (which, in the family given, is the same as $\min\{2d+1, n-1\}$). {Our proof of Theorem \ref{thm:cdimlinear} is inspired by a construction of Wegner and Perel\textquotesingle man which was originally used to show that every $d$-dimensional simplicial complex is $(2d+1)$-representable (see \cite[Theorem 3.1]{tancer}). }

Interestingly, the bound {in Theorem \ref{thm:cdimlinear}} does not hold for $\odim(\C)$. {In fact,} Theorem \ref{thm:SDelta} below gives us a way to construct numerous examples of intersection complete codes for which $\odim(\C) \gg \min \{2d+1, n-1\}$.

\begin{definition}\label{def:SDelta}
Let $\Delta\subseteq 2^{[n]}$ be a simplicial complex. Define $\S_\Delta\subseteq 2^{[n+1]}$ to be the code \[
\S_\Delta \od \left(\Delta \ast (n+1)\right)\cup\{[n]\},
\]
where $\Delta\ast (n+1)$ denotes the cone over $\Delta$ with apex $n+1$. 
\end{definition}

\begin{theorem}\label{thm:SDelta}
Let $\Delta\subseteq 2^{[n]}$ be a simplicial complex with $m\ge 2$ facets. Then $\S_\Delta$ is an intersection complete code with $m+1$ maximal codewords, and $\odim(\S_\Delta) = m$.
\end{theorem}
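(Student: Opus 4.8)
The plan is to verify the combinatorial claims first, then split the dimension statement into an upper bound and a matching lower bound. For the combinatorial claims: $\S_\Delta = (\Delta \ast (n+1)) \cup \{[n]\}$. The cone $\Delta \ast (n+1)$ is a simplicial complex, hence intersection complete, and its maximal codewords are $F \cup \{n+1\}$ as $F$ ranges over the $m$ facets of $\Delta$. Adjoining $[n]$ adds one new maximal codeword (it is not contained in any $F\cup\{n+1\}$ since it omits $n+1$, and no $F\cup\{n+1\}$ is contained in $[n]$ since $\Delta$ has $\ge 2$ facets, so no facet is all of $[n]$). Thus $\S_\Delta$ has exactly $m+1$ maximal codewords. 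For intersection completeness, I would check that the intersection of $[n]$ with any codeword $c$ of $\Delta\ast(n+1)$ is again a codeword: $[n]\cap c = c\setminus\{n+1\}$, which is a face of $\Delta$, hence in $\Delta\ast(n+1)\subseteq\S_\Delta$. Intersections within $\Delta\ast(n+1)$ stay there. So $\S_\Delta$ is intersection complete.

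For the upper bound $\odim(\S_\Delta)\le m$: since $\S_\Delta$ is intersection complete with $m+1$ maximal codewords, the Cruz–Giusti–Itskov–Kronholm bound cited in the abstract gives $\odim(\S_\Delta)\le\max\{2,m\}$, which equals $m$ as soon as $m\ge 2$. Alternatively I would exhibit a direct realization: realize $\Delta$ as a simplicial complex sitting on the vertices of a simplex in low dimension, take $U_{n+1}$ to be a large convex open set containing all of $\Delta$'s realization, and let $U_{[n]}$-behavior be forced by a generic convex open set meeting every $U_i$ ($i\le n$) but avoiding $U_{n+1}$; arranging this is where the $m$ facets come in, because one needs a convex open region touching all of $U_1,\dots,U_n$ but disjoint from the union governing $n+1$ — in effect one point per facet, which a simplex in $\R^{m-1}$ accommodates. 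I would likely just invoke the cited theorem for brevity.

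The heart of the matter is the lower bound $\odim(\S_\Delta)\ge m$, and I expect this to be the main obstacle. The mechanism is the sunflower machinery advertised in the abstract. The $m$ sets $U_{F\cup\{n+1\}}$-type regions, together with $U_{[n]}$, should be forced into a sunflower configuration: the "petals" are the regions witnessing the $m$ facets of $\Delta$ inside $U_{n+1}$, and $U_{[n]}$ must meet each petal but the petals must remain separated outside the center $U_{n+1}$ (because $\Delta$ has no codeword containing two distinct facets, and because $[n]$ together with $n+1$ is not a codeword). The code $\S_\Delta$ is engineered so that any open convex realization contains a "$k$-flexible" sunflower with $m$ petals, and then Tverberg's theorem forces the ambient dimension to be at least $m$: if the dimension were $\le m-1$, one could find a Tverberg partition of a suitable set of $m$ points (one chosen per petal, plus points controlling the center) whose convex hulls share a common point, contradicting the required separation of petals outside the center. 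I would state and use the sunflower lower-bound lemma (the one the abstract says is proved via Tverberg) as a black box here, reducing the theorem to: (i) every open convex realization of $\S_\Delta$ yields a sunflower with $m$ petals of the requisite flexibility, and (ii) apply the lemma. Step (i) — extracting the sunflower and checking its flexibility from the combinatorics of $\S_\Delta$ — is the delicate part, since it requires translating "$[n]\in\C$ but $[n]\cup\{n+1\}\notin\C$ and facets of $\Delta$ are pairwise incomparable" into the precise separation-and-overlap conditions defining a $k$-flexible sunflower.
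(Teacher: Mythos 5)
Your combinatorial verification (intersection completeness and the count of $m+1$ maximal codewords) and your upper bound via the $\max\{2,m\}$ result of Cruz--Giusti--Itskov--Kronholm are correct and match the paper. But the lower bound, which you rightly identify as the heart of the theorem, is where the argument has a genuine gap: the one step you defer as ``delicate'' --- extracting the right sunflower from an alleged realization of $\S_\Delta$ in $\R^{m-1}$ --- is exactly the idea the proof needs, and the configuration you sketch for it is wrong. You take the petals to be the facet regions \emph{inside} $U_{n+1}$ (i.e.\ the sets $U_{F_i}\cap U_{n+1}$) with $U_{n+1}$ playing the role of the center and $U_{[n]}$ the probing set. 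But those sets are pairwise disjoint ($U_{F_i\cup F_j}\cap U_{n+1}=U_{[n]}\cap U_{n+1}=\emptyset$ since $[n+1]\notin\S_\Delta$), so they have no common center and form no sunflower at all, and moreover $U_{[n]}$ does \emph{not} meet any of them --- so neither hypothesis of the sunflower machinery engages in the configuration as you describe it.

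The correct extraction, which is the paper's key move, reverses these roles: for each facet $F_i$ set $V_i:=U_{F_i}=\bigcap_{j\in F_i}U_j$ and $V_{m+1}:=U_{n+1}$. Because the only codeword of $\S_\Delta$ containing two distinct facets is $[n]$, one gets $V_i\cap V_j=U_{[n]}$ for all $i\neq j$, so $\{V_1,\ldots,V_m\}$ is an honest ($1$-flexible) sunflower with center $U_{[n]}$; and $V_{m+1}$ meets every petal (since $F_i\cup\{n+1\}\in\S_\Delta$) but misses the center (since $[n+1]\notin\S_\Delta$). Hence $\{V_1,\ldots,V_{m+1}\}$ is an open realization of $\S_m$ in $\R^{m-1}$, contradicting $\odim(\S_m)=m$ (Theorem \ref{thm:sunflowercodeversion}, i.e.\ the ordinary Sunflower Theorem / the $k=1$, Radon case of Theorem \ref{thm:flexible}). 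Note also a quantitative point your black-box plan glosses over: with $m$ petals in $\R^{m-1}$, the $k$-flexible theorem only applies when $m\ge (m-1)k+1$, i.e.\ $k=1$; if you only knew the extracted sunflower were $k$-flexible for some $k\ge 2$, you would get merely $\odim\ge\lceil m/k\rceil$, not $m$. Establishing that the petals $V_i=U_{F_i}$ meet pairwise exactly in $U_{[n]}$ --- which is forced by the combinatorics of $\S_\Delta$ --- is precisely what makes $k=1$ and the bound sharp, and it is the step missing from your proposal.
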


A key tool in proving Theorem \ref{thm:SDelta} is an application of a ``sunflower theorem" that we proved in \cite{sunflowers}. In this paper, we will generalize this theorem to ``$k$-flexible" sunflowers of convex open sets, defined formally below. These are collections of convex open sets which have a common intersection, but {no more than $k$ of which} overlap outside of this common intersection.

\begin{definition}\label{def:flexiblesunflower}
Let $\U = \{U_1,\ldots, U_n\}$ be a collection of convex sets in $\R^d$ and let $\C = \code(\U)$. The collection $\U$ is called a \emph{$k$-flexible sunflower} if $[n]\in\C$, and all other codewords have weight at most $k$. The $U_i$ are called \emph{petals} and $U_{[n]}$ is called the \emph{center} of $\U$. \end{definition}

The following theorem tells us that if a $k$-flexible sunflower $\U$ in $\R^d$ has ``enough" petals, then sampling a point from each petal and taking the convex hull always {covers} a point in the center of $\U$. Our proof of this theorem is given in Section \ref{sec:flexible} and relies on an application of Tverberg's theorem.

\begin{theorem}\label{thm:flexible}
Let $\U = \{U_1,\ldots, U_n\}$ be an open $k$-flexible sunflower in $\R^d$. Suppose that $n\ge dk+1$, and for each $i\in[n]$ let $p_i\in U_i$. Then $\conv\{p_1,\ldots, p_n\}$ contains a point in the center of $\U$. Moreover, if $d\ge 2$ this result may fail when $n < dk+1$. 
\end{theorem}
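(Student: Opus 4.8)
The plan is to establish the two halves separately. For the positive direction, suppose $\U = \{U_1,\ldots,U_n\}$ is an open $k$-flexible sunflower in $\R^d$ with $n \ge dk+1$, and pick $p_i \in U_i$ for each $i$. I would invoke Tverberg's theorem, which guarantees that any collection of $(d+1)(r-1)+1$ points in $\R^d$ can be partitioned into $r$ parts whose convex hulls share a common point. Here I want to choose $r$ as large as possible subject to $(d+1)(r-1)+1 \le n$; since we are promised $n \ge dk+1$, a convenient choice is to take $r$ with $r-1 \ge k$ — concretely, once $n \ge dk+1 \ge (d+1)k - (k-1)$... let me instead just set $r = k+1$ and check $(d+1)k+1 \le n$ is what we'd want, but the hypothesis only gives $dk+1$. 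The cleaner route: apply Tverberg to get a partition into $r$ parts where $r$ is chosen so that each part has at least $\lceil n/r \rceil$ indices and $r > k$; one verifies from $n \ge dk+1$ that we may take $r = k+1$ after a small adjustment, or more robustly take the largest $r$ with $(d+1)(r-1) < n$, which forces $r-1 \ge \lceil (n - 1)/(d+1)\rceil$... I would pin down the exact arithmetic so that $r \ge k+1$. Granting such an $r$, Tverberg yields parts $I_1,\ldots,I_r$ and a point $q \in \bigcap_{j} \conv\{p_i : i \in I_j\}$. The key claim is then that $q$ lies in the center $U_{[n]}$: indeed $q$ determines a codeword $\sigma = \supp(q) \in \C$, and for each fixed $\ell \in [n]$, since $q \in \conv\{p_i : i \in I_j\}$ for every $j$ and $\U$ is open and convex, if $\ell \notin \sigma$ then $U_\ell$ misses each $p_i$ with $i$ in the part not containing $\ell$... the cleanest phrasing: $q$ belongs to $\conv\{p_i : i \in I_j\}$, and the points $p_i$ for $i \in I_j$ all lie in $\bigcup_{c \supseteq \text{(their fields)}}$; use convexity of each $U_m$ with $m \in \sigma$ to deduce nothing, and instead argue that if $|\sigma| \le k$ then $\sigma$ fails to meet some part $I_j$ of size $\ge r/\ldots$, contradicting $q \in \conv\{p_i : i \in I_j\}\subseteq$ (closure of) $\bigcup_{i \in I_j} U_i$ — so some $U_i$ with $i \in I_j$ yet $i \notin \sigma$ must satisfy $q \in \ol{U_i}$, which for open convex sets is still compatible; the honest fix is to note $q \in \conv\{p_i : i\in I_j\}$ forces, by openness, a point $q'$ near $q$ with $q' \in U_i$ for all $i$ in a transversal, and since $r > k$ this transversal has $> k$ elements, so $\code(\U)$ has a codeword of weight $> k$, which by $k$-flexibility must be $[n]$; hence $q$ (or a nearby point) lies in $U_{[n]}$. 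I would organize this last paragraph carefully, as it is the technical heart.

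For the negative direction, when $d \ge 2$ and $n < dk+1$, I would construct a $k$-flexible sunflower with a chosen point in each petal whose convex hull avoids the center. The natural construction generalizes the $\R^2$ line-segment example mentioned before the theorem: place the center $U_{[n]}$ as a small open ball around the origin, and arrange the petals $U_i$ as long thin "slabs" or cone-like open sets emanating from the origin in cleverly chosen directions so that no more than $k$ of them overlap away from the center, while points $p_i$ chosen far out along each petal have convex hull missing the origin-ball. The count $dk$ is exactly what one can achieve: this is essentially a general-position / fractional-Helly style bound showing that $dk$ directions in $\R^d$ can be grouped so that any point lies in at most $k$ of the associated petals, but $dk+1$ cannot. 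I would make this precise by taking $k$ generic "frames" of $d$ directions each (so $dk$ petals total), letting the $i$th petal be a thin neighborhood of a ray, chosen so that the $p_i$ all lie strictly on one side of a hyperplane through the origin — guaranteeing the origin is not in their convex hull.

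The main obstacle I anticipate is the positive direction's bookkeeping: matching Tverberg's threshold $(d+1)(r-1)+1$ against the hypothesis $n \ge dk+1$ to extract a partition into $r \ge k+1$ parts, and then rigorously passing from "$q$ is in the convex hull of one point per part" to "a nearby point lies in the intersection of more than $k$ of the $U_i$" using openness and convexity. The line-segment remark in the paper signals that openness is genuinely essential here, so the argument must use it in a way that fails for closed sets — presumably the perturbation $q \rightsquigarrow q'$ step is exactly where that happens, and I would make sure that dependence is explicit.
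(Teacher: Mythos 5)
Your negative direction is essentially the paper's own construction (Proposition \ref{prop:fails}: coordinate-direction petals over a central cube, each duplicated $k$ times), but the positive direction has a genuine gap that your own arithmetic already exposes. Applying Tverberg's theorem directly in $\R^d$ requires $(d+1)(r-1)+1$ points to get $r$ parts, so with $n=dk+1$ the largest admissible $r$ satisfies $r-1\le \lfloor dk/(d+1)\rfloor\le k-1$, i.e.\ $r\le k$; no choice of $r$ with $r\ge k+1$ can be ``pinned down,'' because the hypothesis is genuinely weaker than the Tverberg threshold in dimension $d$. The paper's proof closes this by arguing by contradiction and reducing the dimension first: if $\conv\{p_1,\ldots,p_n\}$ misses the center $U$, openness gives a hyperplane $H$ strictly separating the hull from $\ol{U}$ (missing $\partial U$ entirely), and each $p_i$ can be slid along the segment toward a fixed point of $U$ until it lies on $H$ while staying inside $U_i$, since $U\subseteq U_i$ and $U_i$ is convex. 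Tverberg is then applied \emph{inside} $H\cong\R^{d-1}$, where $dk+1$ points do split into $r=k+1$ parts. This separating-hyperplane dimension reduction is the missing idea, and it is also where the contradiction will later land.

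The second gap is the step you call the technical heart: from ``$q\in\conv\{p_i: i\in I_j\}$ for every part'' you try to produce a nearby point lying in more than $k$ of the $U_i$. That inference is false, not merely unfinished: a point in the convex hull of points drawn from several convex open sets need not lie in, or near, any of those sets — the sunflower of Proposition \ref{prop:fails} is precisely a configuration where the hull of one point per petal escapes all petals. The paper's mechanism is different. At every well supported boundary point $b$ of the center (Definition \ref{def:wellsupported}) there is a supporting halfspace $H_b^>$ containing every petal that misses $b$ (Lemma \ref{lem:goodhalfspaces}; this is where openness enters and where the argument fails for closed sets), hence containing all but at most $k$ of the $p_i$; with $k+1$ Tverberg parts, pigeonhole places some entire part, and therefore the Tverberg point, in $H_b^>$. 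Since well supported points are dense in $\partial U$ (Lemma \ref{lem:densesupport}), Lemma \ref{lem:densecut} forces the Tverberg point into $\ol{U}$, contradicting that it lies on the separating hyperplane $H$, which avoids $\ol{U}$. Without some substitute for this halfspace-and-density argument, your outline cannot be completed as written.
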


By considering a set of line segments in $\R^2$ which meet at a point, one can see that this result does not hold for closed convex sets.

\begin{example}
Consider the {open} $2$-flexible sunflower $\{U_1,U_2,U_3,U_4,U_5\}$ in $\R^2$ below. The center of this sunflower is the unit square highlighted in {dark} gray. Note that $d=2$, $k=2$, and $n=5$. Thus $n \ge dk+1$, and so Theorem \ref{thm:flexible} applies. Indeed, any choice of $p_1\in U_1,\ldots, p_5\in U_5$ has the property that $\conv\{p_1,p_2,p_3,p_4,p_5\}$ intersects the center of the sunflower. One choice of such points is shown below.

\[
\includegraphics[width=18em]{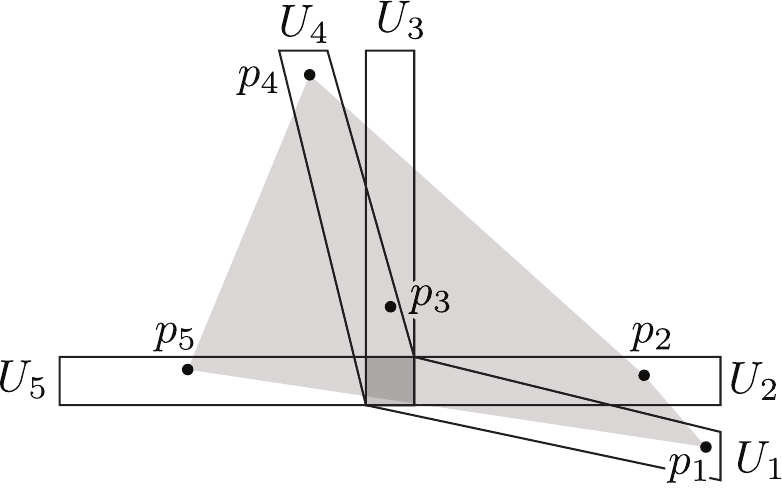}
\]

\edit{Observe} that deleting $U_5$ yields a $2$-flexible sunflower in $\R^2$ for which the conclusion of Theorem \ref{thm:flexible} does not hold: the set $\conv\{p_1, p_2, p_3,p_4\}$ does not intersect the center of $\{U_1, U_2, U_3, U_4\}$. 
\end{example}

In Section \ref{sec:background} we will recall some relevant background material. The subsequent sections are devoted to proving the theorems stated above, with one self-contained section per theorem. An exception to this is Section \ref{sec:sunflowercodeversion}, which \edit{provides important supporting results and context for} Section \ref{sec:SDelta}.

Section \ref{sec:Tn} describes a new family $\T_n$ of intersection complete codes, and initiates the study of their open embedding dimensions. The codes $\T_n$ are related to sunflowers, but the theorems that we prove regarding sunflowers are not sufficient to precisely determine $\odim(\T_n)$. 

Section \ref{sec:pcode} provides a unifying capstone to our results. \edit{We examine the families of codes from Sections \ref{sec:sunflowercodeversion}, \ref{sec:SDelta}, and \ref{sec:Tn} in the context of} a partially ordered set $\ParCode$ consisting of all neural codes, which was first introduced in \cite{morphisms}. We show that \edit{the bound} on open embedding dimension \edit{from Theorem \ref{thm:SDelta}} can be proven combinatorially using this partial order. We also generalize Definition \ref{def:SDelta}, and apply Theorem \ref{thm:flexible} to prove a generalization of Theorem \ref{thm:SDelta}, viewing these results through the lens of $\ParCode$.

\section{Background and Preliminaries}\label{sec:background}

Throughout this paper we will assume familiarity with standard concepts in topology and convex geometry; for example the interior, closure, and boundary of a set in $\R^d$, convex hulls, hyperplanes, and halfspaces \edit{(see} \cite[\edit{Chapter 1}]{matousek}\edit{)}. Recall that each hyperplane $H$ in $\R^d$ may be given an orientation, so that we can speak of the (open) halfspaces $H^>$ and $H^<$ consisting of points lying on the positive and negative sides of $H$, respectively. We will also use $H^\ge$ and $H^\le$ to denote the (closed) non-negative and non-positive respective halfspaces associated to $H$. For any convex set $U\subseteq \R^d$ and any boundary point $p$ of $U$, one can find a \emph{supporting hyperplane} through $p$: an oriented hyperplane $H$ containing $p$ with $U\subseteq H^\ge$. 

Below, we provide additional background on convex codes, simplicial complexes, and polytopes.

\subsection{Convex Codes}
In Section \ref{sec:intro} we gave a brief overview of the theory of convex neural codes. We will need one additional concept related to neural codes, described below.

\begin{definition}\label{def:trunk}
Let $\C\subseteq 2^{[n]}$ be a code, and let $\sigma\subseteq [n]$. The \emph{trunk} of $\sigma$ in $\C$ is \[
\Tk_\C(\sigma) \od \{c\in \C\mid \sigma\subseteq c\}.
\]
A subset of $\C$ is called a trunk if it is empty, or equal to $\Tk_\C(\sigma)$ for some $\sigma\subseteq [n]$. When $\sigma = \{i\}$ we will call $\Tk_\C(\sigma)$ a \emph{simple} trunk, and denote it $\Tk_\C(i)$.
\end{definition}

We introduced trunks in \cite{morphisms} and used them (and a consequent notion of morphism) to define a convenient partial order on neural codes, in which convex codes form a down-set. We will make use of this partial order to contextualize our results in Section \ref{sec:pcode}. 

It is worth briefly justifying our requirement that realizations consist of all closed or all open sets. As mentioned in Section \ref{sec:intro}, openness is a natural requirement from the perspective of neuroscience, in which receptive fields are full-dimensional and do not terminate in sharp boundaries. From a mathematical perspective, requiring closed or open sets is also natural, so that we may think of the receptive fields as a collection of closed or open sets covering some topological subspace of $\R^d$.  A further reason to place topological constraints on the sets in our realizations is the following: in \cite{allcodesconvex}, it was shown that every code has a realization consisting of convex sets (possibly neither open nor closed). Thus topological constraints are imperative to make the overall question of classifying convex codes meaningful.

\subsection{Simplicial Complexes}
For our purposes, an \emph{(abstract) simplicial complex} is just a code that is closed under taking subsets (i.e. a subset of a codeword is again a codeword). If $\Delta\subseteq 2^{[n]}$ is a simplicial complex, the maximal codewords may be called \emph{facets}, the codewords called \emph{faces}, and elements of $[n]$ called \emph{vertices}. Observe that every simplicial complex is uniquely specified by its facets together with the vertex set $[n]$. In contrast to the usual theory of simplicial complexes, we allow the case in which $i$ is a vertex but $\{i\}\notin \C$. 

The \emph{dimension} of a simplicial complex $\Delta$, denoted \edit{by} $\dim(\Delta)$, is one less than the size of the largest face in $\Delta$.
If $\Delta\subseteq 2^{[n]}$ is a simplicial complex, and $m > n$, the \emph{cone over $\Delta$ with apex $m$} is the simplicial complex \[
\Delta \ast m \od \{\sigma \subseteq [m] \mid \sigma\setminus \{m\} \in \Delta\}.
\]
That is, $\Delta\ast m$ is the simpicial complex whose facets are the facets of $\Delta$ with $m$ added to them. 
Finally, for any code $\C\subseteq 2^{[n]}$, the \emph{simplicial complex of $\C$}, denoted \edit{by} $\Delta(\C)$, is the smallest simplicial complex containing $\C$. 

\subsection{Polytopes and Polytopal Complexes}

A \emph{polytope} is the convex hull of a finite set of points in $\R^d$, or equivalently a bounded intersection of finitely many closed halfspaces. The \emph{dimension} of a polytope is the dimension of its affine hull. The \emph{(proper) faces} of a $d$-dimensional polytope in $\R^d$ are its intersections with supporting hyperplanes; faces consisting of a single point are called \emph{vertices}, and maximal faces are called \emph{facets}. We will also consider the empty set to be a proper face of any polytope $P$, and its associated supporting hyperplane to be any hyperplane that does not intersect $P$.

One can partially order the faces of a polytope by inclusion to form its \emph{face poset}. Two polytopes are called \emph{combinatorially equivalent} if their face posets are isomorphic. Every polytope $P\subseteq \R^d$ admits a \emph{dual polytope} $P^*\subseteq \R^d$, which has the property that the face poset of $P^*$ is isomorphic to the dual of the face poset of $P$ (i.e. one obtains the face poset of $P^*$ by turning the face poset of $P$ upside down). 

 A polytope is called \emph{$d$-neighborly} if the convex hull of any $d$ of its vertices is a face. Conveniently, $d$-neighborly polytopes with an arbitrarily large number of vertices can always be found in $\R^{2d}$ (e.g. the cyclic polytope; see \cite[Corollary 0.8]{ziegler-polytopesbook}). 
 
 A \emph{polytopal complex} in $\R^d$ is a finite set of polytopes $\P$ with the properties that (i) if $P\in \P$, then any face of $P$ is also in $\P$, and (ii) the intersection of two polytopes $P_1,P_2\in \P$ is a face of both $P_1$ and $P_2$. Polytopes in $\P$ are called \emph{faces}.
 
  Each polytopal complex $\mathcal P$ has a \emph{face poset}, consisting of all faces in $\P$ partially ordered by containment. Two polytopal complexes are called \emph{combinatorially equivalent} if their face posets are isomorphic. Maximal faces in $\P$ are called \emph{facets}, and if all facets have the same dimension then $\P$ is called \emph{pure}. Finally, we say that a polytopal complex $\P$ in $\R^d$ is \emph{full-dimensional} if it has a facet of dimension $d$. 
 
Given a $d$-dimensional polytope $P\subseteq \R^d$ and a facet $F$ of $P$, one can form a pure, full-dimensional polytopal complex in $\R^{d-1}$ called the \emph{Schlegel diagram of $P$ based at $F$}. Roughly, one does this by ``looking through" the facet $F$ to project all other faces of $P$ into $\R^{d-1}$. The key fact about Schlegel diagrams that we will need is the following: as a polytopal complex, the Schlegel diagram is combinatorially equivalent to the complex of all proper faces of $P$, but with $F$ removed. 
For further background on polytopes and polyhedral complexes, we refer the reader to \cite[Chapter 5]{ziegler-polytopesbook}.

\section{Closed Embedding Dimension is Bounded by Open Embedding Dimension}\label{sec:cdimleodim}

To begin our investigation, we recall a useful characterization of intersection complete codes in terms of their realizations. This fact has been observed before in various forms, for example \cite[Theorem 1.9]{signatures}. 

\begin{proposition}\label{prop:singlecover}
A code $\C\subseteq 2^{[n]}$ is intersection complete if and only if the following holds: for all $\sigma\in \Delta(\C)\setminus\C$ and all (possibly non-convex) realizations $\U = \{U_1,\ldots U_n\}$ of $\C$ there is some $i\in [n]\setminus \sigma$ with $U_\sigma\subseteq U_i$.
\end{proposition}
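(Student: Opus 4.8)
The plan is to fix $\sigma$ and translate the condition ``$U_\sigma\subseteq U_i$'' into a purely combinatorial statement about the trunk $\Tk_\C(\sigma)$ (the set of codewords containing $\sigma$), and then invoke intersection completeness — or exploit its failure — accordingly.

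\emph{Forward direction.} Assume $\C$ is intersection complete and fix $\sigma\in\Delta(\C)\setminus\C$ together with any (possibly non-convex) realization $\U=\{U_1,\dots,U_n\}$ of $\C$. Since $\sigma$ is a face of $\Delta(\C)$ it is contained in some codeword $c$, and $\A_\U^c\neq\emptyset$ forces $U_\sigma\neq\emptyset$. For a point $x$, write $c_x=\{i : x\in U_i\}$ for its codeword. The key observation is the chain of equalities
\[
\{i\in[n] : U_\sigma\subseteq U_i\}\;=\;\bigcap_{x\in U_\sigma}c_x\;=\;\bigcap\Tk_\C(\sigma),
\]
where the last step uses that as $x$ ranges over $U_\sigma$ the codewords $c_x$ range over exactly $\Tk_\C(\sigma)$: every such $c_x$ contains $\sigma$ and lies in $\C$, and conversely each $c\in\Tk_\C(\sigma)$ has a nonempty atom from which we may draw such an $x$. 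Because $\C$ is intersection complete and $\Tk_\C(\sigma)$ is a finite nonempty collection of codewords, $\bigcap\Tk_\C(\sigma)$ is itself a codeword $\tau\in\C$; since $\sigma\subseteq\tau$ but $\sigma\notin\C$ we must have $\sigma\subsetneq\tau$, and any $i\in\tau\setminus\sigma$ gives the desired $U_\sigma\subseteq U_i$.

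\emph{Converse.} I would prove the contrapositive. Suppose $\C$ is not intersection complete and choose codewords $c_1,c_2\in\C$ with $\sigma\od c_1\cap c_2\notin\C$; then $\sigma\in\Delta(\C)\setminus\C$ since $\sigma\subseteq c_1$. Build the ``point realization'' of $\C$ in $\R^1$: pick a distinct point $x_c$ for each $c\in\C$ and set $U_i=\{x_c : i\in c\}$. One checks that $\code(\U)=\C$ — the atom of $c$ is $\{x_c\}$, and $\emptyset$ is a codeword since only finitely many points are used — so this is a legitimate realization. For this $\U$, we have $U_\sigma\subseteq U_i$ iff $i$ lies in every codeword containing $\sigma$, i.e. iff $i\in\bigcap\Tk_\C(\sigma)$; but $c_1,c_2\in\Tk_\C(\sigma)$ gives $\bigcap\Tk_\C(\sigma)\subseteq c_1\cap c_2=\sigma$, hence $\bigcap\Tk_\C(\sigma)=\sigma$ and no $i\in[n]\setminus\sigma$ satisfies $U_\sigma\subseteq U_i$. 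This single $\sigma$ and realization witness the failure of the stated property, completing the contrapositive.

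\emph{Main obstacle.} The only step requiring genuine care is the identity $\{i : U_\sigma\subseteq U_i\}=\bigcap\Tk_\C(\sigma)$ — in particular verifying that \emph{every} member of $\Tk_\C(\sigma)$ is realized as the codeword of some point of $U_\sigma$, which is precisely where nonemptiness of atoms of codewords is invoked, together with the degenerate-case bookkeeping ($\sigma=\emptyset$ is automatically excluded because $\emptyset\in\C$ by convention, and $U_\sigma\neq\emptyset$ as noted). The routine upgrade from pairwise to finite intersection completeness is a straightforward induction.
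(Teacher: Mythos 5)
Your proof is correct, and the forward direction is essentially the paper's argument: both reduce to the observation that $\bigcap\Tk_\C(\sigma)$ is a codeword properly containing $\sigma$ (the paper calls it $c_0$), and that any $i\in c_0\setminus\sigma$ then satisfies $U_\sigma\subseteq U_i$ because every point of $U_\sigma$ has a codeword containing $\sigma$, hence containing $i$; your identity $\{i : U_\sigma\subseteq U_i\}=\bigcap\Tk_\C(\sigma)$ is just a tidier packaging of this, and you correctly note the needed facts (nonemptiness of the trunk, nonemptiness of atoms, the finite-intersection upgrade). The one place you genuinely diverge is the converse: the paper takes an \emph{arbitrary} realization and, for each $i\in[n]\setminus\sigma$, finds a point of $U_\sigma$ outside $U_i$ by using a point in the atom of whichever of $c_1,c_2$ omits $i$ (so the failure occurs in \emph{every} realization), whereas you construct one explicit ``point realization'' in $\R^1$ and verify combinatorially that $\bigcap\Tk_\C(\sigma)=\sigma$ there. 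Both are valid witnesses for the negated universally-quantified condition; the paper's version yields the slightly stronger realization-independent statement, while yours has the small advantage of making explicit that a (possibly non-convex) realization of $\C$ exists at all, a point the paper's ``choose any realization'' quietly presupposes.
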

\begin{proof}
First suppose that $\C$ is intersection complete, and has a realization $\U = \{U_1,\ldots,U_n\}$. Let $\sigma\in \Delta(\C)\setminus \C$ and define $c_0 = \bigcap_{c\in\Tk_\C(\sigma)} c$. The trunk $\Tk_\C(\sigma)$ is nonempty since $\sigma\in \Delta(\C)$, and $c_0\in \C$ since $\C$ is intersection complete. \edit{By construction, $\sigma$ is a subset of $c_0$. In fact,} $\sigma$ is a proper subset of $c_0$ since $\sigma\notin\C$. Thus we may choose $i\in c_0\setminus \sigma$. 

We claim that $U_\sigma\subseteq U_i$. Indeed, since $c_0$ is the unique minimal element of $\Tk_\C(\sigma)$, every codeword containing $\sigma$ also contains $i$. This implies that $U_\sigma\subseteq U_i$.

For the converse, we prove the contrapositive. Suppose that $\C$ is not intersection complete, so there exist $c_1$ and $c_2$ in $\C$ such that $c_1\cap c_2\notin \C$. Define $\sigma = c_1\cap c_2$ and note that $\sigma\in \Delta(\C)\setminus \C$.  Then choose any (possibly non-convex) realization $\U = \{U_1,\ldots, U_n\}$ of $\C$, and let $i\in [n]\setminus \sigma$. Observe that $i$ is contained in at most one of $c_1$ and $c_2$. Since $U_\sigma$ contains $U_{c_1}$ and $U_{c_2}$, it follows that there is a point in $U_\sigma$ that is not contained in $U_i$. This proves the result.  \end{proof}

In addition to Proposition \ref{prop:singlecover}, we will need the following ``trimming" operation, which was also employed in \cite{sparse}. 

\begin{definition}\label{def:trim}
Let $U\subseteq \R^d$ be any set and $\varepsilon>0$. The \emph{trim} of $U$ by $\varepsilon$ is the set \[
\trim(U,\varepsilon) \od \{p\in U\mid B_\varepsilon(p)\subseteq U\},
\]
where $B_\varepsilon(p)$ is the closed ball of radius $\varepsilon$ centered at $p$.
\end{definition}

\begin{proposition}\label{prop:trimconvex}
If $U\subseteq \R^d$ is convex and open, then $\trim(U,\varepsilon)$ is convex and open for any $\varepsilon >0$. Moreover, $\cl(\trim(U,\varepsilon))\subseteq U$. 
\end{proposition}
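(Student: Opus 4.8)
The plan is to verify each assertion directly from the definition of $\trim(U,\varepsilon)$. For convexity, suppose $p, q \in \trim(U,\varepsilon)$, so $B_\varepsilon(p) \subseteq U$ and $B_\varepsilon(q) \subseteq U$. For any $\lambda \in [0,1]$, I want to show $B_\varepsilon(\lambda p + (1-\lambda)q) \subseteq U$. The key observation is that $B_\varepsilon(\lambda p + (1-\lambda) q) = \lambda B_\varepsilon(p) + (1-\lambda) B_\varepsilon(q)$ as Minkowski combinations (a ball of radius $\varepsilon$ is just a point plus $B_\varepsilon(0)$, and $\lambda B_\varepsilon(0) + (1-\lambda)B_\varepsilon(0) = B_\varepsilon(0)$ since $B_\varepsilon(0)$ is convex and $\lambda + (1-\lambda) = 1$). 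Since $U$ is convex, $\lambda B_\varepsilon(p) + (1-\lambda) B_\varepsilon(q) \subseteq U$, giving $\lambda p + (1-\lambda)q \in \trim(U,\varepsilon)$.

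For openness, let $p \in \trim(U,\varepsilon)$. The idea is that the condition $B_\varepsilon(p) \subseteq U$ is stable under small perturbations of $p$ because $U$ is open: the compact set $B_\varepsilon(p)$ lies inside the open set $U$, so it has positive distance $\delta$ to the complement of $U$ (or $\delta = \infty$ if $U = \R^d$). Then for any $q$ with $\|q - p\| < \delta$, every point of $B_\varepsilon(q)$ is within $\delta$ of some point of $B_\varepsilon(p)$, hence still in $U$; so the open ball of radius $\delta$ about $p$ lies in $\trim(U,\varepsilon)$. Alternatively, and perhaps more cleanly, one can observe $\trim(U,\varepsilon) = \bigcap_{\|v\|\le \varepsilon}(U - v)$, an intersection of open sets — but that intersection is infinite, so I would prefer the distance argument, or note $\trim(U,\varepsilon) = \{p : d(p, U^c) \ge \varepsilon\}$ when $U \ne \R^d$, i.e. the $\varepsilon$-superlevel set of the continuous function $d(\cdot, U^c)$, wait — that gives a closed set, so I must be careful: since $U$ is open, $d(p, U^c) \ge \varepsilon$ is equivalent to the closed ball $B_\varepsilon(p)$ avoiding $U^c$, which is what we want, and this set is closed, not open. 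So the distance-function phrasing actually does the last part of the proposition for me, and I should handle openness separately via the perturbation argument above, taking care that when $B_\varepsilon(p) \subseteq U$ with $U$ open and $U \ne \R^d$, the distance from $B_\varepsilon(p)$ to $U^c$ is strictly positive (compactness of $B_\varepsilon(p)$, closedness of $U^c$, disjointness).

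For the final claim $\cl(\trim(U,\varepsilon)) \subseteq U$: if $U = \R^d$ this is immediate. Otherwise, let $q \in \cl(\trim(U,\varepsilon))$ and take $p_j \to q$ with $p_j \in \trim(U,\varepsilon)$, so $d(p_j, U^c) \ge \varepsilon$ for all $j$. Since $d(\cdot, U^c)$ is continuous (indeed $1$-Lipschitz), $d(q, U^c) \ge \varepsilon > 0$, so $q \notin U^c$, i.e. $q \in U$. The main thing to be careful about throughout is the degenerate case $U = \R^d$ (where $U^c = \emptyset$ and distances to it are vacuously infinite) and the possibility that $\trim(U,\varepsilon)$ is empty, in which case all three statements hold trivially. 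I do not anticipate a genuine obstacle here; the only subtlety is keeping the openness argument and the closure argument from being conflated, since the natural "distance to $U^c$" description produces a closed set and must be supplemented for the openness claim.
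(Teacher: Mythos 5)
Your proposal is correct and follows essentially the same route as the paper's proof: convexity via Minkowski sums inside the convex set $U$ (the paper uses $\overline{pq}+B_\varepsilon(0)$, you use $\lambda B_\varepsilon(p)+(1-\lambda)B_\varepsilon(q)$), openness via the positive distance from the compact ball $B_\varepsilon(p)$ to the complement of $U$, and the closure statement by an elementary limiting/boundary argument. The only quibble, which you correctly avoid relying on, is that for open $U\neq\R^d$ one actually has $\trim(U,\varepsilon)=\{p : d(p,U^c)>\varepsilon\}$ rather than the $\ge\varepsilon$ superlevel set, since $U^c$ is closed and the infimum is attained; your closure argument only uses the true implication $p\in\trim(U,\varepsilon)\Rightarrow d(p,U^c)\ge\varepsilon$, so nothing breaks.
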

\begin{proof}
Let $p\in \trim(U,\varepsilon)$. Since $B_\varepsilon(p)$ is a closed subset of $U$ and $U$ is open and convex, there exists $\delta > 0$ such that $B_{\varepsilon+\delta}(p) \subseteq U$. This implies that the open ball of radius $\delta$ centered at $p$ is contained in $\trim(U,\varepsilon)$. Thus $p$ is an interior point of $\trim(U, \varepsilon)$, so $\trim(U,\varepsilon)$ is open.

Next let $p$ and $q$ be points in $\trim(U,\varepsilon)$. By convexity of $U$, the Minkowski sum $C = \overline{pq} + B_\varepsilon(0)$ is contained in $U$. For any $r$ on $\overline{pq}$, this implies that $B_\varepsilon(r) \subseteq C\subseteq U$. Thus $r$ lies in $\trim(U,\varepsilon)$, proving that $\trim(U,\varepsilon)$ is convex.

For the final statement, observe that no boundary point of $U$ is a boundary point of $\trim(U,\varepsilon)$. Thus all boundary points of $\trim(U,\varepsilon)$ lie in $U$, and \edit{so} the closure  $\cl(\trim(U,\varepsilon))$ must be a subset of $U$. 
\end{proof}

\begin{proposition}\label{prop:trimcommutes}
Let $U$ and $V$ be sets in $\R^d$. Then $\trim(U\cap V,\varepsilon) = \trim(U,\varepsilon)\cap \trim(V,\varepsilon)$. If $U\subseteq V$, then $\trim(U,\varepsilon)\subseteq \trim(V,\varepsilon)$. 
\end{proposition}
\begin{proof}
The first statement follows from the fact that $B_\varepsilon(p)$ is contained in both $U$ and $V$ if and only if it is contained in their intersection. The second statement is immediate from Definition \ref{def:trim}.
\end{proof}

A notion of non-degeneracy for realizations was introduced in \cite{openclosed}. Intuitively, non-degeneracy requires that the different regions in the realization do not get too close to one another, unless they intersect. The formal definition is given below.

\begin{definition}[\cite{openclosed}]\label{def:nondegen}
A collection $\U = \{U_1,\ldots, U_n\}$ of convex sets in $\R^d$ is called \emph{non-degenerate} if the following two conditions hold:\begin{itemize}
\item[(i)] For all $\sigma\in \code(\U)$, the atom $\A_\U^\sigma$ is top dimensional (i.e. its intersection with any open set is either empty, or has nonempty interior).
\item[(ii)] For all nonempty $\sigma\subseteq [n]$, we have $\bigcap_{i\in\sigma} \partial U_i \subseteq \partial U_\sigma$.
\end{itemize}
\end{definition}

When $\U$ is a collection of convex open sets, \cite{openclosed} proved that (ii) implies (i). We will show that trimming a \edit{convex} open realization of an intersection complete code $\C$ by a sufficiently small $\varepsilon$ yields a non-degenerate realization of $\C$.

\begin{lemma}\label{lem:trimrealization}
Let $\C\subseteq 2^{[n]}$ be an intersection complete code, and let $\U = \{U_1,\ldots, U_n\}$ be a convex open realization of $\C$. Then there exists $\varepsilon >0$ such that the sets $V_i = \trim(U_i,\varepsilon)$ form a non-degenerate convex open realization of $\C$.\end{lemma}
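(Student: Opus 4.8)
\emph{Proof proposal.}
The plan is to show that for every sufficiently small $\varepsilon > 0$ the collection $\V = \{V_1,\ldots,V_n\}$ with $V_i = \trim(U_i,\varepsilon)$ does the job. Convexity and openness of each $V_i$ are immediate from Proposition \ref{prop:trimconvex}, and Proposition \ref{prop:trimcommutes} gives $V_\sigma = \trim(U_\sigma,\varepsilon)$ for every $\sigma\subseteq[n]$, so trimming respects the atom structure. The two things to establish are (a) $\code(\V) = \C$, and (b) non-degeneracy of $\V$.

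For (a), I would first observe that $\code(\V)\subseteq\C$ for \emph{every} $\varepsilon>0$: if $\sigma\notin\Delta(\C)$ then $U_\sigma=\emptyset$, so $V_\sigma=\emptyset$; and if $\sigma\in\Delta(\C)\setminus\C$, then Proposition \ref{prop:singlecover} applied to $\U$ gives some $i\in[n]\setminus\sigma$ with $U_\sigma\subseteq U_i$, whence $V_\sigma\subseteq V_i$ by Proposition \ref{prop:trimcommutes} and so $\A_\V^\sigma=\emptyset$. The reverse inclusion forces $\varepsilon$ to be small: for each nonempty $c\in\C$ fix $p_c\in\A_\U^c$; since the finitely many sets $U_i$ with $i\in c$ are open and contain $p_c$, there is $\varepsilon_c>0$ with $B_{\varepsilon_c}(p_c)\subseteq U_i$ for all $i\in c$, so that $p_c\in\A_\V^c$ as soon as $\varepsilon\le\varepsilon_c$ (note $p_c\notin U_j\supseteq V_j$ for $j\notin c$). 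It will also be convenient to pick, for each $\sigma\in\Delta(\C)$, a ball $B_{r_\sigma}(q_\sigma)\subseteq U_\sigma$, so that $q_\sigma\in V_\sigma$ once $\varepsilon<r_\sigma$. Choosing $\varepsilon$ below all of these finitely many thresholds yields $\code(\V)=\C$ together with the bonus fact that $V_\sigma\neq\emptyset$ whenever $\sigma\in\Delta(\C)$.

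For (b), since $\V$ consists of convex open sets it suffices, by \cite{openclosed}, to verify condition (ii) of Definition \ref{def:nondegen}: $\bigcap_{i\in\sigma}\partial V_i\subseteq\partial V_\sigma$ for every nonempty $\sigma$. So fix $x\in\bigcap_{i\in\sigma}\partial V_i$. Since $\cl(V_i)\subseteq U_i$ by Proposition \ref{prop:trimconvex}, we get $x\in U_\sigma$; in particular $\sigma\in\Delta(\C)$, so $V_\sigma\neq\emptyset$ by the choice of $\varepsilon$, and also $x\notin V_i$ (as $V_i$ is open), so $x\notin V_\sigma$. It remains only to show $x\in\cl(V_\sigma)$, for then $x\in\cl(V_\sigma)\setminus V_\sigma=\partial V_\sigma$. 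I would do this by picking any $q\in V_\sigma$ and pushing $x$ slightly toward $q$: set $q_t=(1-t)x+tq$ and show $q_t\in V_\sigma$ for all small $t\in(0,1)$, then let $t\to 0^+$. The hard part is this last claim, and it is where convexity and the precise geometry of trimming enter: lying on $\partial V_i=\partial\,\trim(U_i,\varepsilon)$ forces $x$ to be at distance at least $\varepsilon$ from the complement of $U_i$ (so $B_r(x)\subseteq U_i$ for all $r<\varepsilon$), while $q\in V_i$ gives $B_{s}(q)\subseteq U_i$ for some $s>\varepsilon$; then for $r$ close to $\varepsilon$ the Minkowski sum identity $(1-t)B_r(x)+tB_{s}(q)=B_{(1-t)r+ts}(q_t)$ together with convexity of $U_i$ shows $B_\varepsilon(q_t)\subseteq U_i$, i.e. $q_t\in\trim(U_i,\varepsilon)$. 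Running this for each $i\in\sigma$ gives $q_t\in V_\sigma$, which completes the verification of (ii), and hence the proof.
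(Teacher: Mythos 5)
Your proposal is correct and follows essentially the same route as the paper's proof: choose $\varepsilon$ via points in the atoms, use Proposition \ref{prop:singlecover} together with Proposition \ref{prop:trimcommutes} for the inclusion $\code(\V)\subseteq\C$, and verify only condition (ii) of Definition \ref{def:nondegen} by showing a common boundary point of the $V_i$ lies in $\cl(V_\sigma)\setminus V_\sigma$ via a segment to a point of $V_\sigma$. The only cosmetic difference is that you establish the segment claim by an explicit Minkowski-sum computation with the trim radii, where the paper simply invokes the standard fact that the segment from a boundary point of an open convex set to an interior point lies in the set except at that boundary point.
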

\begin{proof}
For each codeword $c\in\C$, choose a point $p_c\in \A_\U^c$. Observe that we may choose $\varepsilon$ small enough that $B_\varepsilon(p_c)\subseteq U_c$ for all $c\in\C$. We claim that this suffices.
Note that by choice of $\varepsilon$, $p_c\in V_c$ for all $c\in\C$. In particular, if $U_\sigma$ is nonempty then so is $V_\sigma$.

To prove that $\C = \code(\{V_1,\ldots, V_n\})$, we must show for all nonempty $\sigma\subseteq [n]$ that $U_\sigma$ is covered by $\{U_i\mid i\in[n]\setminus \sigma\}$ if and only if $V_\sigma$ is covered by $\{V_i\mid i\in [n]\setminus \sigma\}$. Suppose first that $U_\sigma$ is covered by $\{U_i\mid i\in[n]\setminus \sigma\}$. \edit{Since $\C$ is intersection complete,} Proposition \ref{prop:singlecover} \edit{implies that} there exists some $i\in[n]\setminus \sigma$ with $U_\sigma\subseteq U_i$. But \edit{by Proposition \ref{prop:trimcommutes}} trimming commutes with intersections and preserves containment, \edit{and so} $V_\sigma\subseteq V_i$ as desired.

For the converse, we prove the contrapositive. Suppose that $U_\sigma$ is not covered by $\{U_i\mid i\in[n]\setminus \sigma\}$. Then $\sigma\in \C$ and we may consider the point $p_\sigma$. By choice of $\varepsilon$, $p_\sigma$ is in $V_\sigma$ but not any $U_i$ with $i\in[n]\setminus \sigma$. Since $V_i\subseteq U_i$, this implies that $p_\sigma$ is not covered by $\{V_i\mid i\in[n]\setminus \sigma\}$. This proves that $\C = \code(\{V_1,\ldots, V_n\})$. 

To see that the $V_i$ form a non-degenerate realization, we must check (ii) of Definition \ref{def:nondegen}. For any nonempty $\sigma\subseteq [n]$, let $p$ be a point in $\bigcap_{i\in\sigma} \partial V_i$. Observe that since the closure of any $V_i$ is contained in $U_i$, the point $p$ lies in $U_\sigma$. \edit{Hence} $U_\sigma$ is nonempty. We may choose a point $q\in V_\sigma$, and consider the line segment $\overline{pq}$. Since $p$ is a boundary point of all $V_i$ with $i\in\sigma$, the line segment $\overline{pq}$ is contained in $V_i$ except for the point $p$. But this implies that all points on the line segment except $p$ lie in $V_\sigma$. Thus $p$ is a boundary point of $V_\sigma$ and the result follows.\end{proof}

\edit{\begin{remark}
Note that we only used intersection completeness of $\C$ once in the proof of Lemma \ref{lem:trimrealization}, namely in the second paragraph so that we could apply Proposition \ref{prop:singlecover}. The third paragraph of the proof shows that, independent of intersection completeness, the codewords of $\C$ all appear as codewords in the trimmed realization. Likewise, the fourth paragraph proves non-degeneracy of the trimmed realization without using intersection completeness. Thus if $\C$ is \emph{any} code with a convex open realization $\U$, then we may trim $\U$ by some small $\varepsilon >0$  to obtain a non-degenerate realization of a code $\D$ with $\C\subseteq \D \subseteq \Delta(\C)$. 
\end{remark}}

\begin{example}\label{ex:trim}
Below we show the construction used in \edit{the proof of} Lemma \ref{lem:trimrealization} (and \edit{in turn in the proof of} Theorem \ref{thm:cdimleodim}) for two realizations of intersection complete codes. 

The first is the code $\{\mathbf{123}, 1,2,3,\emptyset\}$. In \edit{the figure below} the $U_i$ already formed a non-degenerate \edit{open} realization, but trimming them slightly does not \edit{change the realized code}.
\[
\includegraphics[width=25em]{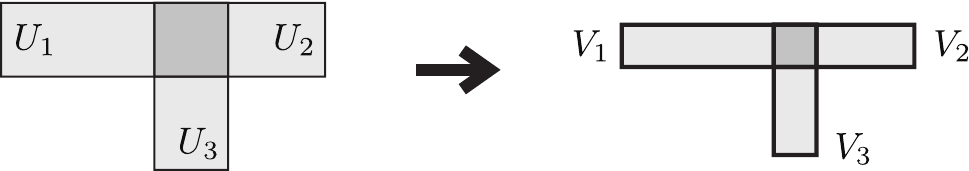}
\]
The figure below shows a degenerate \edit{open} realization of the code $\{\mathbf{13},\mathbf{14},\mathbf{23},\mathbf{24}, 1,2,3,4,\emptyset\}$. This realization is degenerate since $U_1$ and $U_2$ are disjoint but share boundary points, and similarly for $U_3$ and $U_4$. On the lefthand side, we have labeled the regions corresponding to maximal codewords. \edit{On the right we have labeled each of the sets in the trimmed realization.} 
\[
\includegraphics[width=21em]{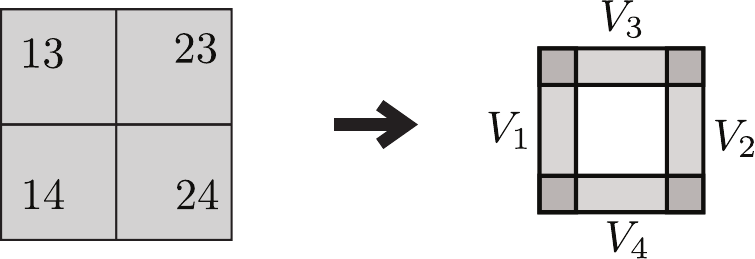}
\]
\end{example}

The importance of non-degeneracy is the following: when $\U$ is a non-degenerate collection of convex open sets, taking the closures of these sets does not change the code of the collection (see \cite[Theorem 2.12]{openclosed}). With this, we are ready to prove Theorem \ref{thm:cdimleodim}.

\begin{restatetheorem}{\ref{thm:cdimleodim}}
Let $\C\subseteq 2^{[n]}$ be an intersection complete code. Then $\cdim(\C)\le \odim(\C)$.
\end{restatetheorem}
\begin{proof}
Let $\U = \{U_1,\ldots, U_n\}$ be an open realization of $\C$ in $\R^{\odim(\C)}$. By Lemma \ref{lem:trimrealization}, we may assume that $\U$ is non-degenerate by possibly trimming the sets in the realization. By \cite[Theorem 2.12]{openclosed}, the realization consisting of closures of the $U_i$ is a closed convex realization of $\C$. Thus $\cdim(\C)\le \odim(\C)$. 
\end{proof}

\begin{example}\label{ex:trimfails}
Trimming a\edit{n open} realization may fail when a code is not intersection complete. The following shows a\edit{n open convex} realization of the code $\{\mathbf{123}, 12, 13, \emptyset\}$ with labeled atoms, and a trimming of that realization. One can observe that no matter how small we choose $\varepsilon$, trimming this realization always yields an arrangement in which part of $V_1$ is not covered by $V_2$ and $V_3$. 
\[
\includegraphics[width=30em]{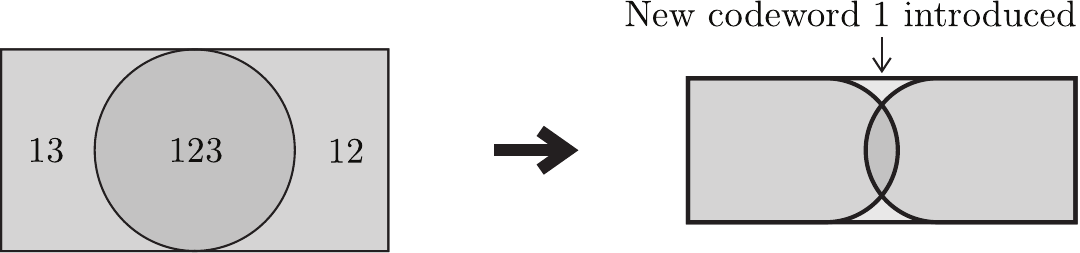}
\]
Of course, we could have drawn a better realization of this code. However, we are not always so lucky. There are examples of open convex codes where trimming will fail for \emph{any} convex open realization---see for example \cite[Section 2.3]{openclosed}, which describes a code for which every convex open realization is degenerate.
\end{example}

\section{Closed Embedding Dimension is Bounded by $\min\{2d+1, n-1\}$}\label{sec:cdimlinear}

Throughout this section, let us fix a (possibly not intersection complete) code $\C\subseteq 2^{[n]}$, and let $d = \dim(\Delta(\C))$. \edit{Let us also assume that $n\ge 2$. This avoids the possibility that $\C = \{\mathbf{1}, \emptyset\}$ (this code is intersection complete, but does not satisfy Theorem \ref{thm:cdimlinear} because its closed embedding dimension is 1, while $\min\{2d+1, n-1\} = 0$).}

We will attempt to build a realization of $\C$ using closed convex sets satisfying the bound of Theorem \ref{thm:cdimlinear}. As we will prove in Lemma \ref{lem:itworks}, this construction will succeed if and only if $\C$ is intersection complete. This result echoes \cite[Lemma 5.9]{openclosed}, but our approach allows us stronger control over the dimension of the ambient space. Our approach is inspired by the construction described in \cite[Theorem 3.1]{tancer}.

Throughout this section we will refer to the \emph{intersection completion} of $\C$, which is the code containing all intersections of codewords in $\C$. Note that $\C$ is intersection complete if and only if it is equal to its intersection completion. To begin building our attempted realization, we need to introduce several \edit{combinatorial} objects. 
\begin{lemma}\label{lem:polycomplex}
Let $m = \min \{2d+1, n-1\}$. There exists a pure, full-dimensional polytopal complex $\P$ in $\R^m$ with \edit{$n$} facets $\{P_1,\ldots, P_n\}$ \edit{and the following property: each choice of} $d+1$ \edit{distinct} facets of $\P$ \edit{determines} a unique nonempty face of $\P$ \edit{by taking the intersection of the chosen facets}. In particular, $\code(\{P_1,\ldots, P_n\})$  contains all $\sigma\subseteq [n]$ with $|\sigma| \le d+1$.
\end{lemma}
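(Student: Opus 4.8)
\textbf{Proof proposal for Lemma \ref{lem:polycomplex}.}

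The plan is to produce $\P$ as the Schlegel diagram of a suitably chosen polytope. First I would pick a $(d+1)$-neighborly polytope $Q$ in $\R^{2d+2}$ with $n+1$ vertices, which exists by the discussion in Section \ref{sec:background} (e.g.\ a cyclic polytope). Then I pass to the dual polytope $Q^*\subseteq \R^{2d+2}$, whose face poset is the face poset of $Q$ turned upside down; thus $Q^*$ has $n+1$ facets, and neighborliness of $Q$ translates into the statement that any $d+1$ of those facets of $Q^*$ meet in a (nonempty) face of $Q^*$, namely the face dual to the corresponding face of $Q$. Now I take the Schlegel diagram of $Q^*$ based at one of its facets $F$: this is a pure, full-dimensional polytopal complex in $\R^{2d+1}$ whose face poset is combinatorially equivalent to the poset of proper faces of $Q^*$ with $F$ removed. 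In particular its facets are the $n$ remaining facets $P_1,\dots,P_n$ of $Q^*$ (other than $F$), and since any $d+1$ of the $n+1$ facets of $Q^*$ met in a nonempty face, certainly any $d+1$ of $P_1,\dots,P_n$ meet in a nonempty face of the Schlegel diagram.

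For the bound $m = \min\{2d+1, n-1\}$ rather than just $2d+1$, I would observe that if $n-1 < 2d+1$, i.e.\ $n \le 2d+1$, one can work with a lower-dimensional simplex: take $Q^* $ to be an $(n-1)$-simplex in $\R^{n-1}$, which has $n$ facets, any $d+1 \le n-1$ of which meet in a face of dimension $n-1-(d+1)\ge 0$, hence nonempty; I can even realize the required complex directly as (a Schlegel diagram of) this simplex, or just as the boundary complex of an $n$-simplex with one facet removed, giving a polytopal complex in $\R^{n-1}$. Alternatively, and perhaps more cleanly, I would phrase the whole construction uniformly: let $N = \max\{2d+2, n\}$, choose a $(d+1)$-neighborly polytope on $N$ vertices in $\R^{m}$ — for $N = n$ this is the simplex in $\R^{n-1}$, for $N = 2d+2$ this is the cyclic polytope in $\R^{2d+1}$ — dualize, and take a Schlegel diagram based at a facet, discarding excess facets if $N > n$... actually I would simply handle the two cases $n-1 \le 2d+1$ and $n-1 > 2d+1$ separately, as above, since the arithmetic is different in each.

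Finally, the last sentence of the lemma is immediate: if $\sigma\subseteq[n]$ has $|\sigma|\le d+1$, then the facets $\{P_i : i\in\sigma\}$ meet in a nonempty face $G$ of $\P$; picking a point $x$ in the relative interior of $G$, the point $x$ lies in $P_i$ for $i\in\sigma$, and for $j\notin\sigma$ either $x\notin P_j$ or $x\in P_j$ — but taking $x$ in the relative interior of the \emph{unique minimal} face of $\P$ containing it, we get $x\in P_j$ iff $j$ is among the indices of facets through $G$; enlarging $\sigma$ to that full index set still has size $\le \dim\P +1$ but more to the point $\sigma$ itself is a codeword because we may choose $x$ generically in $G$ so that it avoids all proper subfaces, i.e.\ lies in exactly the facets containing $G$. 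I expect the main obstacle to be bookkeeping around this last point — ensuring $\sigma$ itself (and not merely some superset) appears in $\code(\{P_i\})$ — which really just amounts to the standard fact that relative interiors of faces of a polytopal complex partition its support and that a point in $\relinterior(G)$ lies in exactly those facets containing $G$; and secondarily, being careful that the ``any $d+1$ facets meet in a \emph{unique} nonempty face'' clause is inherited correctly through dualization and the Schlegel diagram (uniqueness follows because in a polytopal complex the intersection of facets is a single well-defined face).
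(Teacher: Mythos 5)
Your main construction is exactly the paper's: take a $(d+1)$-neighborly polytope with $n+1$ vertices, dualize, and pass to the Schlegel diagram based at one facet; for $m=2d+1$ this is correct, and your treatment of the ``in particular'' clause is at the same level of rigor as the paper's (both ultimately rest on the fact that for $|\sigma|\le d+1$ the face $\bigcap_{i\in\sigma}P_i$ is not contained in any $P_j$ with $j\notin\sigma$, which follows from neighborliness of the primal). The genuine gap is your handling of the case $m=n-1<2d+1$, which cannot be avoided since the cyclic polytope in $\R^{2d+2}$ needs at least $2d+3$ vertices. Your replacement --- the $n$ facets of an $(n-1)$-simplex in $\R^{n-1}$ --- fails the lemma on two counts. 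First, those facets are only $(n-2)$-dimensional, so the complex is not full-dimensional in $\R^{n-1}$; full-dimensionality is part of the statement and is used downstream (e.g.\ in Lemma \ref{lem:Hsupport}, where one needs hyperplanes supporting the full-dimensional $P_i$ exactly along $P_\sigma$). Second, and more seriously, your inequality $d+1\le n-1$ need not hold: $d=\dim(\Delta(\C))$ can equal $n-1$ (for instance when $[n]$ is a codeword), and then the lemma requires all $n$ facets to meet in a nonempty face, whereas the intersection of all $n$ facets of an $(n-1)$-simplex is empty. Your fallback suggestions do not repair this: the boundary complex of an $n$-simplex minus a facet lives in $\R^n$, not $\R^{n-1}$, and the ``uniform'' variant asks for a $(d+1)$-neighborly polytope in $\R^m$ with $m\le 2d+1$ (impossible unless it is a simplex) and in the case $N=n$ yields only $n-1$ facets after the Schlegel step.

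The fix, which is what the paper does and which you brush against in your parenthetical about Schlegel diagrams, is to run the same two-step recipe in this case as well, one dimension up: the $n$-simplex in $\R^{n}=\R^{m+1}$ is trivially $(d+1)$-neighborly with $n+1$ vertices and is combinatorially self-dual, so taking the Schlegel diagram of the ($n$-dimensional) simplex based at one facet produces a pure, full-dimensional complex in $\R^{n-1}$ with $n$ facets, any $k\le n$ of which meet in a nonempty face (indeed all $n$ of them share the image of the vertex opposite the removed facet). With that substitution in the case $n-1\le 2d+1$, your argument matches the paper's and goes through.
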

\begin{proof}
First, recall that there exists a \edit{full-dimensional} $(d+1)$-neighborly polytope in $\R^{m+1}$ with $n+1$ vertices. When $m=2d+1$, one example is the cyclic polytope, and when $m=n-1$ the  $n$-simplex suffices. Let $P\subseteq \R^{m+1}$ be a polytope dual to a $(d+1)$-neighborly polytope with $n+1$ vertices. Let $F_1,\ldots, F_n, F_{n+1}$ be the facets of $P$, and observe \edit{by neighborliness of the dual} that any $d+1$ facets of $P$ meet in a unique \edit{nonempty} face of $P$ \edit{(nonemptiness follows from the fact that in the neighborly polytope dual to $P$, any $d+1$ or fewer vertices are the vertices of a face which is not the whole polytope)}. Consider the Schlegel diagram of $P$ in $\R^m$ based at the facet $F_{n+1}$. For $1\le i\le n$, define $P_i$ to be the image of $F_i$ in the Schlegel diagram. We claim that the complex $\P$ with facets $\{P_1,\ldots, P_n\}$ is the desired polytopal complex.

Each $P_i$ is full-dimensional since each $F_i$ has dimension $m$. \edit{Hence $\P$ is pure and full-dimensional.} Furthermore, \edit{for any nonempty} $\sigma\subseteq [n]$ \edit{with} $|\sigma| \le d+1$, the facets $P_i$ with $i\in\sigma$ meet at a unique \edit{nonempty} face \edit{of $\P$ (since the Schlegel diagram preserves intersections and nonemptiness).} A point in the relative interior of this face will not lie in any $P_j$ with $j\notin\sigma$, and so $\sigma\in \code(\P)$. This proves the result.
\end{proof}
For the remainder of this section, let us fix a polytopal complex $\P$ with facets $\{P_1,\ldots, P_n\}$ as given by Lemma \ref{lem:polycomplex}. So far we have a fixed code $\C$, and a fixed complex $\P$. We begin to relate these two objects to \edit{each other} below.  \edit{For $\sigma\subseteq[n]$, recall that $P_\sigma$ denotes $\bigcap_{i\in\sigma} P_i$.}
\begin{itemize}
\item \edit{For each nonempty $\sigma\subseteq [n]$ with $|\sigma|\le d+1$, fix a point $p_\sigma$ in the relative interior of $P_\sigma$, and}
\item \edit{For each $i\in[n]$, define $V_i \od \conv\{p_c\mid c\in \Tk_\C(i)\}$. }
\end{itemize}

These objects are illustrated in Example \ref{ex:2d+1} below. \edit{For now, we observe two facts that we will make use of in the lemmas below:}\begin{enumerate}
\item Since the various $P_\sigma$ with $|\sigma|\le d+1$ are distinct faces of $\P$, $p_\sigma \in P_i$ if and only if $i\in\sigma$, \edit{and}
\item \edit{For all $i\in[n]$}, $V_i\subseteq P_i$, and as a consequence $V_{\tau}\subseteq P_{\tau}$ for all nonempty $\tau\subseteq [n]$.
\end{enumerate}
The following lemmas build the connection between the sets $V_i$ and the structure of our \edit{fixed} code $\C$.

\begin{lemma}\label{lem:Hsupport}
Let $\sigma\subseteq [n]$ with $|\sigma|\ge 2$, and let $i\in\sigma$. Let $H$ be a supporting hyperplane for the proper face $P_\sigma$ of $P_i$. Then $V_i\cap H  = \conv\{p_c\mid c\in\Tk_\C(\sigma)\}$.
\end{lemma}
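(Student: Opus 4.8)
The plan is to prove the two inclusions $V_i \cap H \supseteq \conv\{p_c \mid c \in \Tk_\C(\sigma)\}$ and $V_i \cap H \subseteq \conv\{p_c \mid c \in \Tk_\C(\sigma)\}$ separately, using the key geometric fact that the points $p_c$ defining $V_i$ are split cleanly by $H$ according to whether $\sigma \subseteq c$.

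First I would record the orientation convention: since $H$ is a supporting hyperplane for the face $P_\sigma$ of $P_i$, we may orient $H$ so that $P_i \subseteq H^\ge$ and $P_i \cap H = P_\sigma$. Now recall from Definition \ref{def:Vi} that $V_i = \conv\{p_c \mid c \in \Tk_\C(i)\}$, and that for each such $c$ (with $|c| \le d+1$) the point $p_c$ is a relative interior point of the face $P_c$. The crucial observation is that for $c \in \Tk_\C(i)$, the point $p_c$ lies on $H$ if and only if $\sigma \subseteq c$: indeed $P_c \subseteq P_\sigma \subseteq H$ when $\sigma \subseteq c$, so $p_c \in H$; conversely, if $\sigma \not\subseteq c$ then $P_c$ is not contained in the face $P_\sigma$ (here I would invoke that $P_\sigma$ is the intersection $\bigcap_{j \in \sigma} P_j$, so a face $P_c$ with $c \not\supseteq \sigma$ is not a subface of $P_\sigma$), hence a relative interior point $p_c$ of $P_c$ lies in $P_i \setminus P_\sigma \subseteq P_i \setminus H \subseteq H^>$. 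So every $p_c$ with $c \in \Tk_\C(i)$ lies either on $H$ (precisely when $\sigma \subseteq c$, i.e. $c \in \Tk_\C(\sigma)$) or strictly on the positive side $H^>$.

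The forward inclusion is then easy: $\conv\{p_c \mid c \in \Tk_\C(\sigma)\}$ is a convex combination of points all lying in $V_i$ and all lying on $H$, hence lies in $V_i \cap H$. For the reverse inclusion, take any point $q \in V_i \cap H$. Write $q = \sum_{c \in \Tk_\C(i)} \lambda_c \, p_c$ as a convex combination. Applying the affine functional defining $H$ (which is $0$ on $H$, positive on $H^>$): the value at $q$ is $0$, but it equals a nonnegative combination $\sum \lambda_c \cdot (\text{value at } p_c)$ in which every term with $\sigma \not\subseteq c$ contributes a strictly positive value. Hence $\lambda_c = 0$ for every $c$ with $\sigma \not\subseteq c$, so $q$ is actually a convex combination of the points $p_c$ with $c \in \Tk_\C(\sigma)$, giving $q \in \conv\{p_c \mid c \in \Tk_\C(\sigma)\}$.

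I do not expect a serious obstacle here; the one point that needs a little care is the claim that $p_c \in H^>$ (strictly) rather than merely $p_c \in H^\ge$ when $\sigma \not\subseteq c$ — this is exactly where I use that $p_c$ is a \emph{relative interior} point of $P_c$ together with the fact that $P_c \cap H$ is a proper face of $P_c$ (a relative interior point cannot lie on a proper face). One should also note that $\Tk_\C(\sigma)$ could be empty, in which case both sides are $\conv(\emptyset) = \emptyset$ and the statement holds vacuously; and $\Tk_\C(\sigma) \subseteq \Tk_\C(i)$ since $i \in \sigma$, so all the points $p_c$ appearing on the right-hand side are among those defining $V_i$, which is what makes the forward inclusion type-check.
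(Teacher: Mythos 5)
Your proof is correct and follows essentially the same route as the paper's: both observe that $V_i\subseteq P_i\subseteq H^\ge$ and that the generating points $p_c$, $c\in\Tk_\C(i)$, lie on $H$ exactly when $\sigma\subseteq c$, then conclude that the supporting hyperplane cuts out the convex hull of precisely those points (you make this last step explicit with an affine functional, which the paper treats as standard). The one step you justify only loosely---that $p_c\notin P_\sigma$ when $\sigma\not\subseteq c$---is exactly the fact recorded after Definition \ref{def:Vi} (namely $p_c\in P_j$ if and only if $j\in c$), which the paper's proof invokes directly by choosing $j\in\sigma\setminus c$.
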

\begin{proof}
Consider the points $\{p_c\mid c\in \Tk_\C(i)\}$, the convex hull of which is equal to $V_i$  by definition. Since $V_i\subseteq P_i$, we see that $V_i\subseteq H^\ge$. Thus $V_i\cap H$ is the convex hull of all points in $\{p_c\mid c\in \Tk_\C(i)\}$ which lie in $H$. If $c\in\Tk_\C(i)$ but $\sigma\not\subseteq c$, then we may choose $j\in \sigma\setminus c$, noting that $p_c\notin P_j$. In particular, $p_c\in P_i$ but $p_c\notin P_\sigma$. Thus $p_c$ lies in $H^>$ when $\sigma\not\subseteq c$. On the other hand, if $\sigma\subseteq c$ then $p_c\in P_\sigma\subseteq H$. Thus $V_i\cap H$ is the convex hull of  $\{p_c\mid c\in \Tk_\C(\sigma)\}$ as desired.
\end{proof}

\begin{lemma}\label{lem:trunkconv}
Let $\sigma\subseteq [n]$ be nonempty. Then $V_\sigma = \conv\{p_c\mid c\in\Tk_\C(\sigma)\}$.
\end{lemma}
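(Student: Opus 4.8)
The plan is to establish the identity by proving the two inclusions separately, with the substantive direction reduced to Lemma~\ref{lem:Hsupport}. First I would dispose of the case $|\sigma| = 1$, where the claim is just the definition of $V_i$, and then assume $|\sigma|\ge 2$ for the remainder of the argument.

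For the inclusion $\conv\{p_c\mid c\in\Tk_\C(\sigma)\}\subseteq V_\sigma$: if $c\in\Tk_\C(\sigma)$ then $\sigma\subseteq c$, so $c$ lies in $\Tk_\C(i)$ for every $i\in\sigma$, whence $p_c\in V_i$ for all such $i$, and thus $p_c\in V_\sigma$. As $V_\sigma=\bigcap_{i\in\sigma}V_i$ is convex, passing to the convex hull gives the inclusion. This direction is routine.

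For the reverse inclusion $V_\sigma\subseteq\conv\{p_c\mid c\in\Tk_\C(\sigma)\}$, I would fix any $i\in\sigma$. Since $|\sigma|\ge2$ and the $P_j$ are distinct full-dimensional facets of the pure polytopal complex $\P$, the face $P_\sigma=\bigcap_{j\in\sigma}P_j$ is a proper face of $P_i$ (two distinct facets of such a complex meet in a common face of strictly smaller dimension, so $P_\sigma\neq P_i$; it may be empty, but the empty set counts as a proper face). Let $H$ be a supporting hyperplane for this face of $P_i$. Lemma~\ref{lem:Hsupport} then identifies $V_i\cap H$ with exactly $\conv\{p_c\mid c\in\Tk_\C(\sigma)\}$. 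Since $V_\sigma\subseteq V_i$ and, by the observation following Definition~\ref{def:psigma}, $V_\sigma\subseteq P_\sigma\subseteq H$, we conclude $V_\sigma\subseteq V_i\cap H$, which is the desired set. Combining the two inclusions finishes the proof.

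I do not expect a genuine obstacle here: Lemma~\ref{lem:Hsupport} carries essentially all of the geometric content, and the only care required is bookkeeping around degenerate cases — verifying that $P_\sigma$ really is a proper face of $P_i$, and noting that the convention $\conv\emptyset=\emptyset$ keeps the statement correct precisely when $\Tk_\C(\sigma)$ is empty (which happens once $|\sigma|>d+1$, forcing both sides to be empty).
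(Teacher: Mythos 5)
Your proof is correct and follows essentially the same route as the paper: the easy inclusion via $p_c\in V_i$ for all $i\in\sigma$, and the reverse inclusion by fixing $i\in\sigma$, taking a supporting hyperplane $H$ for the face $P_\sigma$ of $P_i$, and invoking Lemma~\ref{lem:Hsupport} together with $V_\sigma\subseteq V_i\cap P_\sigma\subseteq V_i\cap H$. Your extra bookkeeping (properness of $P_\sigma$, the empty-trunk case) is harmless and consistent with the paper's conventions.
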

\begin{proof}
Let $C = \conv\{p_c\mid c\in\Tk_\C(\sigma)\}$. Then $C\subseteq V_\sigma$ since each $p_c$ with $c\in\Tk_\C(\sigma)$ lies in $V_j$ for all $j\in\sigma$. For the reverse inclusion, we consider two cases. If $\sigma= \{i\}$ then $C = V_i$ and the result is immediate. Otherwise, $|\sigma|\ge 2$ and we may choose $i\in\sigma$ and $H$ a supporting hyperplane for the face $P_\sigma$ of $P_i$. Observe that $V_\sigma\subseteq V_i\cap P_\sigma \subseteq V_i\cap H$, and by Lemma \ref{lem:Hsupport} $V_i\cap H = C$, proving the result.
\end{proof}

\begin{lemma}\label{lem:faces}
Let $\sigma$ and $\tau$ be nonempty subsets of $[n]$. Then $V_\sigma$ is a face of $V_\tau$ if and only if $\Tk_\C(\sigma)\subseteq \Tk_\C(\tau)$. \edit{Furthermore, $V_\sigma$ is a proper face of $V_\tau$ if and only if $\Tk_\C(\sigma)$ is a proper subset of $\Tk_\C(\tau)$.}
\end{lemma}
\begin{proof}
First suppose that $\Tk_\C(\sigma)\subseteq \Tk_\C(\tau)$. This implies that every codeword that contains $\sigma$ also contains $\tau$, and so $\Tk_\C(\sigma) = \Tk_\C(\sigma\cup \tau)$. Lemma \ref{lem:trunkconv} then implies that $V_\sigma = V_{\sigma\cup\tau}$, and so it suffices to prove that $V_{\sigma\cup\tau}$ is a face of $V_\tau$. We may reduce to the case in which $\tau\subseteq \sigma$, \edit{that is,} it suffice\edit{s} to prove that $V_\sigma$ is a face of all $V_i$ with $i\in\tau$. If $\sigma = \{i\}$ then $\tau= \{i\}$ and the result is immediate. Otherwise, $|\sigma|\ge 2$, and for any $i\in\tau$ we may choose a hyperplane $H$ supporting the face $P_\sigma$ of $P_i$. Lemma \ref{lem:Hsupport} implies that $H\cap V_i = \conv\{p_c\mid c\in\Tk_\C(\sigma)\}$, and Lemma \ref{lem:trunkconv} implies that this \edit{set} is $V_\sigma$. Thus $V_i\cap H = V_\sigma$ and $V_\sigma$ is a face of $V_i$ for all $i\in\tau$ as desired.

\edit{To prove that $\Tk_\C(\sigma)\subseteq \Tk_\C(\tau)$ whenever $V_\sigma$ is a face of $V_\tau$}, we argue by contrapositive. If $\Tk_\C(\sigma)\not\subseteq \Tk_\C(\tau)$ then there exists $c\in\C$ with $\sigma\subseteq c$ but $\tau\not\subseteq c$. Consider the point $p_c$. Since $\tau\not\subseteq c$, there exists $i\in \tau\setminus c$, and we see that $p_c\notin P_i$. But $V_\tau\subseteq V_i\subseteq P_i$, so $p_c\notin V_\tau$. On the other hand, $p_c\in V_\sigma$, so $V_\sigma$ is not contained in $V_\tau$, \edit{and thus cannot be a face of $V_\tau$}.

\edit{We can now prove the final sentence in the lemma. If $V_\sigma$ is a proper face of $V_\tau$, then $\Tk_\C(\sigma)$ is a subset of $\Tk_\C(\tau)$. If this containment were not proper, then $V_\tau$ would be a face of $V_\sigma$, a contradiction. Conversely, if $\Tk_\C(\sigma)$ is a proper subset of $\Tk_\C(\tau)$ then $V_\sigma$ is a face of $V_\tau$. If $V_\sigma$ were not a proper face, then we would have $V_\sigma = V_\tau$, which in turn implies $\Tk_\C(\tau)\subseteq \Tk_\C(\sigma)$, a contradiction. This proves the result. }
\end{proof}
\begin{lemma}\label{lem:properstuff}
Let $\sigma\subseteq [n]$ be nonempty. Then $\sigma$ lies in the intersection completion of $\C$ if and only if the following holds: $\Tk_\C(\sigma)$ is nonempty and properly contains $\Tk_\C(\sigma\cup\{i\})$ for all $i\in[n]\setminus \sigma$. 
\end{lemma}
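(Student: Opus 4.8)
The plan is to reduce the statement to a direct unwinding of definitions, via the observation that for nonempty $\sigma$, membership of $\sigma$ in the intersection completion of $\C$ is equivalent to the conjunction of $\Tk_\C(\sigma)\neq\emptyset$ and $\bigcap_{c\in\Tk_\C(\sigma)} c = \sigma$. The forward implication here is immediate: if $\sigma = c_1\cap\cdots\cap c_k$ with each $c_j\in\C$, then each $c_j$ contains $\sigma$ and hence lies in $\Tk_\C(\sigma)$, which is therefore nonempty; moreover $\{c_1,\ldots,c_k\}\subseteq\Tk_\C(\sigma)$ forces $\bigcap_{c\in\Tk_\C(\sigma)} c \subseteq c_1\cap\cdots\cap c_k = \sigma$, while the reverse inclusion is trivial since $\sigma\subseteq c$ for every $c\in\Tk_\C(\sigma)$. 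The backward implication is even simpler, as $\bigcap_{c\in\Tk_\C(\sigma)} c$ is by definition an intersection of codewords of $\C$ whenever $\Tk_\C(\sigma)$ is nonempty.

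Next I would rewrite the condition $\bigcap_{c\in\Tk_\C(\sigma)} c = \sigma$. Since $\sigma\subseteq\bigcap_{c\in\Tk_\C(\sigma)} c$ always holds, equality fails exactly when some element of $[n]\setminus\sigma$ belongs to every codeword of $\Tk_\C(\sigma)$. So the condition $\bigcap_{c\in\Tk_\C(\sigma)} c = \sigma$ is equivalent to: for each $i\in[n]\setminus\sigma$ there exists $c\in\Tk_\C(\sigma)$ with $i\notin c$.

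Finally I would identify this last condition with the proper-containment statement in the lemma. For any $i\in[n]\setminus\sigma$ one has $\Tk_\C(\sigma\cup\{i\}) = \{c\in\Tk_\C(\sigma)\mid i\in c\}$, which is automatically a subset of $\Tk_\C(\sigma)$, and the containment is proper precisely when some $c\in\Tk_\C(\sigma)$ omits $i$. Chaining the three steps then yields: $\sigma$ lies in the intersection completion of $\C$ if and only if $\Tk_\C(\sigma)\neq\emptyset$ and $\Tk_\C(\sigma\cup\{i\})\subsetneq\Tk_\C(\sigma)$ for all $i\in[n]\setminus\sigma$, which is the assertion.

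I do not anticipate a real obstacle; the argument is a purely combinatorial translation. The only point needing a moment of care is the degenerate case $\Tk_\C(\sigma)=\emptyset$, and this is handled uniformly: if $\Tk_\C(\sigma)=\emptyset$ then $\sigma$ is contained in no codeword, so it cannot be an intersection of codewords (any such intersection is contained in one of them), hence it lies outside the intersection completion — consistent with the right-hand side of the equivalence failing as well.
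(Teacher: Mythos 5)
Your proof is correct and follows essentially the same route as the paper's: both reduce membership of $\sigma$ in the intersection completion to the statement that the intersection of all codewords containing $\sigma$ equals $\sigma$, and then translate the failure of that equality into $\Tk_\C(\sigma)=\Tk_\C(\sigma\cup\{i\})$ for some $i\in[n]\setminus\sigma$. The only cosmetic difference is that by intersecting over all of $\Tk_\C(\sigma)$ you handle $\sigma=[n]$ uniformly, whereas the paper treats that case separately before intersecting chosen codewords $c_i$ indexed by $i\in[n]\setminus\sigma$.
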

\begin{proof}
If $\sigma$ is an intersection of codewords in $\C$, then there must be a codeword containing $\sigma$, and thus $\Tk_\C(\sigma)$ is nonempty. If there exists $i\in[n]\setminus \sigma$ such that $\Tk_\C(\sigma) = \Tk_\C(\sigma\cup\{i\})$, then every codeword of $\C$ containing $\sigma$ also contains $i$. This is a contradiction, since $\sigma$ is the intersection of all codewords in $\C$ that contain it.

For the converse we consider two cases. If $\sigma = [n]$ and $\Tk_\C(\sigma)$ is nonempty then $[n]\in \C$ and the result follows. Otherwise $\sigma$ is a proper subset of $[n]$. Since $\Tk_\C(\sigma)$ is nonempty and properly contains $\Tk_\C(\sigma\cup\{i\})$ for all $i\in[n]\setminus \sigma$, for every $i\in[n]\setminus \sigma$ we may choose a codeword $c_i$ with $\sigma\subseteq c_i$ and $i\notin c_i$. The intersection of all such $c_i$ is $\sigma$, proving the result. 
\end{proof}

\begin{lemma}\label{lem:itworks}The set $\V = \{V_1,\ldots, V_n\}$ is a closed realization of the intersection completion of $\C$. In particular, $\V$ is a realization of $\C$ if and only if $\C$ is intersection complete. 
\end{lemma}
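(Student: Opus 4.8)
The plan is to prove that $\code(\V)$ is exactly the intersection completion of $\C$; the ``in particular'' statement then follows immediately, since $\C$ is intersection complete precisely when it equals its own intersection completion. First note that each $V_i = \conv\{p_c\mid c\in\Tk_\C(i)\}$ is a (possibly empty) polytope, hence closed and convex, so $\V$ is a legitimate candidate for a closed realization. Fix a nonempty $\sigma\subseteq[n]$; then $\sigma\in\code(\V)$ if and only if the atom $\A_\V^\sigma = V_\sigma\setminus\bigcup_{j\in[n]\setminus\sigma}V_j$ is nonempty, and throughout I will use that $V_\sigma\cap V_j = V_{\sigma\cup\{j\}}$. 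The empty codeword is handled separately: since $\P$ is a finite polytopal complex, $\bigcup_i V_i$ is bounded and misses some point of $\R^m$, so $\emptyset\in\code(\V)$, matching the standing convention that $\emptyset$ lies in every code, and hence in the intersection completion.

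For the inclusion that every $\sigma$ in the intersection completion lies in $\code(\V)$, suppose $\sigma$ lies in the intersection completion of $\C$. By Lemma \ref{lem:properstuff}, $\Tk_\C(\sigma)$ is nonempty and properly contains $\Tk_\C(\sigma\cup\{j\})$ for every $j\in[n]\setminus\sigma$. Lemma \ref{lem:trunkconv} then shows $V_\sigma = \conv\{p_c\mid c\in\Tk_\C(\sigma)\}$ is a nonempty polytope, and Lemma \ref{lem:faces} shows each $V_{\sigma\cup\{j\}} = V_\sigma\cap V_j$ is a face of $V_\sigma$. I will check this face is \emph{proper}: picking $c_0\in\Tk_\C(\sigma)\setminus\Tk_\C(\sigma\cup\{j\})$ yields $p_{c_0}\in V_\sigma$ with $j\notin c_0$, so $p_{c_0}\notin P_j\supseteq V_j$, whence $V_\sigma\cap V_j\subsetneq V_\sigma$. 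Since a polytope is never covered by finitely many of its proper faces --- its relative interior meets none of them --- we obtain $\relinterior(V_\sigma)\subseteq\A_\V^\sigma$, so $\sigma\in\code(\V)$.

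For the reverse inclusion I argue the contrapositive with Lemma \ref{lem:properstuff}. If $\sigma$ is not in the intersection completion, then either $\Tk_\C(\sigma)=\emptyset$, in which case $V_\sigma=\emptyset$ and $\sigma\notin\code(\V)$; or there is some $i\in[n]\setminus\sigma$ with $\Tk_\C(\sigma)=\Tk_\C(\sigma\cup\{i\})$, in which case Lemma \ref{lem:trunkconv} gives $V_\sigma = V_{\sigma\cup\{i\}} = V_\sigma\cap V_i\subseteq V_i$, so $\A_\V^\sigma=\emptyset$ and again $\sigma\notin\code(\V)$. Combining the two inclusions shows that $\code(\V)$ equals the intersection completion of $\C$, as desired.

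Most of this is a routine translation between the combinatorics of trunks and the face lattice of the $V_i$, using the three lemmas above. The only step needing genuine care is verifying, in the first direction, that $V_\sigma\cap V_j$ is a \emph{proper} face of $V_\sigma$ rather than all of it; this is exactly where the distinctness of the points $p_c$ --- concretely, the fact that $p_c\in P_j$ if and only if $j\in c$ --- enters, and it is the crux that makes the relative-interior argument go through.
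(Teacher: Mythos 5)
Your proof is correct and follows essentially the same route as the paper: it reduces membership in $\code(\V)$ to the trunk conditions of Lemma \ref{lem:properstuff} via Lemmas \ref{lem:trunkconv} and \ref{lem:faces}, with the covering step handled by the fact that a polytope's relative interior avoids its proper faces. You merely make explicit two points the paper leaves implicit (the properness of the face $V_{\sigma\cup\{j\}}$ via the point $p_{c_0}$, and the relative-interior argument), which is a fair expansion rather than a different approach.
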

\begin{proof}
Let $\widehat{\C}$ denote the intersection completion of $\C$. We argue for each nonempty $\sigma\subseteq[n]$ that $\sigma\in \widehat\C$ if and only if $\sigma\in \code(\V)$. By Lemma \ref{lem:properstuff} it suffices to argue that $\sigma\in\code(\V)$ if and only if $\Tk_\C(\sigma)$ is nonempty and $\Tk_\C(\sigma\cup\{i\})$ is a proper subset of $\Tk_\C(\sigma)$ for all $i\in[n]\setminus\sigma$. By \edit{Lemma \ref{lem:trunkconv} and} Lemma \ref{lem:faces}, this condition is equivalent to the requirement that $V_\sigma$ is nonempty, and $V_{\sigma\cup\{i\}}$ is a proper face of $V_\sigma$ for all $i\in[n]\setminus \sigma$. This is in turn equivalent to the statement that $V_\sigma$ is nonempty and not covered by $\{V_i\mid i\in[n]\setminus\sigma\}$, which happens if and only if $\sigma\in \code$\edit{(}$\V$\edit{)}, proving the result.
\end{proof}

\begin{example}\label{ex:2d+1} To make the construction in Lemma \ref{lem:itworks} concrete, we give an example for the intersection complete code $\C = \{\mathbf{123}, 12, 1,2,3,\emptyset\}$. We choose $\P$ in $\R^2$ with facets $P_1, P_2, P_3$ which are triangles meeting at a common vertex. This is shown below \edit{on the left}, and the various $p_\sigma$ are \edit{shown with labeled} dots. \edit{The righthand side of the figure illustrates the closed convex realization $\V = \{V_1, V_2, V_3\}$ constructed in the proof of Lemma \ref{lem:itworks}.} The sets $V_1$ and $V_2$ are triangles, and $V_3$ is the line segment from $p_{3}$ to $p_{123}$.\[
\includegraphics{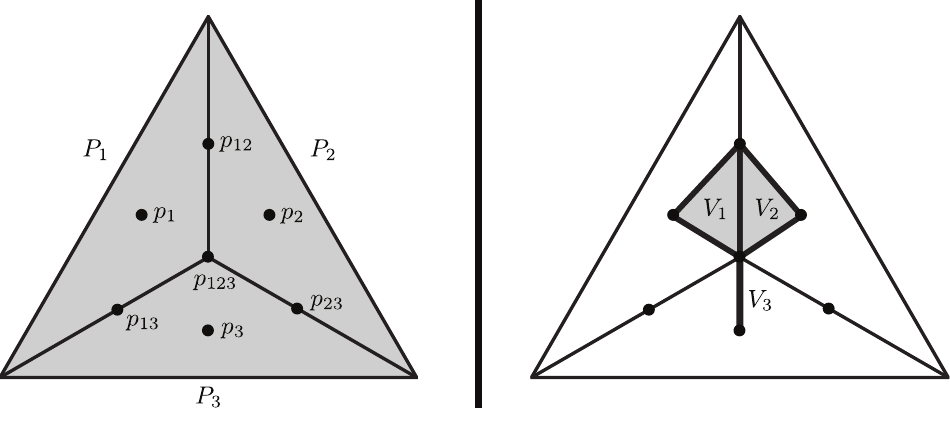}
\]
\end{example}

\begin{restatetheorem}{\ref{thm:cdimlinear}}
Let $\C\subseteq 2^{[n]}$ be an intersection complete code \edit{with $n\ge 2$}, and \edit{let} $d=\dim(\Delta(\C))$. Then $\cdim(\C)\le \min \{2d+1, n-1\}$.
\end{restatetheorem}
\begin{proof}
In this section we have chosen a polytopal complex $\P$ in $\R^{\min\{2d+1,n-1\}}$, and used it to construct a collection $\V = \{V_1,\ldots, V_n\}$ of closed convex sets. Lemma \ref{lem:itworks} says that $\V$ realizes $\C$ if and only if $\C$ is intersection complete. This proves the result. 
\end{proof}
In Section \ref{sec:SDelta}, we will see that this bound on closed embedding dimension may fail dramatically for open embedding dimension. Before proving this, we use Section \ref{sec:sunflowercodeversion} to recall a theorem from \cite{sunflowers}, and show that it is equivalent to a statement about the open embedding dimension of a family of intersection complete codes.  
\section{A Code Version of the Sunflower Theorem}\label{sec:sunflowercodeversion}

In this section we recall a result regarding sunflowers of convex open sets. In Section \ref{sec:SDelta}, we will use this result to build a family of intersection complete codes with large open embedding dimension.

\begin{definition}\label{def:sunflower}
Let $\U = \{U_1,\ldots, U_n\}$ be a collection  of convex sets in $\R^d$ and let $\C = \code(\U)$. The collection $\U$ is called a \emph{sunflower} if $[n]\in\C$, and $\C\setminus \{[n]\}$ contains \edit{only} codewords of weight at most 1. That is, a sunflower is just a $1$-flexible sunflower. As in Definition \ref{def:flexiblesunflower}, we will call the $U_i$ \emph{petals} and $U_{[n]}$ will be called the \emph{center} of $\U$. \end{definition}

\begin{theorem}[Sunflower Theorem, \cite{sunflowers}]\label{thm:sunflower}
Let $d\ge 1$, let $\U = \{U_1,\ldots, U_{d+1}\}$ be a convex open sunflower in $\R^d$, and for each $i\in[d+1]$ choose a point $p_i\in U_i$. Then $\conv\{p_1,\ldots, p_{d+1}\}$ contains a point in the center of $\U$. 
\end{theorem}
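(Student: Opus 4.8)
The plan is to reduce, via Proposition~\ref{prop:singlecover}, to a purely geometric fact about the petals, and then argue by contradiction using a separating hyperplane.

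Two easy reductions first. If some petal $U_i$ equals the center $U_{[d+1]}=:Z$, then $p_i\in Z\subseteq\conv\{p_1,\dots,p_{d+1}\}$ and we are done; likewise if some $p_i$ already lies in $Z$. The case $d=1$ is elementary: $U_1,U_2$ are overlapping open intervals, and if $\overline{p_1p_2}$ missed $U_1\cap U_2$, then $U_1\cap U_2$ would lie strictly beyond one of $p_1,p_2$, and convexity of the interval containing that point forces the endpoint into $U_1\cap U_2$, a contradiction. So assume $d\ge2$ and that no $U_i$ equals $Z$.

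The structural fact I would establish is that, for $d\ge2$, all pairwise intersections of petals coincide: $U_i\cap U_j=Z$ for every $i\ne j$, and hence $\bigcap_{i\in\sigma}U_i=Z$ whenever $|\sigma|\ge2$. A sunflower code is intersection complete and its only codeword of weight $\ge2$ is $[d+1]$, so Proposition~\ref{prop:singlecover} applied to $\{i,j\}$ yields an index $k$ with $U_i\cap U_j\subseteq U_k$; applying it to $\{i,j,k\}$ yields a further index to absorb into the intersection, and iterating over all indices gives $U_i\cap U_j\subseteq\bigcap_m U_m=Z$ (the reverse inclusion being trivial). In particular the open sets $U_i\setminus\overline Z$ are pairwise disjoint --- the ``fan'' shape of a sunflower. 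This is the only step where openness is used essentially: for closed convex sets one may instead take $d+1$ segments through a common point, which is exactly the configuration defeating the statement that is mentioned just after the theorem.

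Now suppose, for contradiction, that $\conv\{p_1,\dots,p_{d+1}\}$ is disjoint from $Z$. Since $\conv\{p_i\}$ is compact convex and $Z$ is open convex, there is a hyperplane $H$ with every $p_i\in H^{\le}$ and $Z\subseteq H^{>}$. For each $i$, $U_i$ contains $p_i\in H^{\le}$ and the set $Z\subseteq H^{>}$, so $U_i$ meets $H$; set $G_i:=U_i\cap H$, a nonempty relatively open convex subset of $H\cong\R^{d-1}$. Then $G_i\cap G_j=(U_i\cap U_j)\cap H=Z\cap H=\emptyset$, so the $G_i$ are pairwise disjoint; and for each $z\in Z$ the segment $\overline{p_i z}\subseteq U_i$ meets $H$ in a point $q_i(z)\in G_i$. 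The crux is to extract a contradiction from these $d+1$ pairwise disjoint convex sets $G_i\subseteq\R^{d-1}$ together with the crossing maps $q_i$. When $d=2$, $H$ is a line and $G_1,G_2,G_3$ are disjoint open intervals, hence linearly ordered; relabeling so $G_1<G_2<G_3$ gives $q_1(z)<q_2(z)<q_3(z)$ for all $z\in Z$. A short affine computation shows that each inequality $q_i(z)<q_j(z)$, holding throughout the connected set $Z$, is equivalent to $Z$ lying on one fixed side of the line through $p_i$ and $p_j$; assembling these side-conditions pins $Z$ to a region meeting $H^{\le}$, contradicting $Z\subseteq H^{>}$. For general $d$ I would promote this step either by induction on $d$ (slicing by a hyperplane through a point of $Z$ yields a genuine sunflower in $\R^{d-1}$, to which one feeds suitable crossing points), or by replacing the interval-ordering argument with a Radon/Tverberg argument applied to the $d+1$ crossing points $q_i(z)\in\R^{d-1}$ --- note that $d+1=(d-1)+2$ is exactly the Radon threshold --- which is the line of attack that the stronger Theorem~\ref{thm:flexible} pushes through using Tverberg's theorem. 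This finishing step is where I expect the real difficulty to lie.
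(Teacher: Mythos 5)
Your setup (the trivial reductions, the fact that $U_i\cap U_j$ equals the center $Z$ for $i\neq j$, the separating hyperplane $H$ with the $p_i$ in $H^{\le}$ and $Z\subseteq H^{>}$, and the crossing points $q_i(z)\in U_i\cap H$) matches the opening moves of the paper's argument, which proves this statement as the $k=1$ case of Theorem~\ref{thm:flexible}. But the proposal stops exactly where the proof actually lives, and you say so yourself (``This finishing step is where I expect the real difficulty to lie''). Radon's theorem applied to the $d+1$ crossing points in $H\cong\R^{d-1}$ only produces a point $p\in H$ lying in the convex hulls of both parts of a Radon partition; by itself this contradicts nothing, since $H$ is disjoint from $Z$ by construction. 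The missing idea is how to force $p$ into (the closure of) the center: the paper does this with the supporting-halfspace machinery of Lemmas~\ref{lem:densesupport}, \ref{lem:goodhalfspaces} and \ref{lem:densecut} --- for a dense set of ``well supported'' boundary points $b$ of $Z$, every petal not containing $b$ lies in $H_b^{>}$, so (since $b$ lies in at most one petal) one part of the Radon partition lies entirely in $H_b^{>}$, hence $p\in H_b^{>}$ for all such $b$, hence $p\in\overline{Z}$. This also exposes a second gap in your setup: to get a contradiction from $p\in\overline{Z}\cap H$ you need $H$ to avoid $\partial Z$, not just $Z$, and the paper arranges this by first nudging each $p_i$ (using openness of the $U_i$) before choosing $H$. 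Your separation only gives $Z\subseteq H^{>}$, which permits boundary points of $Z$ on $H$, so even with the halfspace lemmas in hand your configuration would not yet yield a contradiction.

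Two smaller points. Your proposed induction route does not work as described: slicing by a hyperplane through a point of $Z$, or by $H$ itself, does not produce a sunflower in $\R^{d-1}$ in the relevant configuration --- the sets $U_i\cap H$ have empty common intersection precisely because $H$ separates, so there is no ``center'' to induct on; this is exactly why the boundary/halfspace argument is needed instead. And your remark about openness is misplaced: the identity $U_i\cap U_j=Z$ is a purely combinatorial consequence of the sunflower code (it holds for closed sets too, including the segments-through-a-point example); openness is used essentially in the later steps (perturbing the $p_i$ so that $H$ misses $\overline{Z}$, and in Lemma~\ref{lem:goodhalfspaces}). The $d=2$ ordering argument you sketch may well be completable, but the claim that the side-conditions ``pin $Z$ to a region meeting $H^{\le}$'' is asserted rather than proved, and in any case it does not extend to general $d$ without the ingredients above.
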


Note that the result above fails when we consider a sunflower with $d$ petals in $\R^d$. In particular, one may take an infinite rectangular \edit{prism} about each coordinate axis to form a sunflower whose center is a hypercube at the origin. In this situation, choosing the $p_i$ \edit{to be} sufficiently large positive \edit{multiples of the coordinate basis vectors} yields points in each petal whose convex hull does not touch the center of the sunflower. 

The sunflower theorem may be restated purely in the language of convex codes. We do this below in order to simplify our discussion in the following section, and also to foreshadow our applications of Theorem \ref{thm:flexible} in Section \ref{subsec:SCoverD}.

\begin{definition}\label{def:Sn}
For $n\ge 1$, define  $\S_n\subseteq 2^{[n+1]}$ to be the code consisting of the following codewords: $[n]$, all singleton sets, all pairs $\{i, n+1\}$ for $1\le i \le n$, and the empty set. 
\end{definition} 

\begin{example}
Let us look at the first few $\S_n$:\begin{align*}
\S_1 = & \{\mathbf{12}, 1, 2, \emptyset\},\\
\S_2 = &\{\mathbf{12},\mathbf{13},\mathbf{23}, 1, 2, 3, \emptyset\},\\
\S_3 = &\{\mathbf{123}, \mathbf{14},\mathbf{24},\mathbf{34}, 1, 2, 3, 4, \emptyset\}.
\end{align*} These have \edit{convex open} realizations in $\R^1,\R^2$, and $\R^3$ respectively, illustrated below. Theorem \ref{thm:sunflowercodeversion} \edit{below} says that these realizations are minimal in dimension. 
\[
\includegraphics[width=31em]{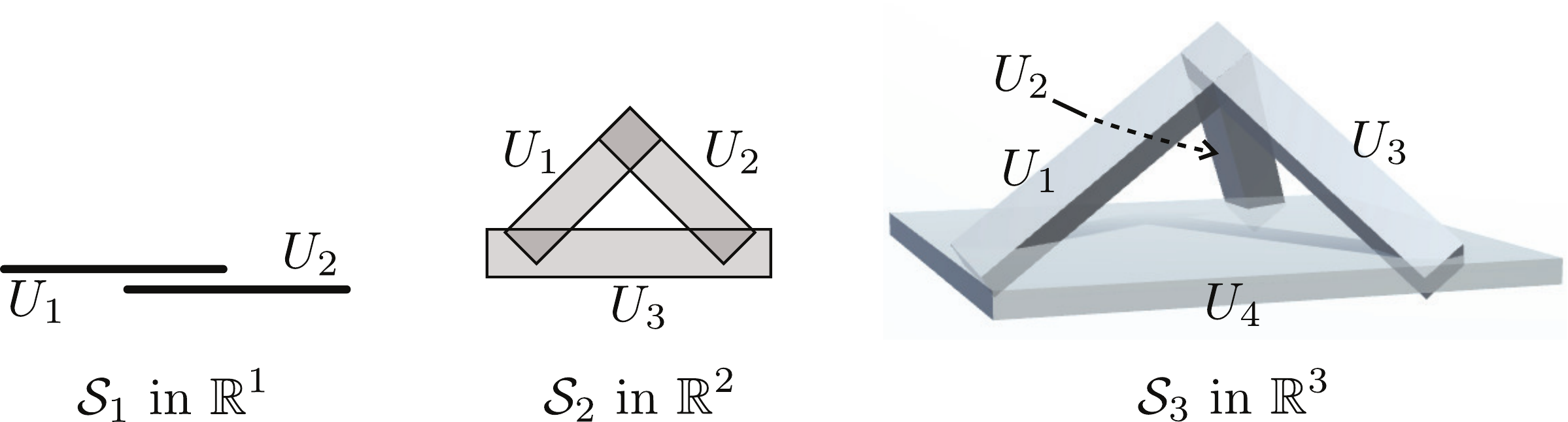}
\]
\end{example}

Note that $\S_n$ is an intersection complete code. The sunflower theorem can be restated as follows:
\begin{theorem}[Sunflower Theorem, Code Version]\label{thm:sunflowercodeversion}
For all $n\ge 1$, $\odim(\S_n) = n$. 
\end{theorem}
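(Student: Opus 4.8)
The plan is to prove $\odim(\S_n) = n$ by establishing the upper and lower bounds separately, translating back and forth between the code $\S_n$ and sunflowers of convex open sets via the correspondence built into Definition~\ref{def:Sn}. The key dictionary entry is this: a realization $\U = \{U_1,\ldots,U_{n+1}\}$ of $\S_n$ in $\R^d$ is precisely a convex open sunflower with petals $U_1,\ldots,U_n$ and an extra set $U_{n+1}$, where the center $U_{[n]}$ is nonempty (that is the codeword $[n]$), each singleton atom is nonempty, each pair $\{i,n+1\}$ appears, and crucially no pair $\{i,j\}$ with $i,j\le n$ appears — so the petals $U_1,\ldots,U_n$ pairwise intersect \emph{only} inside the center. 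Essentially, $U_{n+1}$ is a ``bookkeeping'' set that must meet each petal but not the center, and meet no triple; its role is mostly to force the petals not to be a simplicial-complex-like configuration, so that the sunflower-theoretic obstruction actually bites.

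For the upper bound $\odim(\S_n)\le n$, I would exhibit an explicit realization in $\R^n$. Take the center $U_{[n]}$ to be a small open cube around the origin, and for each $i\in[n]$ let $U_i$ be a thin open ``slab'' or rectangular box extending from the center out along the positive $x_i$-axis, chosen thin enough that the $U_i$ meet pairwise only near the origin (inside the center). This is exactly the configuration described in the remark following Theorem~\ref{thm:sunflower}. Then add $U_{n+1}$ as, say, a thin open box that runs along the line $x_1=x_2=\cdots=x_n$ (the diagonal direction) far enough from the origin to miss the center, arranged to clip a far portion of each $U_i$; one checks it meets each petal, misses the center, and meets no two petals simultaneously outside where it is designed to, and contains a point in no other atom of interest. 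A little care is needed to verify $\code(\U) = \S_n$ exactly — that no spurious codewords appear — but this is a routine explicit check, and I would present it compactly, possibly with a figure.

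For the lower bound $\odim(\S_n)\ge n$, suppose for contradiction that $\S_n$ has a convex open realization $\U$ in $\R^d$ with $d \le n-1$, i.e. $d < n$. Then $\{U_1,\ldots,U_n\}$ is a convex open sunflower in $\R^d$ with $n \ge d+1$ petals, so in particular it contains a sub-sunflower $\{U_1,\ldots,U_{d+1}\}$ (the sunflower property is inherited by subcollections since deleting sets only merges atoms). For each $i\in[d+1]$ pick $p_i \in U_i \cap U_{n+1}$ — such a point exists precisely because $\{i,n+1\}\in\S_n$, so the atom $\A_\U^{\{i,n+1\}}$ witnesses $U_i\cap U_{n+1}\neq\emptyset$; note $p_i\notin U_j$ for any other $j\le n$ and in particular $p_i$ is not in the center. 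By the Sunflower Theorem (Theorem~\ref{thm:sunflower}), $\conv\{p_1,\ldots,p_{d+1}\}$ contains a point $q$ in the center $U_{[n]}$ of the sunflower. But $q\in U_{n+1}$ as well, since $U_{n+1}$ is convex and contains all the $p_i$. Then $q\in U_1\cap\cdots\cap U_n\cap U_{n+1} = U_{[n+1]}$, so $[n+1]\in\code(\U)$ — contradicting the fact that the largest codeword of $\S_n$ is $[n]$, which has weight $n$, not $n+1$. Hence $d\ge n$, giving $\odim(\S_n)\ge n$.

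The main obstacle is the careful verification in the upper-bound construction that the code of the proposed arrangement is \emph{exactly} $\S_n$ and not something larger or with extra codewords — getting $U_{n+1}$ placed so that it meets each petal, avoids the center, and creates no unwanted intersections takes some fiddling with parameters (slab thicknesses and the distance of $U_{n+1}$ from the origin). The lower-bound argument, by contrast, is short and clean once the sunflower dictionary is set up; the only subtlety there is the passage to a sub-sunflower and the observation that $U_{n+1}$ being convex is exactly what lets the convex-hull point land inside it, which is the crux of why the extra set $U_{n+1}$ is present in the definition of $\S_n$ at all.
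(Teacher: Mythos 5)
Your lower bound is essentially the paper's argument: pick $p_i\in U_i\cap U_{n+1}$ (possible since $\{i,n+1\}\in\S_n$), apply the Sunflower Theorem to conclude that $\conv\{p_1,\ldots,p_{d+1}\}$ meets the center, and use convexity of $U_{n+1}$ to force a point of $U_{[n]}\cap U_{n+1}$, contradicting $[n+1]\notin\S_n$. The paper simply takes $d=n-1$ so that all $n$ petals are used at once; your passage to a sub-sunflower $\{U_1,\ldots,U_{d+1}\}$ is unnecessary (ruling out $\R^{n-1}$ already suffices) but harmless, and it is legitimate: since the only codeword of $\S_n$ containing two or more indices from $[n]$ is $[n]$ itself, the subcollection is again a sunflower with center exactly $U_{[n]}$. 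You should, as the paper does, dispose of $n=1$ separately, since Theorem \ref{thm:sunflower} requires $d\ge 1$.

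The upper bound is where your proposal has a genuine flaw. A ``thin open box that runs along the line $x_1=\cdots=x_n$,'' placed far from the origin, does not clip any petal: the petals of the coordinate-direction sunflower hug the coordinate axes, and at distance $C$ from the origin the diagonal line is at distance on the order of $C$ from every axis, so such a box meets no $U_i$ at all (while near the origin it sits inside the center). The set you want is transverse to the diagonal, not along it: for instance a thin slab $\{x\mid C<\textstyle\sum_i x_i<C+\varepsilon\}$ (or a small thickening of $\conv\{Ce_1,\ldots,Ce_n\}$) for large $C$, which meets the far end of each petal, misses the center, and creates no new multi-petal intersections because the petals are pairwise far apart out there; with that replacement the verification that the code is exactly $\S_n$ really is routine. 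Note also that the paper sidesteps the construction entirely: since $\S_n$ is intersection complete with $n+1$ maximal codewords, the bound $\odim(\S_n)\le\max\{2,n\}=n$ for $n\ge 2$ follows directly from \cite[Theorem 1.2]{openclosed}, which is the quicker route if you do not insist on an explicit realization.
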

\begin{proof}
When $n=1$, we have $\S_n = \{12, 1, 2, \emptyset\}$, which can be realized by two overlapping intervals in $\R^1$. For $n\ge 2$, $\S_n$ has $n+1$ maximal codewords, and so by  \cite[Theorem 1.2]{openclosed} $\S_n$ has an open realization in $\R^n$. We will show that it does not have an open realization in $\R^{n-1}$. Suppose for contradiction that there exists an open realization  $\U = \{U_1,\ldots, U_{n+1}\}$ of $\S_n$ in $\R^{n-1}$. Observe that $\{U_1,\ldots, U_n\}$ is a sunflower, and $U_{n+1}$ intersects $U_i$ for all $i\in[n]$. Thus for each $i\in[n]$ we may choose $p_i\in U_i\cap U_{n+1}$. The convex hull $\conv\{p_1,\ldots, p_n\}$ is contained in $U_{n+1}$, but by Theorem \ref{thm:sunflower} this convex hull also meets $U_{[n]}$. Thus $U_{n+1}\cap U_{[n]}$ is nonempty. Since $[n+1]$ is not a codeword in $\S_n$ this is a contradiction. 
\end{proof}

In the following section we build on the family $\S_n$ to construct a family of intersection complete codes on $n$ neurons whose open embedding dimension is $\binom{n-1}{\lfloor (n-1)/2\rfloor}$. 
\section{A Family of Codes with Large Open Embedding Dimension}\label{sec:SDelta}

In this section, we will associate to every simplicial complex $\Delta\subseteq 2^{[n]}$ an intersection complete code $\S_\Delta\subseteq 2^{[n+1]}$. As long as $\Delta$ has at least two facets, the open embedding dimension of $\S_\Delta$ is exactly the number of facets in $\Delta$. \edit{To start, recall the following definition.}

\begin{restatedefinition}{\ref{def:SDelta}}
Let $\Delta\subseteq 2^{[n]}$ be a simplicial complex. Define $\S_\Delta\subseteq 2^{[n+1]}$ to be the code \[
\S_\Delta \od \left(\Delta \ast (n+1)\right)\cup\{[n]\},
\]
where $\Delta\ast (n+1)$ denotes the cone over $\Delta$ with apex $n+1$. 
\end{restatedefinition}

We start with some straightforward structural observations about the code $\S_\Delta$.

\begin{proposition}\label{prop:SDelta}
\edit{The code} $\S_\Delta$ is intersection complete. If $\Delta\subsetneq 2^{[n]}$ and has $m$ facets, then $\S_\Delta$ has $m+1$ maximal codewords. In particular, $\odim(\S_\Delta)\le \max\{2, m\}$.
\end{proposition}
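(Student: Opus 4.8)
The plan is to verify the three claims in turn, each of which follows fairly directly from the combinatorial structure of $\S_\Delta = (\Delta \ast (n+1)) \cup \{[n]\}$.

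First I would show that $\S_\Delta$ is intersection complete. The set $\Delta \ast (n+1)$ is a simplicial complex, hence closed under intersection, so the only intersections I need to check are those involving the extra codeword $[n]$. Given a codeword $\sigma \in \Delta \ast (n+1)$, I claim $[n] \cap \sigma = \sigma \setminus \{n+1\}$ is again a codeword of $\S_\Delta$: indeed $\sigma \setminus \{n+1\} \in \Delta$ by definition of the cone, and $\Delta \subseteq \Delta \ast (n+1) \subseteq \S_\Delta$. Finally $[n] \cap [n] = [n] \in \S_\Delta$ trivially. So $\S_\Delta$ is intersection complete.

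Next I would count the maximal codewords under the hypothesis $\Delta \subsetneq 2^{[n]}$ with $m$ facets. The facets of $\Delta \ast (n+1)$ are exactly the sets $F \cup \{n+1\}$ where $F$ ranges over the $m$ facets of $\Delta$; each of these has weight at most $n$ (since $F \subseteq [n]$ and $\Delta \subsetneq 2^{[n]}$ forces $F \subsetneq [n]$, actually $|F \cup \{n+1\}| \le n$), and none of them contains $[n]$ because $[n]$ does not contain $n+1$. Conversely $[n]$ is not contained in any $F \cup \{n+1\}$ for the same reason, and $[n]$ is not contained in any smaller face of $\Delta \ast (n+1)$. So the maximal codewords of $\S_\Delta$ are precisely the $m$ facets of $\Delta \ast (n+1)$ together with $[n]$, giving $m+1$ maximal codewords. (I should note the edge behavior: the hypothesis $\Delta \subsetneq 2^{[n]}$ is what guarantees $[n] \notin \Delta \ast (n+1)$, so that $[n]$ is genuinely a new maximal codeword rather than being absorbed.)

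Finally, the bound $\odim(\S_\Delta) \le \max\{2, m\}$ is immediate from the previous two facts: $\S_\Delta$ is an intersection complete code with $m+1$ maximal codewords, so by the result of \cite{openclosed} cited in the introduction (open and closed embedding dimensions of an intersection complete code with $k+1 = m+1$ maximal codewords are bounded above by $\max\{2, k\} = \max\{2, m\}$), we get $\odim(\S_\Delta) \le \max\{2, m\}$. I expect the main (though still minor) obstacle to be bookkeeping around degenerate cases — in particular making sure the counting of maximal codewords is correct when $\Delta$ has a facet of weight $n-1$ versus smaller, and confirming that no facet of $\Delta \ast (n+1)$ accidentally coincides with or contains $[n]$; the hypothesis $\Delta \subsetneq 2^{[n]}$ is precisely what rules this out.
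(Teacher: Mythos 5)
Your proposal is correct and follows essentially the same route as the paper: intersection completeness because $\S_\Delta$ is a simplicial complex plus one extra codeword, identification of the maximal codewords as the $m$ cone facets $F_i\cup\{n+1\}$ together with $[n]$ (with $\Delta\subsetneq 2^{[n]}$ ensuring $[n]$ is not absorbed), and the final bound quoted from \cite[Theorem 1.2]{openclosed}. The only quibble is a slightly garbled attribution of reasons in the containment check ($[n]\not\subseteq F_i\cup\{n+1\}$ follows from $F_i\subsetneq[n]$, not from $n+1\notin[n]$), but the needed facts all appear in your argument.
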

\begin{proof}
First note that $\S_\Delta$ is a simplicial complex, plus the codeword $[n]$. Adding a single codeword to a simplicial complex always yields an intersection complete code, so $\S_\Delta$ is intersection complete. 

Let $F_1,\ldots, F_{m}$ be the facets of $\Delta$. Observe that \edit{each} maximal codeword of $\S_\Delta$ \edit{is} either \edit{a} facet of $\Delta\ast (n+1)$, or equal to $[n]$. The facets of $\Delta\ast(n+1)$ are just $F_i\cup \{n+1\}$ for $i\in[m]$. Since $\Delta\subsetneq 2^{[n]}$, $[n]$ is \edit{an additional} a maximal codeword of $\S_\Delta$, so $\S_\Delta$ has $m+1$ maximal codewords in total. The bound $\odim(\S_\Delta)\le \max\{2, m\}$ then follows immediately from \cite[Theorem 1.2]{openclosed}.
\end{proof}

\begin{restatetheorem}{\ref{thm:SDelta}}
Let $\Delta\subseteq 2^{[n]}$ be a simplicial complex with $m\ge 2$ facets. Then $\S_\Delta$ (as given by Definition \ref{def:SDelta}) is an intersection complete code with $m+1$ maximal codewords, and $\odim(\S_\Delta) = m$.
\end{restatetheorem}

\begin{proof} By Proposition \ref{prop:SDelta} we know that $\S_\Delta$ is intersection complete, has $m+1$ maximal codewords, and \edit{satisfies} $\odim(\S_\Delta) \le m$. Thus it suffices to show that $\S_\Delta$ does not have an open realization in $\R^{m-1}$. Suppose for contradiction that we had such a realization $\U = \{U_1,\ldots, U_{n+1}\}$.

Label the facets of $\Delta$ as $F_1,\ldots, F_{m}$, and for each $i\in[m]$ define $V_i = U_{F_i}$. Lastly, define $V_{m+1} = U_{n+1}$. Now observe that the pairwise intersection of any two distinct $V_i$ with $i\in[m]$ is $U_{[n]}$ \edit{since $[n]$ is the only codeword of $\S_\Delta$ that properly contains more than one $F_i$. Thus} $\{V_1,\ldots, V_{m}\}$ is a sunflower. Note that $V_{m+1}$ intersects each petal of this sunflower since $F_i\cup\{n+1\}$ is a codeword of $\S_\Delta$ for all $i\in[m]$. However, $V_{m+1}$ does not intersect $U_{[n]}$ \edit{since $[n]$ is not a face of $\Delta$ (recall $m\ge 2$). 

We have constructed an open sunflower $\{V_1,\ldots, V_m\}$ in $\R^{m-1}$ and a convex open set $V_{m+1}$ which intersects each petal of the sunflower but not its center. Equivalently,}  $\{V_1,\ldots, V_{m+1}\}$ is a convex open realization of $\S_{m}$ in $\R^{m-1}$. This contradicts Theorem \ref{thm:sunflowercodeversion}, and so $\odim(\S_\Delta)$ must be equal to $m$ as desired. 
\end{proof}

\begin{corollary}\label{cor:choice}
For any $n\ge 2$ and $1\le m\le \binom{n-1}{\lfloor (n-1)/2\rfloor}$, there exists an intersection complete code on $n$ neurons with $m+1$ maximal codewords, and open embedding dimension equal to  $m$.
\end{corollary}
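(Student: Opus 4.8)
The plan is to realize every code required by the corollary as $\S_\Delta$ for a suitably chosen simplicial complex $\Delta$ on $n-1$ vertices, invoking Theorem \ref{thm:SDelta}, with the sole exception of $m=1$, which I will dispatch by an explicit one-dimensional construction. Write $N \od \binom{n-1}{\lfloor(n-1)/2\rfloor}$, so that $m$ ranges over $1,\dots,N$. The key combinatorial input is Sperner's theorem, which guarantees that $N$ is the largest size of an antichain in the Boolean lattice $2^{[n-1]}$, and moreover that there is an antichain of every intermediate size $1,2,\dots,N$ (for instance, any $j$ of the $N$ subsets of $[n-1]$ of size $\lfloor(n-1)/2\rfloor$ form an antichain of size $j$).

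For the range $2 \le m \le N$, I would first observe that this case forces $n \ge 3$ (otherwise $N=1$). Fix an antichain $A \subseteq 2^{[n-1]}$ with $|A| = m$, and let $\Delta$ be the simplicial complex it generates, i.e.\ $\Delta = \bigcup_{F\in A} 2^F$. A routine check shows that the facets of $\Delta$ are precisely the members of $A$, so $\Delta$ has exactly $m$ facets; and because $|A| \ge 2$, the full set $[n-1]$ cannot be a member of the antichain $A$, hence $[n-1] \notin \Delta$ and $\Delta \subsetneq 2^{[n-1]}$. Applying Definition \ref{def:SDelta} and Theorem \ref{thm:SDelta} with $n-1$ in place of $n$, the code $\S_\Delta \subseteq 2^{[n]}$ is then intersection complete, has $m+1$ maximal codewords, and satisfies $\odim(\S_\Delta) = m$. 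This covers all $m$ with $2 \le m \le N$.

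For $m=1$ (the only admissible value when $n=2$, but one that must be handled for every $n\ge 2$), I would instead take the code $\C = \{\emptyset,\{1\},\{2\}\} \subseteq 2^{[n]}$, regarding $3,\dots,n$ as vertices belonging to no codeword. Then $\C$ is intersection complete since $\{1\}\cap\{2\} = \emptyset \in \C$, and its maximal codewords are exactly $\{1\}$ and $\{2\}$, so it has $m+1 = 2$ of them. It is realized in $\R^1$ by two disjoint open intervals together with $U_3 = \cdots = U_n = \emptyset$, giving $\odim(\C) \le 1$; and $\odim(\C) \ge 1$ because the only code realizable in $\R^0$ is $\{\emptyset\}$, while $\C$ has three codewords. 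Hence $\odim(\C) = 1 = m$, completing the proof.

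The only genuine subtlety, I expect, is bookkeeping: one must ensure that the generated complex $\Delta$ is a \emph{proper} subcomplex of $2^{[n-1]}$, so that Proposition \ref{prop:SDelta}/Theorem \ref{thm:SDelta} yields $m+1$ rather than $m$ maximal codewords, and one must keep the index shift ($n$ versus $n-1$) straight when quoting Definition \ref{def:SDelta}. The hypothesis $m\ge 2$ in Theorem \ref{thm:SDelta} is exactly why the case $m=1$ needs its own (trivial) argument. I anticipate no deeper obstacle.
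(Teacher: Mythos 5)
Your proposal is correct and follows essentially the same route as the paper: for $m\ge 2$ you take $m$ subsets of $[n-1]$ of size $\lfloor(n-1)/2\rfloor$ as the facets of $\Delta$ and invoke Theorem \ref{thm:SDelta}, exactly as the paper does, and you handle $m=1$ by a separate explicit code. The only (harmless) differences are cosmetic: the Sperner/antichain framing is not needed once you pick the middle layer, and your $m=1$ code $\{\emptyset,\{1\},\{2\}\}$ arguably matches the ``$m+1$ maximal codewords'' claim more literally than the paper's choice $\{\mathbf{1},\emptyset\}$.
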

\begin{proof}
For $m = 1$, the code $\{\mathbf{1}, \emptyset\}$ suffices. For $m \ge 2$ we apply Theorem \ref{thm:SDelta}. Among all $\binom{n-1}{\lfloor (n-1)/2\rfloor}$ subsets of $[n-1]$ with size $\lfloor (n-1)/2 \rfloor$, we may  select $m$. Letting $\Delta$ be the simplicial complex with these subsets as its facets, we see that $\S_\Delta$ is the desired code. 
\end{proof}

\begin{corollary}\label{cor:exponential}
There is a family of codes $\E_n\subseteq 2^{[n]}$ such that $\odim(\E_n)$ grows \edit{at least as fast as $\frac{2^{n-1}}{n}$.}
\end{corollary}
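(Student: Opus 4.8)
The plan is to read off this corollary directly from Corollary \ref{cor:choice} (equivalently Theorem \ref{thm:SDelta}) by taking $m$ as large as the corollary permits, namely the middle binomial coefficient, and then checking that this quantity is exponentially large in $n$. Concretely, for $n \ge 3$ I would let $\Delta_n \subseteq 2^{[n-1]}$ be the simplicial complex whose facets are all subsets of $[n-1]$ of size $\lfloor (n-1)/2 \rfloor$, so that $\Delta_n$ has exactly $m_n \od \binom{n-1}{\lfloor (n-1)/2\rfloor}$ facets, and define $\E_n \od \S_{\Delta_n} \subseteq 2^{[n]}$. Since $n-1 \ge 2$ forces $m_n \ge 2$, Theorem \ref{thm:SDelta} applies and gives that $\E_n$ is an intersection complete code on $n$ neurons with $\odim(\E_n) = m_n$. (For the finitely many small cases $n \le 2$ one may set $\E_n$ to be any fixed code, e.g.\ $\{\mathbf 1, \emptyset\}$, since only the growth rate of the family matters; alternatively one simply starts the family at $n = 3$.)

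It then remains to observe that $m_n = \binom{n-1}{\lfloor (n-1)/2\rfloor}$ grows exponentially in $n$. The cleanest way is to note that the largest of the $n$ binomial coefficients $\binom{n-1}{0}, \binom{n-1}{1}, \ldots, \binom{n-1}{n-1}$ is at least their average, so $m_n \ge 2^{n-1}/n$; since $2^{n-1}/n \to \infty$ at an exponential rate, so does $\odim(\E_n)$. If a sharper statement is wanted, one can instead invoke the standard central binomial estimate $\binom{2k}{k} \ge 4^k/(2k+1)$ (and the analogous bound in the odd case), which yields $m_n = \Theta\!\left(2^n/\sqrt{n}\,\right)$.

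I do not anticipate any genuine obstacle: all of the structural and dimension-theoretic work has already been done in Theorem \ref{thm:SDelta}, and what is left is the elementary remark that a middle binomial coefficient is exponentially large. The only points requiring a line of care are (i) verifying that $m_n \ge 2$ so that Theorem \ref{thm:SDelta} is applicable (which holds for all $n \ge 3$), and (ii) handling the trivial small-$n$ cases so that the family is defined for every $n$.
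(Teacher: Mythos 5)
Your proposal is correct and follows essentially the same route as the paper: the paper's proof invokes Corollary \ref{cor:choice} (which itself takes the facets to be all $\binom{n-1}{\lfloor(n-1)/2\rfloor}$ subsets of $[n-1]$ of size $\lfloor(n-1)/2\rfloor$ and applies Theorem \ref{thm:SDelta}) and then uses the same bound $\binom{n-1}{\lfloor(n-1)/2\rfloor}\ge 2^{n-1}/n$ to conclude exponential growth. You have merely unwound Corollary \ref{cor:choice} explicitly, which is fine.
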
 
\begin{proof}
By Corollary \ref{cor:choice}, we may choose $\E_n$ so that $\odim(\E_n) = \binom{n-1}{\lfloor (n-1)/2\rfloor}$. \edit{Observe that $n \binom{n-1}{\lfloor (n-1)/2\rfloor} \ge 2^{n-1}$ since there are no more than $\binom{n-1}{\lfloor (n-1)/2\rfloor}$ subsets of $[n-1]$ of size $i$ for $i=0,1,\ldots, n-1$. Thus } $\binom{n-1}{\lfloor (n-1)/2\rfloor} \ge \frac{2^{n-1}}{n}$, \edit{proving the result.}\end{proof}

Qualitatively, these results are very surprising. The codes $\S_\Delta$ are ``almost" simplicial complexes (we have added the single codeword $[n]$ to a simplicial complex), but their open embedding dimensions grow exponentially faster than that of any simplicial complex. Strikingly, these codes provide the first example of codes whose embedding dimension (open or closed) is larger than $n-1$. 

\begin{remark}
From the perspective of the neuroscience which motivates the study of convex codes, Corollary \ref{cor:exponential} has the following interpretation: theoretically, $n$ neurons may ``recognize" dimensions that are exponentially large in $n$. Whether such a phenomenon ever occurs in experimental data could be an interesting avenue of investigation. 
\end{remark}

\section{Flexible Sunflowers}\label{sec:flexible}

In this section our goal is to investigate $k$-flexible sunflowers of convex open sets. These are a generalization of sunflowers in which we allow petals to overlap outside the center of the sunflower, but no more than $k$ at a time. For sunflowers, we saw in Theorem \ref{thm:sunflower} that sampling a point in each petal and taking the convex hull always yielded a point in the center of the sunflower if we had enough petals relative to our ambient dimension. We will see that the same holds for $k$-flexible sunflowers, and the minimum number of petals needed is proportional to $k$, as well as the ambient dimension \edit{(see Theorem \ref{thm:flexible} below)}. Qualitatively, the more flexibility we allow in a sunflower, the larger the number of petals we need to sample in order to guarantee that the convex hull of the sampled points intersects the center of the sunflower. 

\edit{Our proofs rely heavily on the assumption that we are working with open sets, and the results of this section do not apply to sunflowers of closed convex sets. Indeed, taking $n$ line segments in the positive quadrant of $\R^2$ which meet at the origin yields a closed sunflower for which the conclusion of Theorem \ref{thm:flexible} does not hold.}

To begin, let us recall the definition of a $k$-flexible sunflower. 

\begin{restatedefinition}{\ref{def:flexiblesunflower}}
Let $\U = \{U_1,\ldots, U_n\}$ be a collection of convex sets in $\R^d$ and let $\C = \code(\U)$. The collection $\U$ is called a \emph{$k$-flexible sunflower} if $[n]\in\C$, and all other codewords have weight at most $k$. The $U_i$ are called \emph{petals} and $U_{[n]}$ is called the \emph{center} of $\U$. \end{restatedefinition}

We start with a family of examples. For each $d\ge 2$ and $k \ge 1$, Proposition \ref{prop:fails} describes a $k$-flexible sunflower $\U$ in $\R^d$ with $dk$ petals in which we can sample points from each petal whose convex hull does not contain a point in the center of $\U$.

\begin{proposition}\label{prop:fails}
For all $d\ge 2$ and $k\ge1$, there exists a\edit{n open} $k$-flexible sunflower $\U =\{U_1,\ldots, U_n\}$ in $\R^d$ with $n=dk$, and points $p_1,\ldots, p_n$ with $p_i\in U_i$, such that $\conv\{p_1,\ldots, p_n\}$ does not contain a point in the center of $\U$.
\end{proposition}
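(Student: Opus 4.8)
The plan is to generalize the construction sketched after Theorem~\ref{thm:sunflower}: the ``rectangular cylinder about each coordinate axis'' example is the $k=1$ case, and I would build the $k$-flexible analogue by bundling $k$ cylinders along each of the $d$ coordinate axes, so that along a single axis $k$ petals overlap (giving codewords of weight $k$) but petals associated to different axes meet only at the center. Concretely, fix a small cube $Q = [-1,1]^d$ to be the center. For each coordinate direction $e_1,\dots,e_d$ and each $j\in\{1,\dots,k\}$, define a petal $U_{\ell,j}$ to be an open slab that contains $Q$, is ``thin'' in all coordinates except the $\ell$-th, and extends to $+\infty$ in the $e_\ell$ direction; the thin cross-sections for different $j$ (with $\ell$ fixed) should be nested or overlapping so that all $k$ of them, together, meet in a region slightly larger than $Q$ but with no point outside $Q$ lying in petals from two distinct axis-directions. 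This gives $n = dk$ petals.

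The key steps, in order: (1) Write down the explicit slabs $U_{\ell,j}$ and verify $\bigcap_{\ell,j} U_{\ell,j} = U_{[n]}$ is nonempty (it contains $Q$), so $[n]$ is a codeword. (2) Verify that any codeword not equal to $[n]$ has weight at most $k$: a point outside the center lies in some petal $U_{\ell,j}$ only where that petal sticks out along $e_\ell$, and by the thinness/placement of the slabs such a point cannot simultaneously lie in any petal $U_{\ell',j'}$ with $\ell'\neq\ell$; hence every non-central atom is covered by petals from a single axis-direction, of which there are only $k$. This is the step that forces the precise geometry of the slabs and is where I expect to spend the most care. (3) Choose the witness points: for each $\ell$ and $j$, pick $p_{\ell,j}$ deep inside the ``tail'' of $U_{\ell,j}$, i.e.\ with $\ell$-th coordinate equal to some large $M>0$ and all other coordinates $0$ (or tiny). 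Then $\conv\{p_{\ell,j}\}$ lies in the region where every point has all coordinates $\ge 0$ and at least one coordinate large; choosing $M$ large enough, every point of this convex hull has some coordinate $>1$, hence lies outside $Q = U_{[n]}$. That gives the desired failure.

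The main obstacle is step (2): making the slabs thin enough in the ``off'' directions that petals from different axes genuinely fail to overlap outside the center, while still keeping all $k$ petals along a fixed axis mutually overlapping (so that the maximal non-central codewords really do have weight $k$, matching the $k$-flexibility hypothesis exactly rather than something smaller). A clean way to handle this is to take $U_{\ell,j} \od \{x\in\R^d : x_\ell > -1,\ |x_i| < \varepsilon_j \text{ for } i\neq \ell\}$ with $0 < \varepsilon_1 < \cdots < \varepsilon_k$ all less than $1$; then for fixed $\ell$ all $U_{\ell,j}$ contain $U_{\ell,1}$-type behavior and overlap, and a point in two petals $U_{\ell,j}, U_{\ell',j'}$ with $\ell\neq\ell'$ must satisfy $|x_{\ell'}|<\varepsilon_{j}<1$ and $|x_\ell|<\varepsilon_{j'}<1$, which combined with the remaining slab constraints pins $x$ into the bounded box $\prod_i(-\varepsilon,\varepsilon)$-ish region forming the center. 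One then just checks the center is exactly $\bigcap U_{\ell,j}$ and is bounded, and that the points $p_{\ell,j} = M e_\ell$ for $M$ large work. I would also remark that this is tight against Theorem~\ref{thm:flexible}, since that theorem needs $n\ge dk+1$ and here $n=dk$.
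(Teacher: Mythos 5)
Your overall strategy is the same as the paper's (a coordinate-direction sunflower with $k$ petals bundled along each axis, and witness points far out along the axes), but the concrete construction you give to settle step (2) does not actually produce a $k$-flexible sunflower, and that code-theoretic condition is the crux of the statement. With your slabs $U_{\ell,j}=\{x\in\R^d : x_\ell>-1,\ |x_i|<\varepsilon_j \text{ for } i\neq\ell\}$ and $0<\varepsilon_1<\cdots<\varepsilon_k<1$, the center is the box $\{x : |x_i|<\varepsilon_1 \text{ for all } i\}$. Now take $k\ge 2$ and the point $x=(t,0,\ldots,0)$ with $\varepsilon_1<t<\varepsilon_2$. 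It lies in $U_{1,j}$ for every $j$, and also in $U_{\ell,j}$ for every $\ell\neq 1$ and every $j\ge 2$, but not in $U_{\ell,1}$ for $\ell\neq 1$. So its codeword has weight $k+(d-1)(k-1)>k$ and is not $[n]$: the arrangement is not $k$-flexible. The flaw is in your claim that a point in petals from two different axes is ``pinned into the bounded box \ldots forming the center'': being confined to a small box is not the same as lying in the center $(-\varepsilon_1,\varepsilon_1)^d$, precisely because the transverse widths $\varepsilon_j$ differ across the $k$ copies, so a point just outside the narrowest slab of one axis is still inside the wider slabs of all the other axes.

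The repair is to keep all transverse cross-sections the same width. The paper does this in the bluntest way: it takes the standard $d$-petal coordinate sunflower (hypercube plus a long segment along each $e_\ell$) and simply duplicates each petal $k$ times; identical sets are permitted in a realization, and then any point lying in petals attached to two different axes has all coordinates of absolute value less than the common width, hence lies in every petal, so every non-central codeword is confined to a single axis group and has weight at most $k$. If you want genuinely distinct petals, the paper's follow-up remark skews the $k$ copies along an axis slightly in a fixed transverse direction; alternatively you could keep equal widths and vary only the lengths of the tails in the $-e_\ell$ direction. Either variant preserves the key property your version loses. Your step (3) is fine once the construction is fixed: with $p_{\ell,j}=Me_\ell$ and $M$ large, every point of the convex hull has nonnegative coordinates summing to $M$, hence some coordinate exceeding the center's width, so the hull misses the center; and your closing observation about tightness against Theorem~\ref{thm:flexible} is exactly the intended point.
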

\begin{proof}
For $k=1$, we begin with an open unit hypercube in $\R^d$ centered at the origin, and let $U_i$ be the Minkowski sum of this hypercube with a line segment from the origin to a large positive multiple of $e_i$. We can see that the $U_i$ form a $d$-petal sunflower, and our desired $p_i$ are just the large multiples of $e_i$. 

For $k\ge 2$, we can take the sunflower described above and duplicate each of the $d$ petals $k$ times. This creates a $k$-flexible sunflower, and the same sampling of points (with each duplicated $k$ times) satisfies the proposition.
\end{proof}

\begin{remark}
One might argue that the construction above is unsatisfying. Should we not stipulate that petals diverge in different directions, or at least are distinct? It turns out we can address these concerns. Start with the usual coordinate-direction sunflower whose center is a unit hypercube, as described above. If $k=1$ we are done. Otherwise, choose a cyclic permutation $\sigma$ of $[d]$, for example $i\mapsto i+1$ mod $d$. Then, we can duplicate each petal in our coordinate-direction sunflower $k$ times, but when duplicating the $i$-th petal we ``skew" it slightly in the direction of $-e_{\sigma(i)}$. If each duplicated petal is skewed a different amount, our petals will diverge from one another. As long as we skew a small enough amount, this yields a $k$-flexible sunflower from which we can sample the desired $p_i$.

This construction is illustrated below for $k=3$ and $d=2$:\[
\includegraphics[width=11em]{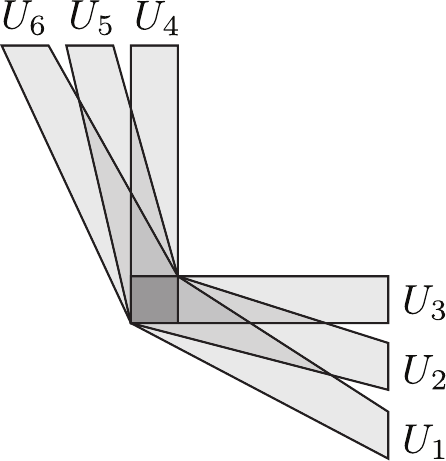}
\]
\end{remark}

We now turn our attention to proving Theorem \ref{thm:flexible}. We will see that some technical lemmas regarding $k$-flexible sunflowers together with Tverberg's theorem \edit{are enough to prove the theorem.} We start by showing that the center of every open $k$-flexible sunflower admits a set of supporting halfspaces \edit{with two properties: first, the common intersection of the halfspaces closely approximates the center (in the sense that the common intersection is contained in the closure of the center)}, and second, any one of \edit{these halfspaces} contains all but at most $k$ of the petals.

\begin{definition}\label{def:wellsupported}
Let $\U=\{U_1,\ldots, U_n\}$ be a $k$-flexible sunflower in $\R^d$ with center $U$. A point $b\in\partial U$ is called \emph{well-supported} if it is not in the boundary of $U_i\cap \partial U$ (considered as a subset of the topological space $\partial U$) for any $i\in[n]$.  
\end{definition}

\begin{lemma}\label{lem:densesupport}
Let $\U=\{U_1,\ldots, U_n\}$ be a $k$-flexible sunflower in $\R^d$ with center $U$. The set of well-supported points is dense in $\partial U$. 
\end{lemma}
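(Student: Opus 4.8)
The plan is to show that the ``bad'' set $B \od \bigcup_{i \in [n]} \partial_{\partial U}(U_i \cap \partial U)$ — the union over all petals of the relative boundary of $U_i \cap \partial U$ inside the topological space $\partial U$ — is a meager (or measure-zero) subset of $\partial U$, and in particular has dense complement. Since a finite union of nowhere dense sets is nowhere dense, it suffices to prove that for each fixed $i \in [n]$, the set $\partial_{\partial U}(U_i \cap \partial U)$ is nowhere dense in $\partial U$. The key geometric input is that $U$ is an open convex set in $\R^d$, so $\partial U$ is a topological $(d-1)$-manifold (possibly with some flat or lower-dimensional pieces if $U$ is not full-dimensional, but we can reduce to the full-dimensional case by passing to the affine hull of $U$, or simply note $\partial U$ has a well-behaved measure structure), and $U_i \cap \partial U$ is the intersection of a convex set with $\partial U$.

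First I would establish the topological setup: $\partial U$ is homeomorphic to a sphere $S^{d-1}$ (or $\R^{d-1}$ if $U$ is unbounded, or a lower-dimensional analogue), via radial projection from an interior point of $U$; what matters is that $\partial U$ carries a notion of $(d-1)$-dimensional measure and that convex sets pull back to convex-like sets under radial projection. Then, for a fixed petal $U_i$, consider $S_i \od U_i \cap \partial U$. The boundary of this set \emph{within} $\partial U$ consists of points $b \in \partial U$ that are limits of points in $S_i$ and also limits of points in $\partial U \setminus S_i$. I claim each such $b$ lies on $\partial U_i$ (the boundary of the petal in $\R^d$): indeed if $b$ were in the interior of $U_i$, a whole neighborhood of $b$ in $\partial U$ would lie in $U_i$, contradicting $b$ being a relative-boundary point; and $b$ cannot be exterior to $U_i$ since $b$ is a limit of points of $S_i \subseteq U_i$ and $U_i^c$-interior is open. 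So $\partial_{\partial U}(U_i \cap \partial U) \subseteq \partial U_i \cap \partial U$.

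It therefore suffices to show $\partial U_i \cap \partial U$ is nowhere dense in $\partial U$. Suppose not: then $\partial U_i \cap \partial U$ contains a relatively open subset $W$ of $\partial U$. Pick $b \in W$ and a supporting hyperplane $H$ of $U$ at $b$, so that $U \subseteq H^{\le}$ locally (in fact globally, by convexity), and $b \in \partial U_i$ as well. Now I would argue that $W \subseteq \partial U_i$ forces $U_i$ to ``hug'' the boundary of $U$ along $W$, which — combined with $U_i$ being convex and open and meeting $U$ (since $U_{[n]} \ne \emptyset$) — yields a contradiction: a convex open set whose boundary contains a relatively open patch $W$ of $\partial U$ must, near $W$, lie entirely on one side, and by convexity of both $U$ and $U_i$ one shows $U_i$ cannot simultaneously be open, contain points of $U$, and have its boundary agree with $\partial U$ on a full-dimensional patch unless $U_i \supseteq$ a neighborhood in $\partial U$ of interior-$U$ points, which is absurd. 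The cleanest way to run this is probably the measure-theoretic route: $\partial U$ has locally finite $(d-1)$-dimensional Hausdorff measure, and $\partial U_i \cap \partial U$, being contained in the boundary of a convex body intersected with the boundary of another, has $(d-1)$-measure zero in $\partial U$ unless the two convex sets share a common supporting hyperplane along that patch — and even then the shared flat face is itself nowhere dense in the sphere $\partial U$ as long as $U$ has at most finitely many... no, $U$ may have infinitely many flat faces, so I should instead observe: a relatively open patch of $\partial U$ cannot be flat (a sphere-like boundary has no open flat patch) unless $U$ is not strictly convex, but even a non-strictly-convex $U$ can only have countably many maximal flat faces, each of which is nowhere dense; and if the patch $W$ is not contained in a single flat face it must contain points of strict convexity near which $\partial U_i$ cannot coincide with $\partial U$ on an open set.

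The main obstacle I anticipate is handling the case where $U$ itself is not strictly convex (has flat portions of its boundary) and is possibly unbounded or not full-dimensional; in these degenerate cases ``$\partial U$ is a sphere'' fails and one must argue more carefully that $\partial U$ still decomposes into a countable union of nowhere-dense flat pieces plus a ``curved'' part on which no convex set's boundary can contain an open patch. A robust fix is to prove the lemma first assuming $U$ is bounded and full-dimensional (the generic and most important case for the application to Theorem~\ref{thm:flexible}, and one can reduce to it: intersecting the whole sunflower with a large ball preserves the $k$-flexible sunflower structure and the center, and one can work in $\spann(U - u_0)$ for $u_0 \in U$), and then dispatch the reduction. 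I expect the author's proof to take exactly this kind of approach — reduce to $U$ a convex body, use that $\partial U$ has a $(d-1)$-manifold-like structure, and show each $U_i \cap \partial U$ has nowhere-dense relative boundary because a convex set can't agree with $\partial U$ to first order on an open patch — so the write-up should be short once the reduction is noted.
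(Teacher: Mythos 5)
Your overall frame (the bad set is a finite union of relative boundaries, so it suffices to show each $\partial_{\partial U}(U_i\cap\partial U)$ is nowhere dense in $\partial U$) matches the paper, but the step you reduce to is false, and the machinery you invoke to attack it is both unnecessary and never brought to a conclusion. The containment $\partial_{\partial U}(U_i\cap\partial U)\subseteq \partial U_i\cap\partial U$ is correct but too lossy: $\partial U_i\cap\partial U$ need \emph{not} be nowhere dense in $\partial U$. For a concrete counterexample inside the sunflower setting, take $d=2$, $U_1=(0,1)\times(0,2)$ and $U_2=(0,1)\times(-1,1)$; the center is the open unit square $U=(0,1)\times(0,1)$ (note $U\subseteq U_1$, so this is not excluded by the sunflower structure --- in fact the center always satisfies $U\subseteq U_i$, a fact your write-up does not use). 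Here $\partial U_1\cap\partial U$ contains three entire edges of the square, hence has nonempty interior in $\partial U$, so no argument --- measure-theoretic, via strict convexity, or via flat faces --- can show it is nowhere dense. (The set the lemma actually cares about, the relative boundary of $U_1\cap\partial U=(0,1)\times\{1\}$ in $\partial U$, is just two corner points, which is fine.) Your subsequent discussion of Hausdorff measure, countably many flat faces, and reductions to the bounded full-dimensional case is also incomplete on its own terms (a countable union of nowhere dense sets is only meager, and the strictly convex versus flat dichotomy is left unresolved), so the argument does not close.

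The missing observation, which is all the paper uses, is that each petal $U_i$ is \emph{open} in $\R^d$, so $U_i\cap\partial U$ is a relatively open subset of the topological space $\partial U$. The boundary of an open subset of any topological space is closed and has empty interior, hence is nowhere dense; therefore its complement is open and dense in $\partial U$, and a finite intersection of dense open sets is dense. This needs no convexity of the petals, no manifold or measure structure on $\partial U$, and no reduction to the bounded or full-dimensional case.
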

\begin{proof}
Consider the sets $U_i\cap \partial U$ in $\partial U$. For each of these sets, the set of non-boundary points in $\partial U$ is dense and open when considered as a subset of $\partial U$. The set of well-supported points is just the intersection of non-boundary points of $U_i\cap \partial U$ in $\partial U$for all $i$, and a finite intersection of dense open sets is again open and dense. Thus the well-supported points are dense in $\partial U$. 
\end{proof}

\begin{lemma}\label{lem:densecut}
Let $U\subseteq \R^d$ be a convex open set. Let $B$ be a dense subset of the boundary of $U$, and for each $b\in B$ let $H_b$ be a supporting hyperplane to $U$ at $b$. Then $\bigcap_{b\in B} H_b^>$ is contained in $\overline U$. 
\end{lemma}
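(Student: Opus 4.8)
The plan is to show the contrapositive: if a point $x$ lies outside $\overline{U}$, then $x$ fails to lie in $H_b^>$ for some $b\in B$, i.e.\ $x\in H_b^{\le}$. First I would use the separation theorem: since $x\notin\overline U$ and $\overline U$ is closed and convex, there is a hyperplane $H$ strictly separating $x$ from $\overline U$, so $U\subseteq H^>$ and $x\in H^<$. Let $b_0$ be the point of $\overline U$ nearest to $x$ (which exists and is unique by convexity and closedness); then $b_0\in\partial U$ and the hyperplane $H_0$ through $b_0$ perpendicular to the segment $\overline{xb_0}$ is a supporting hyperplane to $U$ at $b_0$ with $x$ strictly on the negative side, and moreover every point of $\overline U$ lies on the nonnegative side of $H_0$ with $x$ at positive distance from $H_0$.

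The subtlety — and the step I expect to be the main obstacle — is that $b_0$ itself may not lie in $B$, and the supporting hyperplane $H_{b}$ we are handed for a given $b\in B$ need not be the "good" one $H_0$; at a non-smooth boundary point there are many supporting hyperplanes, and an adversarial choice of $H_b$ could conceivably have $x$ on its positive side even when $b$ is near $b_0$. To handle this I would argue by an approximation/limiting argument. Pick a sequence $b_j\in B$ with $b_j\to b_0$ (possible by density of $B$). For each $j$ we have a supporting hyperplane $H_{b_j}$ to $U$ at $b_j$; write it as $\{y : \langle u_j, y\rangle = \langle u_j, b_j\rangle\}$ with $\|u_j\|=1$ and $U\subseteq H_{b_j}^{>} = \{y:\langle u_j,y\rangle > \langle u_j,b_j\rangle\}$ after choosing the orientation so $U$ is on the positive (strict) side — note since $U$ is open, $U\subseteq H_{b_j}^>$ rather than merely $H_{b_j}^\ge$. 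Passing to a subsequence, $u_j\to u$ with $\|u\|=1$, and then $\{y:\langle u,y\rangle = \langle u,b_0\rangle\}$ is a supporting hyperplane to $\overline U$ at $b_0$: indeed for any fixed $z\in U$ we have $\langle u_j, z\rangle \ge \langle u_j, b_j\rangle$ for all $j$, so in the limit $\langle u, z\rangle \ge \langle u, b_0\rangle$, hence $\langle u, y\rangle \ge \langle u, b_0\rangle$ for all $y\in\overline U$.

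The key remaining point is to show $\langle u, b_0\rangle > \langle u, x\rangle$, so that $x$ strictly fails to be on the positive side of at least one of the $H_{b_j}$. For this I would use the good hyperplane $H_0$: since $x$ is at distance $\delta>0$ from $\overline U$ and $b_0$ is the nearest point, every $y\in\overline U$ satisfies $\langle \tfrac{b_0-x}{\delta}, y - b_0\rangle \ge 0$, i.e.\ the unit vector $v := (b_0-x)/\delta$ gives a supporting functional with $\langle v, b_0\rangle - \langle v, x\rangle = \delta$. Now the limiting supporting hyperplane direction $u$ at $b_0$ need not equal $v$, but both are outer normals to $\overline U$ at $b_0$; the cleaner route is to instead run the whole argument keeping track of $x$ directly: since $U\subseteq H_{b_j}^>$ and $x$ has positive distance from $\overline{U}\supseteq \{b_j\}$, I claim that for $b_j$ close enough to $b_0$ we in fact get $\langle u_j, x\rangle < \langle u_j, b_j\rangle$, because $\langle u_j, x - b_j\rangle \to \langle u, x - b_0\rangle$ and, using that $x+\delta v = b_0$ minimizes distance, any outer normal $u$ at $b_0$ satisfies $\langle u, x - b_0\rangle = -\delta\langle u, v\rangle < 0$ (the inner product $\langle u,v\rangle$ is positive since $v$ is itself an outer normal and the normal cone of a convex body at a boundary point is contained in an open halfspace through $-$the segment to any interior point; more simply, if $\langle u,v\rangle \le 0$ then $x' = x - \varepsilon u$ for small $\varepsilon>0$ would be strictly closer to $b_0$ than... ) — I would phrase this last inequality as: for any $y\in \overline U$ with $\langle u, y\rangle \ge \langle u, b_0\rangle$ and in particular $y = b_0 + t v\in\overline U$ is false for $t<0$, hence $\langle u, v\rangle \ge 0$, and it is in fact strictly positive because taking $y$ ranging over a neighborhood of $b_0$ in $\overline U$ and using that $x\notin\overline U$ forces $u\ne -v$ impossible equality cases. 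The honest and simplest version: choose $b_j\to b_0$; then $\liminf_j(\langle u_j, b_j\rangle - \langle u_j, x\rangle) = \langle u, b_0 - x\rangle$; and $\langle u, b_0 - x\rangle = \delta\langle u, v\rangle \ge 0$ with equality only if $u\perp v$, in which case $b_0 + s u\in \overline U$ is prevented for all small $s$ (as $u$ is a normal) while the nearest-point property is unaffected — a contradiction with $\overline U$ being full-dimensional near $b_0$ unless... Here I would just invoke that the supporting hyperplane $\{y:\langle u,y\rangle=\langle u,b_0\rangle\}$ cannot contain the segment $\overline{x b_0}$ in its interior side configuration, concluding $\langle u, b_0\rangle > \langle u, x\rangle$ strictly, hence for large $j$, $\langle u_j, x\rangle < \langle u_j, b_j\rangle$, i.e.\ $x\notin H_{b_j}^>$, completing the contrapositive. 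I expect the write-up to spend most of its effort making this last strict-inequality step rigorous, likely by working with the nearest-point map and the vector $v$ throughout rather than an arbitrary limiting normal $u$.
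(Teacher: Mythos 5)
Your overall contrapositive framing is fine, but the core step of your argument --- that the limiting inner normal $u$ at the nearest point $b_0$ satisfies $\langle u, b_0 - x\rangle > 0$ --- is not just unproven, it is false in general, and with it the whole nearest-point strategy breaks down. The problem is exactly the one you sensed: at a non-smooth $b_0$ the normal cone can be wide, and two normal directions at $b_0$ can make a right or even obtuse angle. Concretely, let $U = \{(y_1,y_2)\in\R^2 : y_2 > 10|y_1|\}$ and $x = (10,-1)$. The nearest point of $\overline U$ to $x$ is the apex $b_0 = 0$, and $v = (b_0-x)/\delta$ is proportional to $(-10,1)$. If your sequence $b_j \in B$ happens to approach $b_0$ along the \emph{left} edge, then each $b_j$ is a smooth boundary point whose only supporting line is the left edge's line, with inner normal $u_j$ proportional to $(10,1)$; one computes $\langle u_j, x - b_j\rangle = 99/\sqrt{101} > 0$ for every $j$, so $x \in H_{b_j}^>$ for all $j$, and in the limit $\langle u, v\rangle < 0$. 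So neither the strict inequality you want, nor even its non-strict version, follows from proximity of $b_j$ to $b_0$; your closing appeal to ``the supporting hyperplane cannot contain the segment $\overline{xb_0}$'' has no force here. (Even in milder cases, e.g.\ a square with $x$ level with a corner, the liminf can be exactly $0$, and a liminf of $0$ does not produce any single $j$ with $\langle u_j, x - b_j\rangle \le 0$.)

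The fix is to choose $b$ by \emph{visibility from $x$} rather than by proximity to the nearest point, which is what the paper does: since $x \notin \overline U$, the set $\interior(\conv(\{x\}\cup U)) \cap \partial U$ is a nonempty relatively open subset of $\partial U$, so by density it contains some $b \in B$; for such a $b$ the segment $\overline{xb}$ extends past $b$ to a point $q \in U$. Then for \emph{any} supporting hyperplane $H_b$ at $b$ we have $q \in U \subseteq H_b^>$ while $b \in H_b$, and since $b$ lies strictly between $q$ and $x$, linearity forces $x \in H_b^<$, so $x \notin H_b^>$. This one-line convexity argument avoids limits of normals entirely and is insensitive to which supporting hyperplane is handed to you at $b$; in your approach that insensitivity is precisely what fails. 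In the wedge example, note that the boundary points excluding $x$ are those on the \emph{right} edge --- the ones visible from $x$ --- not the ones nearest to $b_0$ on the left edge, which is why any correct repair of your argument ends up selecting $b$ essentially as the paper does.
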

\begin{proof}
Consider any point $p\notin \overline U$. Since \edit{$U$ is open and} $p$ lies a positive distance away from $U$, the intersection of $\interior(\conv(\{p\} \cup U))$ with $\partial U$ is a relatively open \edit{nonempty} subset of $\partial U$, and thus contains some $b\in B$ \edit{because $B$ is dense}. Since $\interior(\conv(\{p\}\cup U))$ is open, the line segment $\overline{pb}$ can be extended so that it ends at a point $q\in U$, as shown in the following figure.\[
\includegraphics[width=18em]{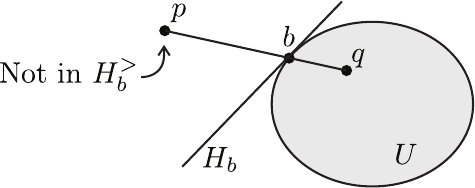}
\]Now, consider the supporting hyperplane $H_b$. We have $U\subseteq H_b^>$. In particular, $H_b^>$ contains $q$ but not $b$. Since $b$ lies between $q$ and $p$, we see that $H_b^>$ does not contain $p$. Thus $p\notin \bigcap_{b\in B} H_b^>$ and the lemma follows.  
\end{proof}

\begin{lemma}\label{lem:goodhalfspaces}
Let $\U=\{U_1,\ldots, U_n\}$ be a\edit{n open} $k$-flexible sunflower in $\R^d$ with center $U$, and let $b\in \partial U$ be well-supported. Let $H_b$ be a supporting \edit{hyperplane} for $U$ at $b$. Then $U_i\subseteq H_b^>$ for all $i\in [n]$ such that $b\notin U_i$. \edit{In particular, $H_b^>$ contains all but at most $k$ petals of $\U$.}
\end{lemma}
\begin{proof}
Suppose not, so that there exists \edit{$i\in[n]$ such that $b\notin U_i$ and} $U_i$ is not contained in $H_b^>$. Since $U_i$ is open, we may assume that there exists a point $p\in U_i$ strictly on the negative side of $H_b$. Then choose any point $q\in U$, and consider the line segment $\overline{qb}$. All points on this line segment other than $b$ lie in $U$. For each $r\in \overline{qb}$ with $r\neq b$, note that the line segment $\overline{pr}$ is contained in $U_i$. \edit{Moreover, since $U$ is convex and $q$ lies in $U$ while $p$ lies outside of $U$, this line segment intersects the boundary of $U$ in a unique point.} The set of these intersection points forms a subset of $U_i\cap \partial U$ whose closure contains $b$. This is illustrated in the figure below, with the points in $U_i\cap \partial U$ converging to $b$ shown in the bold curved line segment.
\[
\includegraphics[width=12em]{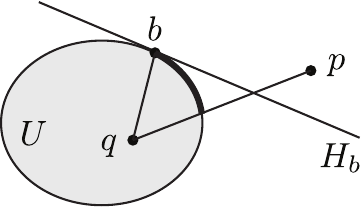}
\]
But since $b\notin U_i$, this implies that $b$ is a boundary point of $U_i\cap \partial U$ in $\partial U$. This contradicts the fact that $b$ is well-supported, \edit{ and so $U_i\subseteq H_b^>$ whenever $b\notin U_i$. Since $\U$ is $k$-flexible, $b$ belongs to no more than $k$ petals of $\U$, and thus $H_b^>$ contains all but at most $k$ petals.} 
\end{proof}

Finally, we recall Tverberg's theorem. After stating this theorem, we are ready to prove Theorem \ref{thm:flexible}.

\begin{theorem}[Tverberg's theorem] \label{thm:tverberg}
Let $d\ge1$, $r\ge 2$, and $n = (d+1)(r-1)+1$. For any  set of \edit{$n$} points $P = \{p_1,\ldots, p_n\}$ in $\R^d$, there is a partition of $P$ into $r$ parts $P_1,\ldots, P_r$ such that $\bigcap_{i=1}^r \conv(P_i) \neq \emptyset$. 
\end{theorem}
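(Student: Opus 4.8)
The plan is to prove Tverberg's theorem via Sarkaria's tensor trick, reducing it to the Colorful Carath\'eodory theorem. First I would set up the reduction. Embed each point $p_i\in\R^d$ as $\hat p_i=(p_i,1)\in\R^{d+1}$, and fix vectors $v_1,\dots,v_r\in\R^{r-1}$ that are the vertices of a simplex centered at the origin, so that $\sum_{j=1}^r v_j=0$ and every $r-1$ of them are linearly independent. Work in the tensor product $W=\R^{d+1}\otimes\R^{r-1}$, which has dimension $(d+1)(r-1)=n-1$. For each $i\in[n]$ define the color class $X_i=\{\hat p_i\otimes v_j : j\in[r]\}\subseteq W$. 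Since $\frac1r\sum_j \hat p_i\otimes v_j=\hat p_i\otimes\bigl(\frac1r\sum_j v_j\bigr)=\hat p_i\otimes 0=0$, the origin lies in $\conv(X_i)$ for every $i$.

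Next, since $\dim W=n-1$ and we have exactly $n$ color classes $X_1,\dots,X_n$, each containing $0$ in its convex hull, the Colorful Carath\'eodory theorem yields a rainbow selection: a function $j\colon[n]\to[r]$ and coefficients $\lambda_i\ge 0$ with $\sum_i\lambda_i=1$ such that $\sum_{i=1}^n \lambda_i\,(\hat p_i\otimes v_{j(i)})=0$ in $W$. Define the partition by $P_j=\{p_i : j(i)=j\}$ for $j\in[r]$. Grouping the sum by color gives $\sum_{j=1}^r w_j\otimes v_j=0$, where $w_j:=\sum_{i:\,j(i)=j}\lambda_i\hat p_i\in\R^{d+1}$. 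Because the only linear dependence among $v_1,\dots,v_r$ is the all-ones one, applying each coordinate functional of $\R^{d+1}$ to this relation forces $w_1=\cdots=w_r=:w$. Writing $w=(q,t)$ with $q\in\R^d$ and $t\in\R$, the last coordinate of $w_j$ is $\sum_{i:\,j(i)=j}\lambda_i$, so that sum equals $t$ for every $j$; summing over $j$ gives $rt=\sum_i\lambda_i=1$, hence $t=1/r>0$. In particular each part $P_j$ is nonempty, and $\sum_{i:\,j(i)=j}(r\lambda_i)\,p_i=rq$ with $\sum_{i:\,j(i)=j}r\lambda_i=1$ and $r\lambda_i\ge 0$; thus $rq\in\conv(P_j)$ for every $j$, so $rq\in\bigcap_{j=1}^r\conv(P_j)\neq\emptyset$.

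This reduces everything to the Colorful Carath\'eodory theorem, which I would include a proof of for completeness via Б\'ar\'any's nearest-point argument. Among the finitely many rainbow selections $\{x_1,\dots,x_n\}$ (one $x_i\in X_i$), choose one whose convex hull $S$ is at minimum Euclidean distance from $0$. If $0\in S$ we are done; otherwise let $p$ be the point of $S$ nearest $0$ and let $h=\{y:\langle p,y\rangle=|p|^2\}$, which strictly separates $0$ from $S$. The point $p$ lies in a proper face of the simplex $S$, so there is an index $\ell$ with $x_\ell$ off that face and $p\in\conv\{x_i:i\neq\ell\}$. Since $0\in\conv(X_\ell)$ while $0$ lies strictly on the side $\langle p,\cdot\rangle<|p|^2$ of $h$, the class $X_\ell$ cannot lie entirely in the closed opposite halfspace, so it contains a point $x_\ell'$ strictly on the same side as $0$. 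Replacing $x_\ell$ by $x_\ell'$ yields a rainbow selection whose convex hull contains the segment $[p,x_\ell']$, and since $\langle p,x_\ell'\rangle<|p|^2$, moving from $p$ toward $x_\ell'$ strictly decreases the distance to $0$, contradicting minimality.

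The step requiring the most care is the passage from the tensor relation $\sum_j w_j\otimes v_j=0$ to $w_1=\cdots=w_r$, which is exactly where the choice of the $v_j$ (centered simplex vertices with every $r-1$ of them independent) is used, together with tracking why the scalar $t$ must come out strictly positive. That positivity is what converts ``the origin lies in a colorful hull in tensor space'' into ``honest convex combinations with total weight $1/r$ in each part,'' hence into a genuine Tverberg partition with all parts nonempty. The reduction itself is bookkeeping once the setup is fixed; the only genuinely geometric input is the nearest-point argument for Colorful Carath\'eodory.
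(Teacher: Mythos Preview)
The paper does not actually prove Tverberg's theorem; it merely recalls the statement as a classical tool and then invokes it inside the proof of Theorem~\ref{thm:flexible}. Your proposal therefore goes beyond what the paper does by supplying a full proof, and the route you take---Sarkaria's tensor trick reducing to the Colorful Carath\'eodory theorem, followed by B\'ar\'any's nearest-point argument for the latter---is one of the standard modern proofs. Each step is sound: the centered-simplex choice of $v_1,\dots,v_r$ ensures the only linear relation among them is the all-ones relation, which is exactly what forces $w_1=\cdots=w_r$ after applying coordinate functionals to $\sum_j w_j\otimes v_j=0$; and tracking the last coordinate to get $t=1/r>0$ is precisely what guarantees each part $P_j$ carries positive total weight, so the resulting convex combinations are honest and the parts are nonempty. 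The nearest-point swap argument for Colorful Carath\'eodory is also correct as written (the hyperplane $h$ weakly supports $S$ at $p$ and strictly separates $0$, which is all that is needed). There is nothing to fix; you have simply proved more than the paper set out to.
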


\begin{restatetheorem}{\ref{thm:flexible}}
Let $\U = \{U_1,\ldots, U_n\}$ be an open $k$-flexible sunflower in $\R^d$. Suppose that $n\ge dk+1$, and for each $i\in[n]$ let $p_i\in U_i$. Then $\conv\{p_1,\ldots, p_n\}$ contains a point in the center of $\U$. Moreover, if $d\ge 2$ this result may fail when $n < dk+1$. 
\end{restatetheorem}
\begin{proof}
It suffices to prove the first statement for $n=dk+1$. Let $U$ denote the center of $\U$. Suppose for contradiction that the theorem does not hold, so that $\conv\{p_1,\ldots, p_n\}$ does not contain a point in $U$. Since the $U_i$ are open, we may \edit{uniformly translate the $p_i$ away from $U$ by a small positive distance}, and choose a separating hyperplane  $H$ between $\conv\{p_1,\ldots, p_n\}$ and $U$ such that $H$ does not contain any boundary point of $U$. Moreover, we can replace each $p_i$ by the intersection of the line segment $\overline{p_ip}$ with $H$, so that all  $p_i$ lie inside $H$.

Now, $H$ has dimension $d-1$, so we may apply Tverberg's theorem to our points $p_i$ with $r=k+1$. We obtain a partition $P_1,\ldots, P_{k+1}$ such that $\bigcap_{i=1}^{k+1} \conv(P_i) \neq \emptyset$. Choose any point $p$ lying in this intersection, and observe that $p\in H$. 

Let $B$ be the set of well-supported points in $\partial U$, and choose supporting halfspaces $\{H_b^>\mid b\in B\}$ as per Lemma \ref{lem:goodhalfspaces}. \edit{Recall} by Lemma \ref{lem:goodhalfspaces}, \edit{that for any $b\in B$, the halfspace} $H_b^>$ contains all \edit{$U_j$ (and thus $p_j$)} except for at most $k$. In particular, there must be some $P_i$ such that $H_b^>$ contains all points in $P_i$, and hence also their convex hull. Thus $p\in H_b^>$ for all $b\in B$. But by \edit{Lemma \ref{lem:densesupport} and} Lemma \ref{lem:densecut}, this implies that $p\in \overline U$. Since $p\in H$ and $H$ was constructed not to contain $U$ or any of its boundary points, this is a contradiction. 

To prove the second part of the theorem, recall that Proposition \ref{prop:fails} shows that when $d\ge 2$ and $n = dk$, we can choose a\edit{n open} $k$-flexible sunflower $\U$ in $\R^d$ and points in each petal whose convex hull does not intersect the center of $\U$. This proves the result. 
\end{proof}

\begin{remark}
Note that when $k=1$, Theorem \ref{thm:flexible} is the same as Theorem \ref{thm:sunflower} (the usual Sunflower Theorem), and the application of Tverberg's theorem in the proof above reduces to an application of Radon's Theorem. Thus the fact that Theorem \ref{thm:flexible} generalizes Theorem \ref{thm:sunflower} is directly analogous to the fact that Tverberg's theorem generalizes Radon's. 
\end{remark}
\begin{remark}
In terms of neuroscientific motivation, flexible sunflowers are natural to investigate. Allowing some codewords beyond singletons, but of a fixed weight, accounts for some tolerance to error in data gathering and also captures a wider range of possibilities. We hope that flexible sunflowers may yield meaningful bounds on dimensions in experimental data. 
\end{remark}

Theorem \ref{thm:flexible} has implications regarding  the open embedding dimensions of intersection complete codes, which we will illustrate in Section \ref{subsec:SCoverD}, in particular by generalizing the families $\S_n$ and $\S_\Delta$ that were defined in Sections \ref{sec:sunflowercodeversion} and \ref{sec:SDelta} respectively. 

We conclude with a corollary which examines the extremal case in which we have a $k$-flexible sunflower $\U$ with $n=dk$ petals for which Theorem \ref{thm:flexible} fails. In this case Theorem \ref{thm:flexible} implies $\code(\U)$ must contain at least one codeword of weight $k$, but we can actually say something slightly stronger: 

\begin{corollary}\label{cor:manywords}
Let $\U = \{U_1,\ldots, U_n\}$ be an open $k$-flexible sunflower in $\R^d$. Suppose that $n = dk$, and there exist points $p_1,\ldots, p_n$ such that $p_i\in U_i$ and $\conv\{p_1,\ldots, p_n\}$ does not contain a point in the center of $\U$. Then $\code(\U)$ contains at least $d$ distinct codewords of weight $k$.
\end{corollary}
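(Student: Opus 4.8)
The plan is to assume the hypotheses of the corollary and rerun the opening of the proof of Theorem \ref{thm:flexible} up to the point where the sample points have been projected onto a separating hyperplane, but then replace the appeal to Tverberg's theorem by an appeal to Helly's theorem in one lower dimension. Write $U = U_{[n]}$ for the center. Since $\conv\{p_1,\ldots,p_n\}$ misses $U$, exactly as in the proof of Theorem \ref{thm:flexible} we may push each $p_i$ slightly within its open petal $U_i$ and then choose a hyperplane $\widehat H$ with $\widehat H\cap\overline U=\emptyset$ and all $p_i\in\widehat H$. The feature of that construction I want to retain is this: for every well supported $b\in\partial U$, if $H_b$ denotes the supporting halfspace furnished by Lemma \ref{lem:goodhalfspaces}, then $p_i\in H_b^{>}$ for every $i$ with $b\notin U_i$ (this is precisely what made the statement ``$H_b^{>}$ contains all but at most $k$ of the $p_j$'' true in that proof, after moving and projecting).

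Next I would push everything into $\widehat H\cong\R^{d-1}$. For each well supported $b$ put $R_b\od H_b^{>}\cap\widehat H$. Since $b$ lies outside the open set $U=\bigcap_i U_i$ but on its boundary, the set $\{i:b\notin U_i\}$ is nonempty, so $R_b\neq\emptyset$; consequently $R_b$ is either an open halfspace of $\widehat H$ (the generic case, when $H_b$ is not parallel to $\widehat H$) or all of $\widehat H$ (when $H_b\parallel\widehat H$), the latter contributing nothing below. Because well supported points are dense in $\partial U$ by Lemma \ref{lem:densesupport}, Lemma \ref{lem:densecut} gives $\bigcap_b H_b^{>}\subseteq\overline U$; intersecting with $\widehat H$ yields $\bigcap_b R_b\subseteq\overline U\cap\widehat H=\emptyset$. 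Now Helly's theorem in $\R^{d-1}$ produces well supported points $b_1,\ldots,b_s$ with $s\le d$ and $\bigcap_{j=1}^{s}R_{b_j}=\emptyset$, equivalently $\bigcup_{j=1}^{s}\bigl(\widehat H\setminus R_{b_j}\bigr)=\widehat H$.

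The argument then closes by a counting step. For each $b$ set $c_b\od\{i:b\in U_i\}$; since $b\in\A_\U^{c_b}$ this is a codeword of $\U$, and since $b\notin U=U_{[n]}$ we have $c_b\neq[n]$, so $k$-flexibility gives $|c_b|\le k$. From the retained feature, $p_i\notin R_{b_j}$ forces $b_j\in U_i$, i.e. $i\in c_{b_j}$; as each $p_i$ lies in $\widehat H=\bigcup_j(\widehat H\setminus R_{b_j})$, we get $[n]=\bigcup_{j=1}^{s}c_{b_j}$. Hence $dk=|[n]|\le\sum_{j=1}^{s}|c_{b_j}|\le sk\le dk$, so all inequalities are equalities: $s=d$, each $|c_{b_j}|=k$, and the $c_{b_j}$ are pairwise disjoint. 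In particular $c_{b_1},\ldots,c_{b_d}$ are $d$ distinct codewords of weight exactly $k$ in $\code(\U)$, as required.

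The crux I expect is simply recognizing that once the sample points are projected onto $\widehat H$ the effective ambient dimension drops to $d-1$, so that Helly yields exactly $d$ halfspaces --- which, against $n=dk$ points and petals overlapping at most $k$-deep, is precisely the numerology that forces $d$ pairwise disjoint weight-$k$ codewords. The remaining points (checking that $R_b$ is a halfspace, that $c_b$ is a codeword of weight at most $k$, and that Lemma \ref{lem:densecut} applies) are routine and already implicit in the proof of Theorem \ref{thm:flexible}.
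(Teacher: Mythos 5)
Your counting step at the end is sound (and would in fact give the stronger conclusion that $[n]$ is partitioned by $d$ disjoint weight-$k$ codewords), but the pivotal Helly step is not justified, and this is a genuine gap. Helly's theorem in $\R^{d-1}$ lets you pass from ``every $d$ members of a \emph{finite} family of convex sets intersect'' to a global intersection point; the infinite version requires compact (or at least closed, with a compactness hypothesis) sets. Your family $\{R_b\}$ is an \emph{infinite} family of \emph{open} halfspaces of $\widehat H$, indexed by the dense set of well supported boundary points, and for such families ``$\bigcap_b R_b=\emptyset$'' does not imply that some $d$ of them already have empty intersection: the intersection can empty out only in the limit, as with the rays $\{x>m\}$, $m\in\mathbb{N}$, in $\R^1$. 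This failure mode is not hypothetical in your setting, since $U$ and the petals may be unbounded: for instance, for $U=\{(x,y):y>e^x\}$ and $\widehat H=\{y=-1\}$, the supporting halfspaces at the boundary points cut $\widehat H$ in nested rays $\{x<t-1-e^{-t}\}$ whose total intersection is empty although every finite subfamily intersects. The one structural fact you do have --- that each $R_b$ contains all but at most $k$ of the $p_i$ --- does not by itself rule this out, and you give no compactness or finiteness argument that would let Helly (or Lemma \ref{lem:densecut}, which is inherently a statement about the full infinite family) produce the $s\le d$ points $b_1,\ldots,b_s$ you need. So as written the proof does not go through, and it is not clear the approach can be repaired without substantial extra work.

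For comparison, the paper proves Corollary \ref{cor:manywords} by a short induction on $k$ that sidesteps all of this: if $\code(\U)$ had fewer than $d$ codewords of weight $k$, delete one petal from each such codeword; what remains is a $(k-1)$-flexible sunflower with more than $d(k-1)$ petals, and the same points $p_i$ then contradict Theorem \ref{thm:flexible}. (The base case $k=1$ is immediate, since with fewer than $d$ weight-one codewords some petal equals the center.) That argument only invokes Theorem \ref{thm:flexible} as a black box, rather than re-entering its proof, which is why it avoids the infinite-family issue your route runs into.
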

\begin{proof}
We work by induction on $k$. When $k=1$ the result is clear since if there are fewer than $d$ codewords of weight $k$ in $\code(\U)$ then some $U_i$ is equal to the center of $\U$, and so some $p_i$ lies in the center of $\U$, a contradiction. For $k\ge 2$, suppose for contradiction that $\code(\U)$ contains fewer than $d$ codewords of weight $k$. For each of these codewords $c$, select some petal $U_i$ with $i\in c$. Deleting these $U_i$ yields a $(k-1)$-flexible sunflower, and since we have deleted fewer than $d$ petals our new $(k-1)$-flexible sunflower has more than $d(k-1)$ petals. But the same choice of $p_i$ yields a collection of points whose convex hull does not contain a point in the center of this $(k-1)$-flexible sunflower, contradicting Theorem \ref{thm:flexible}.
\end{proof}

\section{Tangled Sunflowers}\label{sec:Tn}

For $n\ge 1$ we construct a \edit{family of} intersection complete code\edit{s} $\T_n\subseteq 2^{[2n]}$, and investigate \edit{the open embedding dimensions of these codes. The code $\T_n$ is constructed so that its realizations consist of two sunflowers with $n$ petals that are ``tangled" in the sense that the $i$-th petal of the first sunflower meets the $i$-th petal of the second, and no other incidences occur.} We use \edit{the sunflower theorem (}Theorem \ref{thm:sunflower}\edit{)} to prove the following: for \edit{any} $d\ge 1$ there exists $n$ such that \edit{$\odim(\T_n) = d$.} Thus for every $d\ge 1$, one of the $\T_n$ codes describes an arrangement of convex open sets which can be achieved in $\R^d$ but not a smaller dimension. Beyond this statement and some basic bounds, however, determining the \edit{exact open embedding dimension of $\T_n$} remains an open problem, ripe for future investigation.

\begin{definition}\label{def:Tn}
Let $n\ge 1$. Define $\T_n\subseteq 2^{[2n]}$ to be the code consisting of the following codewords:\begin{itemize}
\item[(i)] $\{2k-1, 2k\}$ for $k=1,2,\ldots, n$,
\item[(ii)] $\{1, 3,5,\ldots, 2n-1\}$ and $\{2,4,6,\ldots, 2n\}$,
\item[(iii)] all singletons, and
\item[(iv)] the empty set. 
\end{itemize}
For each $n$ define $t_n \od \odim( \T_n)$.
\end{definition}

Observe that codewords of type (i) and (ii) are the maximal codewords in $\T_n$ for $n\ge 2$; in particular $\T_n$ has $n+2$ maximal codewords. Furthermore observe that $\T_n$ is intersection complete, and hence \edit{open} convex. Thus $t_n$ is finite for all $n$. 

Moreover, note that the odd-numbered sets in any realization of $\T_n$ form an $n$-petal sunflower, as do the even-numbered sets. These two sunflowers are ``tangled," in that their petals are matched and \edit{the matched petals} overlap \edit{each other.}

\begin{example}\label{ex:S1-S4}
The first four $\T_n$ are given below: \begin{align*}
\T_1 &= \{\mathbf{12},1, 2, \emptyset\},\\
\T_2 &= \{\mathbf{13}, \mathbf{24}, \mathbf{12}, \mathbf{34}, 1, 2, 3, 4, \emptyset\},\\
\T_3 &= \{\mathbf{135}, \mathbf{246}, \mathbf{12}, \mathbf{34}, \mathbf{56}, 1, 2, 3, 4, 5, 6, \emptyset\}. \\
\T_4 &= \{\mathbf{1357}, \mathbf{2468}, \mathbf{12}, \mathbf{34}, \mathbf{56}, \mathbf{78}, 1, 2, 3, 4, 5, 6,7, 8, \emptyset\}. 
\end{align*}
These have \edit{open} convex realizations in $\R^1, \R^2$, $\R^3$, and $\R^3$ respectively.\[
\includegraphics[width=32em]{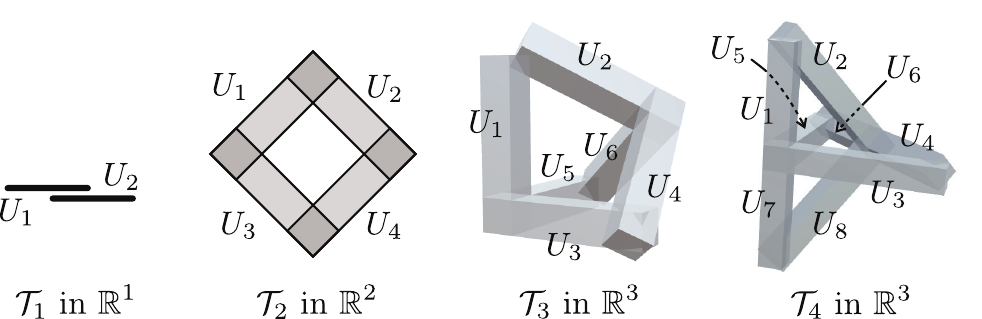}
\]
\end{example}

We will see that in fact each of the realizations in Example \ref{ex:S1-S4} is minimal with respect to dimension. That is, $t_1 = 1, t_2 =2, t_3 = t_4=3$. To build towards this result, we first prove some general results about the minimal embedding dimensions $\{t_n\mid n\ge 1\}$. 

\begin{proposition}\label{prop:sublinear}
For all $n\ge 1$, $t_n\le t_{n+1}\le t_n+1$. That is, the sequence $\{t_n\mid n\ge 1\}$ is weakly increasing and changes by at most 1 at each step. 
\end{proposition}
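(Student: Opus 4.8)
The plan is to prove the two inequalities separately, in both cases by manipulating realizations of $\T_n$ and $\T_{n+1}$.

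For the inequality $t_n \le t_{n+1}$, I would start with an optimal open realization $\U = \{U_1,\ldots,U_{2n+2}\}$ of $\T_{n+1}$ in $\R^{t_{n+1}}$ and show how to extract a realization of $\T_n$. The natural move is to delete the last matched pair of petals, i.e.\ forget $U_{2n+1}$ and $U_{2n+2}$, obtaining $\U' = \{U_1,\ldots,U_{2n}\}$. I need to check that $\code(\U') = \T_n$. Deleting sets from a realization simply deletes the index from every codeword and takes the resulting collection of sets; so $\code(\U')$ consists of all $\sigma \cap [2n]$ for $\sigma \in \T_{n+1}$. Running through the codeword types in Definition~\ref{def:Tn}: the pairs $\{2k-1,2k\}$ for $k \le n$ survive, the pair $\{2n+1,2n+2\}$ maps to $\emptyset$, the two long codewords $\{1,3,\ldots,2n+1\}$ and $\{2,4,\ldots,2n+2\}$ map to $\{1,3,\ldots,2n-1\}$ and $\{2,4,\ldots,2n\}$, the singletons $\{i\}$ with $i \le 2n$ survive while $\{2n+1\},\{2n+2\}$ map to $\emptyset$, and $\emptyset$ survives. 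So $\code(\U')$ is exactly $\T_n$, giving an open realization of $\T_n$ in $\R^{t_{n+1}}$, hence $t_n \le t_{n+1}$.

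For the inequality $t_{n+1} \le t_n + 1$, I would take an optimal open realization $\V = \{V_1,\ldots,V_{2n}\}$ of $\T_n$ in $\R^{t_n}$ and build a realization of $\T_{n+1}$ in $\R^{t_n+1}$ by adding one new matched pair of petals along a fresh coordinate direction. Embed $\R^{t_n}$ as $\R^{t_n} \times \{0\} \subseteq \R^{t_n+1}$, keep $V_1,\ldots,V_{2n}$ (thickened slightly in the new direction so they remain open, e.g.\ replace $V_i$ by $V_i \times (-1,1)$), and pick a point $q$ in the center atom $\A_\V^{\{1,3,\ldots,2n-1\}}$ for the odd sunflower and a point $q'$ in $\A_\V^{\{2,4,\ldots,2n\}}$; one must arrange (after translating in the new coordinate) that a small ball $B$ around $q$ in $\R^{t_n+1}$ meets exactly the odd $V_i$, extended, and $B'$ around $q'$ meets exactly the even ones. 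Then define $V_{2n+1}$ to be the open convex hull of $B$ with a point far out along $+e_{t_n+1}$, and $V_{2n+2}$ the open convex hull of $B'$ with a point far out along $+e_{t_n+1}$, arranged so that $V_{2n+1} \cap V_{2n+2} \neq \emptyset$ high up the new axis (making $\{2n+1,2n+2\}$ a codeword) but $V_{2n+1},V_{2n+2}$ are disjoint from all the $V_i \times(-1,1)$ except where they pass through $B, B'$ respectively. Checking that the resulting code is exactly $\T_{n+1}$ — the new long codewords $\{1,3,\ldots,2n+1\}$ and $\{2,4,\ldots,2n+2\}$ appear because $V_{2n+1}$ passes through the odd center, no spurious codewords are created, and the maximal codewords are as claimed — is the bulk of the verification.

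The main obstacle is the second inequality: one must add the new pair of petals without accidentally creating extra intersection patterns — in particular $V_{2n+1}$ must avoid all even-indexed sets and vice versa, and neither new petal may intersect $V_{[2n]}$-type atoms it shouldn't. This is geometrically plausible (the new coordinate direction gives the room to keep the new petals away from everything except the designated small balls), but it requires care, and I would likely isolate it as the content of the proof while treating the first inequality as the quick deletion argument above. An alternative, cleaner route for the upper inequality would be to invoke a general ``one can realize a code obtained by adding one neuron in one more dimension'' principle if such a lemma were available; absent that in the excerpt, the explicit construction is the way to go.
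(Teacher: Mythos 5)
Your deletion argument for $t_n \le t_{n+1}$ is exactly the paper's, and it is fine. The gap is in your construction for $t_{n+1}\le t_n+1$, and it is a real one rather than a matter of bookkeeping. In $\T_{n+1}$ the only codeword containing two indices from $\{1,3,\ldots,2n-1\}$ is the full odd codeword $\{1,3,\ldots,2n+1\}$; hence in any realization of $\T_{n+1}$ every point of, say, $V_1\cap V_3$ must lie in $V_{2n+1}$ as well. But you take $V_{2n+1}$ to be a narrow cone over a small ball $B$ around a single point $q$ of the odd center. Points of the thickened $V_1\cap V_3$ away from $q$ then lie in all the old odd sets but not in $V_{2n+1}$, producing the spurious codeword $\{1,3,\ldots,2n-1\}$ (and symmetrically $\{2,4,\ldots,2n\}$ on the even side), neither of which belongs to $\T_{n+1}$. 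The new petals must contain the entire (thickened) centers, not just a neighborhood of one point. This is precisely how the paper proceeds: the old sets become thin cylinders of height $(0,\varepsilon)$ over the $U_i$, the new odd petal is the \emph{full} cylinder $\{v+\gamma e_{d+1}\mid v\in W_1,\ \gamma>0\}$ over the odd center $W_1=U_1\cap U_3\cap\cdots\cap U_{2n-1}$, and the new even petal is the analogous cylinder over $W_2$ skewed in the direction $e_{d+1}-w$ for a fixed $w\in W_2$ (with the even old sets skewed the same way), so that the two new petals meet at $e_{d+1}$ without either one drifting into the other family at low heights.

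There is a second, smaller omission: for your ball $B$ around $(q,0)$ to avoid every even set you need $q$ to have \emph{positive distance} from the even sets, and from an arbitrary open realization this can fail, since the center atom may lie in the closure of an even set (this is exactly the degeneracy issue of Example \ref{ex:trimfails}). The paper handles it by first applying Lemma \ref{lem:trimrealization} — which uses that $\T_n$ is intersection complete — to obtain a non-degenerate realization in which disjoint intersections $U_\sigma$, $U_\tau$ have positive distance, and then choosing the thickening/skew parameter $\varepsilon$ small relative to those distances. Your phrase ``one must arrange (after translating in the new coordinate)'' does not supply a mechanism: translating along the new axis cannot help, because at the heights where the new petal must meet the old odd sets it is the horizontal distance to the even sets that matters.
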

\begin{proof}
The inequality $t_n\le t_{n+1}$ follows from the fact that a realization of $\T_n$ can be obtained from a realization of $\T_{n+1}$ by simply deleting $U_{2n+1}$ and $U_{2n+2}$. To prove the inequality $t_{n+1}\le t_n + 1$ we argue that if $\T_n$ is \edit{open} convex in $\R^d$, then $\T_{n+1}$ is \edit{open} convex in $\R^{d+1}$.

Since $\T_n$ is intersection complete, we may apply Lemma \ref{lem:trimrealization} to obtain a\edit{n open convex} realization $\U = \{U_1,\ldots, U_{2n}\}$ of $\T_n$ in $\R^d$ in which disjoint $U_\sigma$ have positive distance between them. We will use this to create a\edit{n open convex} realization of $\T_{n+1}$ in $\R^{d+1}$. To start, identify $\R^d$ with the subspace of $\R^{d+1}$ in which $x_{d+1} = 0$, and define $W_1 = U_1\cap U_3\cap\cdots\cap U_{2n-1}$ and $W_2 = U_2\cap U_4\cap\cdots \cap U_{2n}$. We may assume that the origin lies in $W_1$. Now choose a vector $w\in W_2$ and a small positive $\varepsilon$, and define a collection $\V = \{V_1, V_2,\ldots, V_{2n+2}\}$ as follows:\[
V_i = \begin{cases}
\{v + \gamma e_{d+1} \mid v\in U_i \text{ and } 0 < \gamma < \varepsilon\} & \text{for } i = 1, 3, \ldots, 2n-1,\\
\{v + \gamma (e_{d+1}-w) \mid v\in U_i \text{ and } 0 < \gamma < \varepsilon\} & \text{for } i = 2, 4, \ldots, 2n,\\
\{v + \gamma e_{d+1} \mid v\in W_1\text{ and } \gamma >0\}& \text{for } i=2n+1,\\
\{v + \gamma (e_{d+1}-w) \mid v\in W_2\text{ and } \gamma >0\}& \text{for } i=2n+2.
\end{cases}
\]

This construction is shown below when $d=2$. The set $V_{2n+1}$ is a vertical \edit{prism} over $W_1$, and the set $V_{2n+2}$ is the skewed \edit{prism} over $W_2$. The remaining $V_i$ are $\varepsilon$-thick \edit{prisms} over the corresponding $U_i$, with even $V_i$ skewed at the same angle as $V_{2n+2}$. The origin is represented by the black dot. \[
\includegraphics[width=24em]{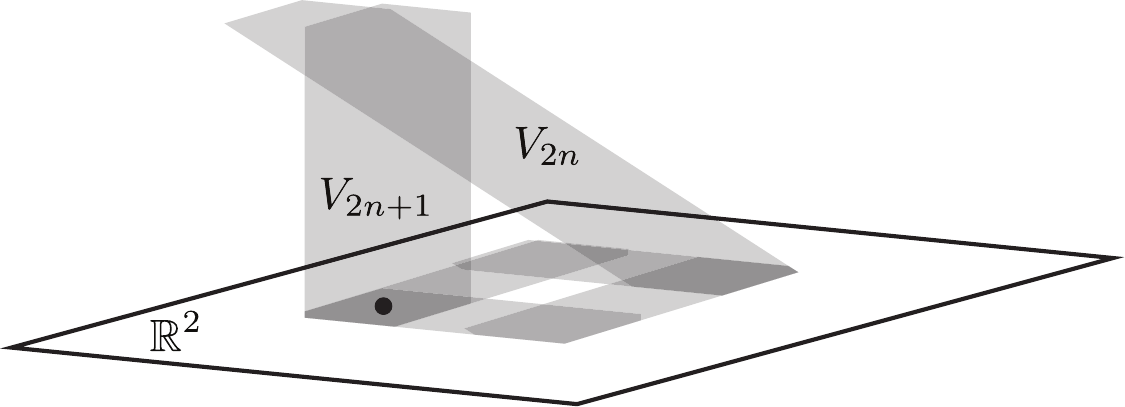}
\]

We claim that the collection $\V$ is a\edit{n open convex} realization of $\T_{n+1}$. First, observe that all $V_i$ are open and convex in $\R^{d+1}$. To see that they form a realization of $\T_{n+1}$, we must check that the odd $V_i$ and even $V_i$ both form sunflowers, and that only the appropriate petals intersect one another.

For the odd $V_i$, note that $\{V_1,V_3,\ldots, V_{2n-1}\}$ is a sunflower since the odd $U_i$ form a sunflower. Adding $V_{2n+1}$ to this collection preserves the sunflower property since $V_{2n+1}$ is simply the product of $W_1$ with an open ray. Similar logic holds for the even $V_i$: we see that $\{V_2,V_4,\ldots, V_{2n}\}$ forms a sunflower, and the additional petal $V_{2n+2}$ only overlaps any other petal in the region $\{v + \gamma (e_{d+1}-w) \mid v\in W_2\text{ and } 0<\gamma< \varepsilon\}$, which is the intersection of all the petals.

To see that the petals overlap in the correct manner, first note that $V_{2i-1}\cap V_{2i}$ is nonempty for $i=1,\ldots, n$ since the same holds for $U_{2i-1}\cap U_{2i}$. For $V_{2n+1}\cap V_{2n+2}$, simply note that $e_{d+1}\in V_{2n+1}\cap V_{2n+2}$ so the intersection is nonempty. Thus we have at least the appropriate overlapping between the petals of our two sunflowers, and it remains to show that no additional overlap has been introduced.

For this it suffices to argue that for all $j< k$ with different parity, the sets $V_j$ and $V_k$ are disjoint unless $j=2i-1$ and $k=2i$. We know that this property holds for the $U_i$, and since we chose a nondegenerate realization we know that disjoint $U_i$ have positive distance between them. Except for $V_{2n+1}$ and $V_{2n+2}$, all the $V_i$ are simply a slightly thickened $U_i$, possibly with a small skew by the vector $w$. By choosing $\varepsilon$ small enough, we can assume that the skew does not overcome the distance between disjoint $U_i$, so the $V_i$ satisfy the same disjointness for $i=1,2,\ldots, 2n$. This leaves the case of $V_{2n+1}$ and $V_{2n+2}$. For these, observe that all $V_i$ with $i\le 2n$ contain only points whose $(d+1)$-st coordinate is between 0 and $\varepsilon$. As discussed previously, the only points in $V_{2n+1}$ and $V_{2n+2}$ whose $(d+1)$-st coordinate satisfies these constraints are those in the center of the respective sunflowers. Thus neither of these sets overlap any petals they should not, and we have indeed formed a\edit{n open convex} realization of $\T_{n+1}$ in $\R^{d+1}$. This proves the result. 
\end{proof}

\begin{theorem}\label{thm:unbounded}
For all $n$, $t_n \ge \lceil n/2 \rceil$. In particular, the sequence $\{t_n\mid n\ge 1\}$ is unbounded.
\end{theorem}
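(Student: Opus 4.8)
The plan is to show that any convex open realization of $\T_n$ in $\R^d$ forces $d \ge \lceil n/2 \rceil$, by extracting two sunflowers from such a realization and applying the Sunflower Theorem (Theorem~\ref{thm:sunflower}) to a well-chosen sub-collection. Suppose $\U = \{U_1,\ldots,U_{2n}\}$ realizes $\T_n$ in $\R^d$. As noted after Definition~\ref{def:Tn}, the odd-indexed sets $\{U_1,U_3,\ldots,U_{2n-1}\}$ form an $n$-petal sunflower with center $W_1 \od U_1\cap U_3\cap\cdots\cap U_{2n-1} = U_{\{1,3,\ldots,2n-1\}}$, and similarly the even-indexed sets form an $n$-petal sunflower with center $W_2$. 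The key structural fact is that $W_1$ and $W_2$ are disjoint: the codeword $[2n]$ (equivalently $\{1,3,\ldots,2n-1\}\cup\{2,4,\ldots,2n\}$) does not lie in $\T_n$, since the only maximal codewords of type (ii) are the all-odd and all-even sets separately. Meanwhile, each matched pair $U_{2i-1}\cap U_{2i}$ is nonempty because $\{2i-1,2i\}\in\T_n$.

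First I would fix an index $i$ and consider the pair $U_{2i-1}, U_{2i}$ together with the "complementary" centers of the two sunflowers that exclude that pair. More precisely, here is the cleaner route: for each $i\in[n]$ pick a point $q_i\in U_{2i-1}\cap U_{2i}$. Consider the odd sunflower $\{U_1,U_3,\ldots,U_{2n-1}\}$ with center $W_1$; note that $q_i\in U_{2i-1}$ for each $i$, so $q_i$ lies in the $i$-th petal of the odd sunflower. If $n\ge d+1$, the Sunflower Theorem applied to any $d+1$ of the petals (say petals $1,\ldots,d+1$, sampling $q_1,\ldots,q_{d+1}$) would give $\conv\{q_1,\ldots,q_{d+1}\}\cap W_1\neq\emptyset$. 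By the identical argument with the even sunflower (again $q_i\in U_{2i}$, the $i$-th even petal), $\conv\{q_1,\ldots,q_{d+1}\}\cap W_2\neq\emptyset$ — but this alone does not yet produce a contradiction, since the two convex hulls meet $W_1$ and $W_2$ at possibly different points. The fix is to sample a single point from each petal simultaneously for both sunflowers, which is exactly what the $q_i$ do, and then to apply the Sunflower Theorem twice to the \emph{same} $d+1$ points but observe we need the hull to land in $W_1\cap W_2$; since that intersection is empty, we must instead argue that if $n$ is large enough relative to $d$, the combined constraints are unsatisfiable.

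So the actual argument I would run is a counting one: partition $[n]$ into the points used for the odd sunflower and those used for the even one. Choose disjoint index sets $A, B\subseteq[n]$ with $|A|=|B|=d+1$; this is possible precisely when $n\ge 2(d+1)$, i.e. $2d+2\le n$. Sample $q_a\in U_{2a-1}$ for $a\in A$ (petals of the odd sunflower) and $q_b\in U_{2b}$ for $b\in B$ (petals of the even sunflower). The Sunflower Theorem forces $\conv\{q_a : a\in A\}$ to meet $W_1$, and $\conv\{q_b : b\in B\}$ to meet $W_2$ — these are genuinely different point sets, so no contradiction. This suggests the bound $\lceil n/2\rceil$ comes from a different pairing: for each matched pair, the point $q_i\in U_{2i-1}\cap U_{2i}$ simultaneously serves as a petal-sample for petal $i$ of \emph{both} sunflowers. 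If $n\ge d+1$, applying the Sunflower Theorem to the odd sunflower with samples $q_1,\ldots,q_{d+1}$ forces a point $x\in\conv\{q_1,\ldots,q_{d+1}\}\cap W_1$; since each $q_j\in U_{2j}$ as well, the \emph{same} hull meets $W_2$. The remaining obstacle — and I expect this to be the crux — is to upgrade "the hull meets $W_1$" and "the hull meets $W_2$" into "$W_1\cap W_2\neq\emptyset$", which is false. The resolution is that we do not need them to coincide; instead, observe $\conv\{q_1,\ldots,q_{d+1}\}\subseteq U_{2j}$ for \emph{no} single $j$, but the hull \emph{is} contained in the union, and more carefully: since every $q_j$ lies in every even petal $U_{2j}$ only when... this is where I would need to track which petals each $q_i$ lies in. I would reconcile this by choosing the $d+1$ samples to be $q_{i}$ for $i$ ranging over the first $\lceil n/2\rceil$ matched pairs and using that $\conv$ of these meets $W_1$, then noting the codeword structure of $\T_n$ forbids the resulting incidences unless $d\ge\lceil n/2\rceil$; the main work is bookkeeping the atoms, and I expect the honest proof to apply the Sunflower Theorem to a sunflower built from $\lceil n/2\rceil$ of the $U_{2i-1}\cap U_{2i}$ regions whose pairwise intersections are forced into $W_1\cap W_2=\emptyset$. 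The unboundedness conclusion is then immediate since $\lceil n/2\rceil\to\infty$.
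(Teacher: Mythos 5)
Your setup is the right one (the two tangled sunflowers with centers $W_1$, $W_2$, disjointness of the centers, and the points $q_i\in U_{2i-1}\cap U_{2i}$), but the proposal never closes the argument, and you say so yourself: applying Theorem~\ref{thm:sunflower} directly to the $q_i$ only tells you that $\conv\{q_i\}$ meets $W_1$ and meets $W_2$, which is perfectly consistent and yields no contradiction (indeed it must happen in the realization of $\T_4$ in $\R^3$). Your fallback sketch is also unsound: the regions $U_{2i-1}\cap U_{2i}$ are \emph{pairwise disjoint} in any realization of $\T_n$ (no codeword of $\T_n$ contains indices from two different matched pairs together with both members of a pair --- in fact no codeword contains $\{2i-1,2i,2j-1\}$ for $j\neq i$), so they have no common center and cannot be assembled into a sunflower to which Theorem~\ref{thm:sunflower} applies. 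Thus the crux is genuinely missing.

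The missing idea in the paper's proof is a separating hyperplane combined with a pigeonhole count. Take $d=\lceil n/2\rceil-1$, suppose a realization in $\R^d$ exists, and let $H$ separate the disjoint open convex sets $W_1$ and $W_2$ (so $H$ meets neither). Fix $p_1\in W_1$, $p_2\in W_2$, and for each $k$ consider the segments $L_k=\overline{p_1q_k}$ and $M_k=\overline{q_kp_2}$; by convexity $L_k\subseteq U_{2k-1}$ and $M_k\subseteq U_{2k}$, and the concatenated path crosses $H$, so for each $k$ at least one of $L_k,M_k$ meets $H$. By pigeonhole, at least $\lceil n/2\rceil=d+1$ of, say, the $L_k$ meet $H$, giving $d+1$ points of $H$ lying in $d+1$ distinct odd petals. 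Those petals form a $(d+1)$-petal sunflower in $\R^d$ with center $W_1$, so Theorem~\ref{thm:sunflower} forces their convex hull to meet $W_1$; but that hull lies in $H$, which is disjoint from $W_1$ --- contradiction. This is exactly the step your ``bookkeeping of atoms'' was groping for: the hyperplane is what converts ``the hull meets $W_1$'' into an impossibility, rather than any attempt to force $W_1\cap W_2\neq\emptyset$.
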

\begin{proof}
Let $d = \lceil n/2 \rceil-1$. We must show that $\T_n$ does not have a\edit{n open convex} realization in $\R^d$. Suppose for contradiction that such a realization existed, consisting of sets $\{U_1,\ldots, U_{2n}\}$. Define $V_1 = U_1\cap U_3\cap \cdots\cap U_{2n-1}$ and $V_2 = U_2\cap U_4\cap\cdots\cap U_{2n}$. Observe that $V_1$ and $V_2$ are disjoint, \edit{nonempty, convex, and open. Thus we can choose a hyperplane $H$} separating $V_1$ and $V_2$.

Choose $p_1\in V_1$ and $p_2\in V_2$, and for $k\in[n]$ choose a point $q_k\in U_{2k-1}\cap U_{2k}$ (this intersection is nonempty since $\{2k-1, 2k\}$ is a codeword in $\T_n$). Now, for $k\in[n]$ consider the line segments $L_k = \overline{p_1q_k}$ and $M_k = \overline{q_k p_2}$. The union $L_k\cup M_k$ forms a path that begins on one side of $H$ and ends on the other, so for all $k$ either $L_k$ or $M_k$ contains a point in $H$ (and possibly both do). By choice of $d$ and pigeonhole principle, either at least $d+1$ of the line segments $\{L_k\}$ contain a point in $H$, or at least $d+1$ of the line segments $\{M_k\}$ contain a point in $H$. 

Without loss of generality, we may assume that at least $d+1$ of the $\{L_k\}$ contain a point $p_k$ in $H$. The convex hull of these $p_k$ lies in $H$, and therefore does not intersect the center $V_1$ of the sunflower $\{U_1,U_3,\ldots, U_{2n-1}\}$. But $L_k\subseteq U_{2k-1}$, so each $p_k$ lies in the petal $U_{2k-1}$. Since there are at least $d+1$ points $p_k$, Theorem \ref{thm:sunflower} implies that their convex hull \edit{(and thus $H$)} must intersect $V_1$, a contradiction.\end{proof}

\begin{corollary}
The sequence $\{t_n\mid n\ge 1\}$ takes on all positive integer values.
\end{corollary}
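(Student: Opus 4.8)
The plan is to deduce this corollary combinatorially from the two structural results already established about the sequence $(t_n)_{n\ge 1}$, with no further geometry required. The ingredients are: (i) the base value $t_1 = 1$, recorded in Example \ref{ex:S1-S4} (one can also note that $\T_1 = \S_1$, so $t_1 = \odim(\S_1) = 1$ by Theorem \ref{thm:sunflowercodeversion}); (ii) Proposition \ref{prop:sublinear}, which says the sequence is weakly increasing and satisfies $t_{n+1}\le t_n+1$; and (iii) Theorem \ref{thm:unbounded}, which says the sequence is unbounded (indeed $t_n\ge \lceil n/2\rceil$).

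First I would fix an arbitrary positive integer $m$ and reduce the claim to producing some index $n$ with $t_n = m$. The case $m=1$ is immediate from (i). For $m\ge 2$, I would use (iii) to choose an index $N$ with $t_N\ge m$ --- concretely $N = 2m$ works, since $t_{2m}\ge \lceil 2m/2\rceil = m$ --- and then let $n^\ast$ be the largest index $n\le N$ with $t_n\le m-1$. This index set is nonempty because $t_1 = 1\le m-1$, and it does not contain $N$ because $t_N\ge m$, so $1\le n^\ast < N$ and $n^\ast+1$ is a legitimate index. By maximality of $n^\ast$ we have $t_{n^\ast+1}\ge m$, while Proposition \ref{prop:sublinear} gives $t_{n^\ast+1}\le t_{n^\ast}+1\le (m-1)+1 = m$; hence $t_{n^\ast+1} = m$. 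Since $m$ was arbitrary, the range of $(t_n)$ is exactly $\{1,2,3,\dots\}$.

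I do not expect any real obstacle here: this is a discrete intermediate value argument once Proposition \ref{prop:sublinear} and Theorem \ref{thm:unbounded} are in hand. The only point that requires a moment's care is making sure the base value $t_1 = 1$ is available (equivalently, treating $m=1$ separately), so that the set of indices with $t_n\le m-1$ is nonempty; everything else is bookkeeping.
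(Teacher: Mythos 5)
Your argument is correct and uses exactly the same ingredients as the paper's proof ($t_1=1$, Proposition \ref{prop:sublinear}, and Theorem \ref{thm:unbounded}); the paper simply states the discrete intermediate value conclusion without writing out the maximal-index bookkeeping you supply. No issues.
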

\begin{proof}
We know that $t_1 = 1$. Theorem \ref{thm:unbounded} implies that the sequence is unbounded, and Proposition \ref{prop:sublinear} tells us that it increases by at most 1 at each step. Thus it must achieve every positive integer value. 
\end{proof}

In the remainder of this section, we determine $t_n$ for all $n\le 5$. The arguments used below are concrete, but seem difficult to generalize.
\begin{proposition}\label{prop:T3}
The code $\T_3$ does not have a\edit{n open convex} realization in $\R^2$, but does have a\edit{n open convex} realization in $\R^3$. 
\end{proposition}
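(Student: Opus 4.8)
The plan is to handle the two claims separately. For the realizability in $\R^3$, the simplest route is to exhibit an explicit arrangement of five convex open sets in $\R^3$ whose code is $\T_3 = \{\mathbf{135},\mathbf{246},\mathbf{12},\mathbf{34},\mathbf{56},1,2,3,4,5,6,\emptyset\}$; one can either draw such a picture directly (as the figure accompanying Example~\ref{ex:S1-S4} suggests) or appeal to Proposition~\ref{prop:sublinear}: we know $t_2=2$ (this follows from Theorem~\ref{thm:unbounded}, which gives $t_2\ge\lceil 2/2\rceil = 1$—wait, that only gives $\ge 1$; instead realize $\T_2$ directly in $\R^2$ as two pairs of crossing strips), hence $t_3 \le t_2 + 1 \le 3$. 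Either way this direction is routine and I would dispatch it with a concrete construction of thickened segments/slabs: let $U_1,U_3,U_5$ be three thin convex open sets overlapping in a common central cell and pairwise nowhere else (a sunflower with three petals, realizable in $\R^2$), and simultaneously arrange $U_2,U_4,U_6$ as another such sunflower, with $U_{2k-1}$ meeting $U_{2k}$ in a small region but no other mixed-parity pair intersecting. Three dimensions suffice to disentangle the two tangled sunflowers—indeed this is exactly the content of the general construction in the proof of Proposition~\ref{prop:sublinear}.

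For the nonexistence of a realization in $\R^2$, I would argue by contradiction along the lines of the proof of Theorem~\ref{thm:unbounded}, but pushing harder than the crude pigeonhole bound $t_3 \ge \lceil 3/2\rceil = 2$ gives. Suppose $\U = \{U_1,\ldots,U_6\}$ realizes $\T_3$ in $\R^2$. Set $V_{\text{odd}} = U_1\cap U_3\cap U_5$ and $V_{\text{even}} = U_2\cap U_4\cap U_6$; both are nonempty (codewords $135$ and $246$) and disjoint (no codeword meets both an odd and an even index beyond the pairs $\{2k-1,2k\}$, and in particular $135\cap 246 = \emptyset$ forces $V_{\text{odd}}\cap V_{\text{even}}=\emptyset$). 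Let $H$ be a line separating them. For $k=1,2,3$ pick $q_k\in U_{2k-1}\cap U_{2k}$, and pick $p_{\text{odd}}\in V_{\text{odd}}$, $p_{\text{even}}\in V_{\text{even}}$. Each path $\overline{p_{\text{odd}}q_k}\cup\overline{q_k p_{\text{even}}}$ crosses $H$, so for each $k$ the segment $\overline{p_{\text{odd}}q_k}$ or the segment $\overline{q_k p_{\text{even}}}$ meets $H$; by pigeonhole at least two of the three paths cross $H$ on the same side, giving (WLOG) two points $p_{k}, p_{k'}\in H$ with $p_k\in U_{2k-1}$, $p_{k'}\in U_{2k'-1}$. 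This only gives two petals of the odd sunflower hitting $H$, which is not enough to invoke Theorem~\ref{thm:sunflower} (which needs $d+1 = 3$ petals in $\R^2$). So the main obstacle is genuinely this: the naive argument is off by one, and one must extract a \emph{third} point on $H$ lying in an odd petal (or symmetrically three on the even side).

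To close that gap I would exploit the extra codeword structure of $\T_3$ more carefully. One natural idea: if only two of the $\overline{p_{\text{odd}}q_k}$ cross $H$, say $k=1,2$, then $\overline{q_3 p_{\text{even}}}$ crosses $H$, i.e.\ we get a point on $H$ in $U_6$ (the even petal matched to $U_5$); combined with the symmetric analysis for the even sunflower one hopes to force three even petals to touch $H$. More robustly, I expect the cleanest approach is to invoke Corollary~\ref{cor:manywords} or Theorem~\ref{thm:flexible} directly: project (as in the proof of Theorem~\ref{thm:flexible}) to reduce to points on a line, and observe that the combinatorics of $\T_3$ forbid the weight-$2$ codewords that Theorem~\ref{thm:flexible}/Corollary~\ref{cor:manywords} would require for failure with $n = dk$. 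Concretely, the odd sets $\{U_1,U_3,U_5\}$ together with $U_6$ (or the even sets plus $U_1$) form a configuration to which one can try to apply the flexible-sunflower machinery with $d=2$. Working out precisely which four or five of the $U_i$ to feed into Theorem~\ref{thm:flexible}, and checking that the resulting code has no forbidden weight-$k$ codeword, is the technical heart of the argument; I would organize the write-up around that case analysis, using the separating line $H$ and the points $q_k$ as above, and deriving the contradiction from the fact that the center $V_{\text{odd}}$ (or $V_{\text{even}}$) would then have to meet $H$.
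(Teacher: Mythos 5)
The $\R^3$ half of your proposal is fine: the explicit realization in Example \ref{ex:S1-S4} (or $t_3\le t_2+1$ via Proposition \ref{prop:sublinear}) settles it, and this matches the paper. The problem is the lower bound: you never actually prove that $\T_3$ has no realization in $\R^2$. You correctly diagnose that the separating-line-plus-pigeonhole argument from Theorem \ref{thm:unbounded} is ``off by one'' (it only yields two points of $H$ in odd petals, while the Sunflower Theorem in $\R^2$ needs three), but your route to closing that gap is left as a hope. Worse, the concrete fix you suggest does not work as stated: the family $\{U_1,U_3,U_5,U_6\}$ (or the even sets plus $U_1$) is \emph{not} a $k$-flexible sunflower in the sense of Definition \ref{def:flexiblesunflower}, because a flexible sunflower requires the full codeword $[n]$, i.e.\ a common center, and $U_6$ is disjoint from $U_1$ (no codeword of $\T_3$ contains $\{1,6\}$), so $U_1\cap U_3\cap U_5\cap U_6=\emptyset$. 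Hence Theorem \ref{thm:flexible} and Corollary \ref{cor:manywords} simply do not apply to that configuration, and the ``technical heart'' you defer to a case analysis is exactly the missing idea.

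The paper's argument avoids the separating line entirely. Pick $q_k\in U_{2k-1}\cap U_{2k}$ for $k=1,2,3$; these three points hit one petal of each of the two sunflowers $\{U_1,U_3,U_5\}$ and $\{U_2,U_4,U_6\}$, so by Theorem \ref{thm:sunflower} (applied twice, $d=2$, three petals) the triangle $\conv\{q_1,q_2,q_3\}$ contains a point $p_1$ of the odd center and a point $p_2$ of the even center. After a general-position perturbation, $p_2$ lies in one of the three subtriangles $\conv\{p_1,q_i,q_j\}$; then the segment from $p_2$ to the remaining $q_k$, which lies in the even set $U_{2k}$, must cross $\overline{p_1q_i}$ or $\overline{p_1q_j}$, each of which lies in an odd set not matched to $U_{2k}$. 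This forces a mixed-parity intersection such as $U_6\cap U_1$ or $U_6\cap U_3$, contradicting the codeword structure of $\T_3$. That planar crossing step is the ingredient your write-up lacks, so as it stands the proposal has a genuine gap.
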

\begin{proof}
A\edit{n open convex} realization of $\T_3$ in $\R^3$ is given in Example \ref{ex:S1-S4}. Thus we just have to argue that $\T_3$ does not have a\edit{n open} convex realization in $\R^2$. Suppose for contradiction that $\{U_1, U_2, U_3, U_4, U_5, U_6\}$ is \edit{such} a realization of $\T_3$ in $\R^2$. Choose points $q_1\in U_1\cap U_2, q_2\in U_3\cap U_4$, and $q_3\in U_5\cap U_6$. Note that $\{U_1,U_3,U_5\}$ and $\{U_2, U_4,U_6\}$ are both sunflowers and that $\{q_1,q_2,q_3\}$ is a set containing one point from each petal for both of these sunflowers. By Theorem \ref{thm:sunflower} the triangle $\conv\{q_1,q_2,q_3\}$ contains a point $p_1\in U_1\cap U_3\cap U_5$ and $p_2\in U_2\cap U_4\cap U_6$. Since all the $U_i$ are open sets, we may assume that $\{p_1,p_2, q_1,q_2,q_3\}$ is in general position. The set of points $\{p_1, q_1,q_2,q_3\}$ can be visualized as follows:\[
\includegraphics[width=8em]{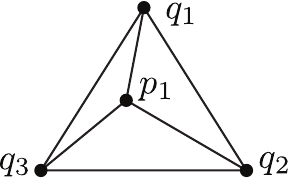}
\]
Now, $p_2$ falls in the interior of one of the three triangular regions surrounding $p_1$. Suppose that $p_2$ lies in the interior of $\conv\{p_1, q_1, q_2\}$ (i.e. the top right triangle above). Then consider the line segment $L = \overline{p_2 q_3}$, observing that $L$ is contained in $U_6$. The line segment $L$ must cross either the line segment $\overline{p_1q_1}\subseteq U_1$ or $\overline{p_1q_2}\subseteq U_3$. In the former case we see  that $U_6\cap U_1 \neq \emptyset$, and in the latter $U_6\cap U_3 \neq\emptyset$. But there is no codeword in $\T_3$ containing $\{1,6\}$ or $\{3,6\}$, so both of these situations lead to a contradiction. Thus $\T_3$ is not convex in $\R^2$. 
\end{proof}

The lemma below will allow us to prove that $t_5 \ge 4$ by showing that if $\T_5$ has a\edit{n open convex} realization in $\R^3$, then $\T_3$ has a\edit{n open convex} realization in $\R^2$, contradicting Proposition \ref{prop:T3}.

\begin{lemma}\label{lem:5points}
Given five points in $\R^3$ in general position, there exists a plane $H$ containing three of the points and with the remaining two points on opposite sides of $H$. 
\end{lemma}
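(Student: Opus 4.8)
The plan is to exploit the combinatorics of point configurations in $\R^3$ via a counting/parity argument on the $\binom{5}{3} = 10$ planes spanned by triples of the five points. Fix five points $p_1,\ldots,p_5$ in general position (no four coplanar, no three collinear), and for each $3$-element subset $T\subseteq [5]$ let $H_T$ be the unique plane through $\{p_i : i\in T\}$. The two remaining points lie strictly off $H_T$ (by general position), so each $H_T$ either \emph{separates} the remaining pair (one on each side) or \emph{fails to separate} it (both on the same side). I want to show at least one $T$ is of the separating type.

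The main step is to argue that an odd number of the ten planes $H_T$ are separating; since $10$ is even this is not literally what I want, so instead I will argue more carefully that the number of separating planes is always positive by relating it to the combinatorial type of the configuration. One clean route: project generically to $\R^2$ is not faithful, so instead consider the affine Gale/oriented-matroid picture, or argue directly. The direct argument I would carry out: there are only two combinatorial types of five points in general position in $\R^3$ — either the five points are the vertices of a simplicial polytope (a bipyramid-like or ``$4$-simplex with one point outside a facet'' configuration), or one point lies inside the tetrahedron spanned by the other four. In the second case, if $p_5 \in \interior\conv\{p_1,p_2,p_3,p_4\}$, then for each facet triple $T = [4]\setminus\{j\}$ of the tetrahedron, the plane $H_{T\cup\{?\}}$... rather, consider the three planes $H_{\{i,j,5\}}$ through $p_5$ and an edge of the tetrahedron: one checks that the plane through $p_5$ and the edge $p_1p_2$ separates $p_3$ from $p_4$ (since $p_5$ is interior, the tetrahedron is cut by this plane into two pieces containing $p_3$ and $p_4$ respectively). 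In the first case, where the five points are in convex position forming a simplicial $3$-polytope $P$, use the fact that $P$ has a Radon partition: by Radon's theorem the five points split as a union $A \sqcup B$ with $\conv A \cap \conv B \neq \emptyset$, and in $\R^3$ with five points in convex position the only possible partition type is $|A| = 3$, $|B| = 2$ (a triangle meeting a segment); then the segment's endpoints lie on opposite sides of the plane through the triangle — that plane is the desired $H_T$.

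The cleanest unified version of the above, which is what I would actually write: apply \textbf{Radon's theorem} directly to the five points $p_1,\ldots,p_5$ in $\R^3$. We get a partition into nonempty sets $A,B$ with $\conv A\cap\conv B\neq\emptyset$. Since the points are in general position, no three are collinear and no four coplanar, which forces $\{|A|,|B|\} = \{2,3\}$ (a partition of type $\{1,4\}$ would put a point inside a triangle or on a segment, impossible in general position; type $\{2,3\}$ is the only one left as $\{1,4\}$ is excluded and there is no $\{0,5\}$). Say $A = \{p_a, p_b, p_c\}$ and $B = \{p_d, p_e\}$ with a common point $x\in\conv A\cap\conv B$. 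Let $H$ be the plane through $p_a,p_b,p_c$; this is $H_T$ for $T=\{a,b,c\}$. The segment $\overline{p_d p_e}$ meets $H$ at the point $x$ (it must, since $x$ lies on both $\conv B = \overline{p_d p_e}$ and $\conv A \subseteq H$), and $x$ is not an endpoint of the segment because $p_d, p_e \notin H$ by general position. Hence $p_d$ and $p_e$ lie on strictly opposite sides of $H$, which is exactly the conclusion.

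I expect the only genuinely delicate point to be the claim that general position forces the Radon partition to have type $\{2,3\}$ rather than $\{1,4\}$: one must check that a $\{1,4\}$ partition would mean one point lies in the convex hull of the other four, and then a supporting-hyperplane / Carathéodory argument shows it would then lie in the hull of three of them — hence on the plane through those three — contradicting ``no four coplanar.'' This is a short convexity argument (Carathéodory in $\R^3$ gives membership in the hull of four points, and then a boundary-face argument reduces to three when the point is in the interior it is still in some tetrahedron, so one instead notes that if $p_5\in\conv\{p_1,\dots,p_4\}$ with the four in general position, the Radon partition of all five could still be taken of type $\{2,3\}$ by a separate application), so I would phrase the step to avoid over-reliance on the specific partition type and instead extract, from any common point $x$, a plane through exactly three of the points with the other two strictly separated — handling the bookkeeping of which points land on $H$ with the general-position hypothesis.
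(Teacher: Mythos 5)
Your ``cleanest unified version,'' which you say is what you would actually write, has a genuine gap at exactly the step you flag as delicate. The claim that general position forces the Radon partition to have type $\{2,3\}$ is false: take $p_5$ strictly inside the tetrahedron $\conv\{p_1,p_2,p_3,p_4\}$. This configuration is in general position (no four coplanar, no three collinear), and for $d+2$ points in general position in $\R^d$ the Radon partition is unique, determined by the signs of the unique affine dependence; here that dependence is $p_5=\sum_i\lambda_ip_i$ with all $\lambda_i>0$, so the unique Radon partition is $\{p_5\}\mid\{p_1,p_2,p_3,p_4\}$, of type $\{1,4\}$, and \emph{no} type-$\{2,3\}$ partition with intersecting hulls exists. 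Your attempted repair in the last paragraph does not work either: Carath\'eodory in $\R^3$ gives membership in the hull of four points, and a point interior to a tetrahedron is not in the hull of any three of its vertices, so there is no contradiction with ``no four coplanar,'' and one cannot ``take the Radon partition to be of type $\{2,3\}$ by a separate application'' in this case. The underlying issue is that the lemma asks only for a plane through three points separating the other two, which is strictly weaker than a type-$\{2,3\}$ Radon partition (the triangle need not meet the segment), so the interior-point configuration must be handled on its own terms.

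The fix is precisely the two-case argument you sketched first and then abandoned. If the five points are in convex position, no point lies in the hull of the other four, so the Radon partition is of type $\{2,3\}$ and your piercing argument is correct: the segment $\overline{p_dp_e}$ meets the plane of the triangle at an interior point of the segment, so $p_d,p_e$ are strictly separated. If instead some point, say $p_5$, lies in $\conv\{p_1,\dots,p_4\}$, general position forces it to be in the interior, and the plane through $p_5$ and the edge $p_1p_2$ lies strictly between the two facet planes through that edge in the pencil of planes about the line $p_1p_2$, hence separates $p_3$ from $p_4$; this needs to be stated and justified, not left as ``one checks.'' With both cases written out your proof is complete and correct, and it is genuinely different from the paper's: the paper normalizes by an affine transformation so the points are $0,e_1,e_2,e_3,p$ with $p$ having nonzero coordinates, and then splits on the sign pattern of $p$ with an explicit normal-vector computation, avoiding Radon's theorem and the convex-position dichotomy altogether.
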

\begin{proof}
Up to affine transformation we may assume that our set of points is $\{0, e_1, e_2, e_3, p\}$ where $p$ is a point none of whose coordinates are zero. We consider two cases. First suppose that one of the coordinates of $p$ is negative. By permuting our coordinates we can assume this is the last coordinate. Then choose $H = \spann\{e_1,e_2\}$. This contains the three points $0, e_1$, and $e_2$. Moreover since $e_3$ has positive last coordinate and $p$ has negative last coordinate, they lie on opposite sides of $H$ and the lemma follows.

Otherwise every coordinate of $p$ is positive. In this case, write $p=(x,y,z)$ and choose $H = \spann\{e_3, p\}$. Observe that $H$ contains the three points $0, e_3,$ and $p$, and that $v = (y,-x,0)$ is a normal vector to $H$. We see that $v\cdot e_1 >0$ and $v\cdot e_2< 0$, so the remaining two points $e_1$ and $e_2$ lie on opposite sides of $H$. This proves the result. 
\end{proof}

\begin{proposition}\label{prop:S5}
The code $\T_5$ does not have a\edit{n open convex} realization in $\R^3$. 
\end{proposition}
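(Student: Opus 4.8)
The plan is to argue by contradiction using the same strategy sketched before the lemma: suppose $\T_5$ has a convex open realization $\{U_1,\ldots,U_{10}\}$ in $\R^3$, and show that this forces $\T_3$ to be realizable in $\R^2$, contradicting Proposition \ref{prop:T3}. First I would pick witness points for the maximal codewords: $q_k\in U_{2k-1}\cap U_{2k}$ for $k=1,\ldots,5$, together with points $p_1\in U_1\cap U_3\cap\cdots\cap U_9$ and $p_2\in U_2\cap U_4\cap\cdots\cap U_{10}$ (these intersections are nonempty since the odd- and even-indexed sets form the sunflowers with centers $\{1,3,5,7,9\}$ and $\{2,4,6,8,10\}$). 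By perturbing slightly — using openness of the $U_i$ — I may assume the five points $q_1,\ldots,q_5$ are in general position in $\R^3$.

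Next I would apply Lemma \ref{lem:5points} to $\{q_1,\ldots,q_5\}$: there is a plane $H$ containing three of the $q_k$, say (after relabeling the indices $1,\ldots,5$, which permutes the pairs $\{2k-1,2k\}$ consistently) $q_3,q_4,q_5\in H$, with $q_1$ and $q_2$ strictly on opposite sides of $H$. Now consider the segment $\overline{q_1 q_2}$: since $q_1,q_2$ lie on opposite sides of $H$, this segment crosses $H$ at a point $r$. The key observation is that both sunflowers' petals can be cut down to $H$. For the odd sunflower, $q_3\in U_5$, $q_4\in U_7$, $q_5\in U_9$ already lie in $H$; and $q_1\in U_1\cap U_2$, $q_2\in U_3\cap U_4$ give, via the crossing point $r\in\overline{q_1q_2}$, a point of $H$ lying in — here I need to be careful, since $r$ need not lie in $U_1$ or $U_3$. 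The right move is instead to take $p_1,p_2$ into the picture: the segments from each $q_k$ through one of $p_1,p_2$ can be traced to cross $H$. Concretely, since $p_1$ lies in all odd sets, for each $k$ the segment $\overline{q_k p_1}\subseteq U_{2k-1}$; whichever side of $H$ contains $p_1$, at least two of $q_1,q_2$ lie on the opposite side or on $H$, so for those $k$ the segment $\overline{q_k p_1}$ meets $H$ in a point $r_k\in U_{2k-1}\cap H$. Symmetrically $\overline{q_k p_2}\subseteq U_{2k}$ meets $H$ for the appropriate $k$.

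So the concrete construction inside $H\cong\R^2$ is: intersect each $U_i$ with $H$. I would verify that the resulting plane sets $\{U_i\cap H\}$, suitably relabeled and with the three "already-in-$H$" petals $U_5,U_7,U_9$ (odd) and $U_6,U_8,U_{10}$ (even) kept, contain realizing witnesses for all of $\T_3$'s codewords: the pair intersections $U_5\cap U_6$, $U_7\cap U_8$, $U_9\cap U_{10}$ are witnessed by $q_3,q_4,q_5\in H$; the two "big" codewords $\{5,7,9\}$ and $\{6,8,10\}$ need a point of $H$ in $U_5\cap U_7\cap U_9$ and in $U_6\cap U_8\cap U_{10}$ — and this is exactly where the crossing points $r_k$ (for the odd side) and their even analogues, or alternatively a direct application of Theorem \ref{thm:sunflower} to the three points $r_3,r_4,r_5\in H$, provide the needed point; and we must check no forbidden overlaps $U_5\cap U_8$, etc., appear (they cannot, since $U_i\cap H\subseteq U_i$ and the $U_i$ already avoid those overlaps). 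Restricting the sets to a large disk in $H$ and relabeling $5,6,7,8,9,10 \mapsto 1,2,3,4,5,6$ then exhibits a realization of $\T_3$ in $\R^2$, contradicting Proposition \ref{prop:T3}.

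The main obstacle I anticipate is the bookkeeping in the previous paragraph: Lemma \ref{lem:5points} gives a plane through three of the $q_k$ with the other two separated, but I need the surviving three $q_k$ to be precisely a set of pair-witnesses that, together with the sunflower centers restricted to $H$, realize $\T_3$ — and I need to certify that the two triple-intersection codewords of $\T_3$ actually have witnesses in $H$, which requires invoking Theorem \ref{thm:sunflower} (the $d=2$ Sunflower Theorem) inside $H$ applied to the three petals' traces together with the three points $q_3,q_4,q_5$, or else carefully chasing which of $\overline{q_kp_1}$, $\overline{q_kp_2}$ cross $H$. Once the correct three indices are identified and these two triple-witnesses are produced, the disjointness checks are immediate from $U_i\cap H\subseteq U_i$, and the contradiction with Proposition \ref{prop:T3} closes the argument.
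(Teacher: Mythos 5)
Your overall strategy is the same as the paper's (pass to a plane $H$ through three of the pair-witnesses given by Lemma \ref{lem:5points}, restrict the six corresponding sets to $H$, and contradict Proposition \ref{prop:T3}), but the crucial step --- producing a point of $H$ in each of the two triple intersections $U_5\cap U_7\cap U_9$ and $U_6\cap U_8\cap U_{10}$ --- is not actually established by either of the mechanisms you offer, and this is a genuine gap rather than bookkeeping. Your segment-chasing idea only yields points of $H$ in a \emph{single} petal: since $q_1$ and $q_2$ lie strictly on opposite sides of $H$, at most one of the segments $\overline{q_1p_1},\overline{q_2p_1}$ crosses $H$ (your ``at least two of $q_1,q_2$'' cannot happen), and the crossing point lies only in $U_1\cap H$ or $U_3\cap H$ --- one of the four discarded petals --- not in any triple intersection, so it witnesses nothing you need. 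Your fallback, invoking the $d=2$ Sunflower Theorem inside $H$ applied to the traces $U_5\cap H, U_7\cap H, U_9\cap H$ with the points $q_3,q_4,q_5$, is circular: Theorem \ref{thm:sunflower} \emph{presupposes} that the collection is a sunflower, i.e.\ that its common intersection is nonempty, and the nonemptiness of $U_5\cap U_7\cap U_9\cap H$ is exactly the claim you are trying to prove. (Applying the $\R^3$ version to only the three points $q_3,q_4,q_5$ is likewise inadmissible, since in $\R^3$ one needs four sampled petals.)

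The paper closes this gap differently, and you need some such argument: apply Theorem \ref{thm:sunflower} in $\R^3$ to the two four-point samples $\{q_3,q_4,q_5,q_1\}$ and $\{q_3,q_4,q_5,q_2\}$. Each sample takes one point from four distinct petals of the five-petal odd sunflower (and likewise of the even one), and the common intersection of any four of those petals equals the full center by the structure of $\T_5$, so each of the two tetrahedra $\conv\{q_3,q_4,q_5,q_1\}$ and $\conv\{q_3,q_4,q_5,q_2\}$ contains a point of the odd center and a point of the even center. Since the base triangle lies in $H$ and the apexes $q_1,q_2$ lie strictly on opposite sides, the two tetrahedra lie in opposite closed halfspaces; convexity of each center then forces it to meet $H$. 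With those two points in hand, your restriction-and-relabeling argument (pair intersections witnessed by $q_3,q_4,q_5$, forbidden overlaps excluded because $U_i\cap H\subseteq U_i$) does go through and yields the desired contradiction with Proposition \ref{prop:T3}; note also that the auxiliary points $p_1,p_2$ you introduce are then unnecessary.
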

\begin{proof}
Suppose for contradiction that we have a\edit{n open convex} realization $\U = \{U_1, U_2,\ldots, U_{10}\}$ of $\T_5$ in $\R^3$. For $i=1,\ldots, 5$, choose a point $p_i$ in the open set $U_{2i-1}\cap U_{2i}$, such that all $p_i$ are in general position. Applying Lemma \ref{lem:5points} to these five points, we obtain a hyperplane $H$ \edit{which} contains three of them, and with the remaining two on opposite sides. By permuting the labels on our realization of $\T_5$, we may assume that $p_1, p_2,$ and $p_3$ all lie in $H$.

Now, consider the two tetrahedra $\Delta_1 = \conv\{p_1,p_2,p_3,p_4\}$ and $\Delta_2 = \conv\{p_1,p_2,p_3,p_5\}$. The vertices of these tetrahedra belong to distinct petals of the sunflowers $\{U_1, U_3,\ldots, U_9\}$ and $\{U_2, U_4, \ldots,U_{10}\}$, so by Theorem \ref{thm:sunflower} each of these tetrahedra contain a point in the center of both of these sunflowers. Since the tetrahedra lie on opposite sides of $H$, each of the centers of these two sunflowers contains a point on each side of $H$. But the center of a sunflower is convex, and so $H$ itself must contain a point in the center of each of the two sunflowers.

With this observation, consider the set $\V = \{V_1,\ldots, V_6\}$ where $V_i = U_i\cap H$. Since $H\cong \R^2$, we can regard this set as a\edit{n open} convex realization of a code in $\R^2$. We claim that in fact this code is $\T_3$. To verify this, it suffices to show that (i) $\{V_1, V_3, V_5\}$ and $\{V_2, V_4, V_6\}$ are both sunflowers and (ii) that $V_1\cap V_2$, $V_3\cap V_4$, and $V_5\cap V_6$ are nonempty, and that (iii) no other petals overlap. 

Condition (i) follows from the fact that the $V_i$ are subsets of the $U_i$ and that the sunflowers making up the realization of $\T_5$ both have centers that intersect $H$. Condition (ii) follows by considering the points $p_1, p_2$, and $p_3$, which all lie in the desired respective intersections.
Condition (iii) is a consequence of the fact that the the petals of the $U_i$ sunflowers overlap appropriately. 

However, this is a contradiction: $\T_3$ is not convex in $\R^2$ by Proposition \ref{prop:T3}. Thus $\T_5$ cannot be convex in $\R^3$. 
\end{proof}

\begin{corollary}\label{cor:smallvalues}
The sequence $t_n$ begins as follows:\[\begin{tabular}{|c||c|c|c|c|c|}
\hline $n$ & $1$ & $2$ & $3$ & $4$ & $5$\\\hline
$t_n$ & $1$ & $2$ & $3$ & $3$ & $4$\\\hline
\end{tabular}\]
\end{corollary}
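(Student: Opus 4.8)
The plan is to read the table off results already established, entry by entry; almost everything is bookkeeping, and the only genuinely new work is the lower bound $t_2\ge 2$, which is not handed to us by Proposition~\ref{prop:T3}, Proposition~\ref{prop:S5}, Proposition~\ref{prop:sublinear}, or the realizations in Example~\ref{ex:S1-S4} --- note that Theorem~\ref{thm:unbounded} only yields $t_2\ge\lceil 2/2\rceil = 1$.

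First I would recall $t_1 = 1$: Example~\ref{ex:S1-S4} realizes $\T_1 = \{\mathbf{12},1,2,\emptyset\}$ by two overlapping intervals in $\R^1$, and no realization exists in $\R^0$, since in a single-point space a nonempty $U_1$ would cover everything, leaving no point for the codeword $\emptyset$. For $t_2$: Example~\ref{ex:S1-S4} gives $t_2\le 2$, so it remains to exclude a realization of $\T_2$ in $\R^1$. Since the facets of $\Delta(\T_2)$ are $13,24,12,34$, the pairs $14$ and $23$ are not faces, and for any realization $\U$ one has $U_i\cap U_j\neq\emptyset$ precisely when $\{i,j\}\in\Delta(\T_2)$; so a hypothetical realization by open intervals $U_1,U_2,U_3,U_4\subseteq\R$ would have $U_2\cap U_3 = \emptyset$ and $U_1\cap U_4 = \emptyset$, while $U_1$ meets both $U_2$ and $U_3$ and likewise $U_4$ meets both $U_2$ and $U_3$. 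As $U_2$ and $U_3$ are disjoint intervals, one lies entirely on one side of the other, and any interval meeting both must contain the gap between them; hence $U_1$ and $U_4$ share a point in that gap, contradicting $U_1\cap U_4 = \emptyset$. (This is the classical fact that the $4$-cycle is not an interval graph.) Thus $t_2\ge 2$, so $t_2 = 2$.

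The remaining entries follow immediately. We have $t_3 = 3$ by Proposition~\ref{prop:T3}. For $t_4$, Proposition~\ref{prop:sublinear} gives $t_4\ge t_3 = 3$ and Example~\ref{ex:S1-S4} gives a realization in $\R^3$, so $t_4 = 3$. For $t_5$, Proposition~\ref{prop:sublinear} gives $t_5\le t_4 + 1 = 4$, and Proposition~\ref{prop:S5} says $\T_5$ has no realization in $\R^3$, so $t_5\ge 4$; hence $t_5 = 4$.

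The only step that takes any thought is $t_2\ge 2$, and it reduces to an elementary interval argument, so I anticipate no real obstacle; alternatively one could cite a forbidden-subgraph characterization of interval graphs applied to the intersection graph of $\T_2$.
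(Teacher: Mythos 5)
Your proof is correct and follows essentially the same route as the paper: upper bounds from the realizations in Example~\ref{ex:S1-S4}, lower bounds at $n=3,5$ from Propositions~\ref{prop:T3} and~\ref{prop:S5}, and Proposition~\ref{prop:sublinear} to pin down $t_4$ and $t_5$. Your interval-graph argument for $t_2\ge 2$ is simply a fully spelled-out version of the paper's one-line remark that any realization of $\T_2$ in $\R^1$ would contain a non-crossing loop, so there is nothing substantively different here.
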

\begin{proof}
Clearly $t_1 = 1$ since $\T_1$ is convex in $\R^1$ but has more than one codeword, so is not convex in $\R^0$. The code $\T_2$ has a realization in $\R^2$ as given in Example \ref{ex:S1-S4}, but has no realization in $\R^1$ since any realization contains a non-crossing loop. Thus $t_2 = 2$.

Note that $t_3 \le 3$ and $t_4\le 3$ by Example \ref{ex:S1-S4}, and both bounds are tight by Proposition \ref{prop:T3} and monotonicity of the $t_n$. By Proposition \ref{prop:S5} we know that $t_5\ge 4$, and simultaneously Proposition \ref{prop:sublinear} implies that $t_5 \le t_4+1 = 4$. This proves the result. 
\end{proof}

The proofs presented in Propositions \ref{prop:T3} and \ref{prop:S5} are both somewhat ad hoc and do not seem ripe for generalization. Determining $t_n$ for $n\ge 6$ remains an open problem, perhaps of significant difficulty.

\section{Contextualizing Our Results via Code Minors}\label{sec:pcode}

\edit{In this section we will situate our results in the framework of code morphisms and minors. We begin in Section \ref{subsec:minors} by recalling several basic definitions and results regarding morphisms and minors, following the most recent treatment of this material which appears in \cite[Chapters 3 and 4]{amziphdthesis}. In Section \ref{subsec:SnSDeltaTn} we explain how the families of codes $\S_n$, $\S_\Delta$, and $\T_n$ relate to one another in the framework of minors. We then generalize the codes $\S_n$ and $\S_\Delta$ as well as their accompanying results in Section \ref{subsec:SCoverD}.}

\subsection{Minors of Codes}\label{subsec:minors}

In \cite{morphisms}, we introduced a \edit{framework} of morphism\edit{s} for neural codes. \edit{Morphisms allow us to define a notion of ``minors" for codes, analogous to (but not a generalization of) minors of graphs or matroids.} Morphisms \edit{and minors} have a strong relationship to convexity \edit{(see Theorem \ref{thm:minorclosed} below)}, and provide a useful \edit{context} in which to state and compare results about convex neural codes. 

\edit{For the definition below, recall from Definition \ref{def:trunk} that a trunk in a code $\C$ is a set of the form $\Tk_\C(\sigma) \od \{c\in \C\mid \sigma\subseteq c\}$ for some $\sigma\subseteq [n]$.}

\begin{definition}\label{def:morphism}
Let $\C$ and $\D$ be codes. A function $f:\C\to\D$ is called a \emph{morphism} if the preimage of any \edit{proper} trunk in $\D$ under $f$ is a \edit{proper} trunk in $\C$. \edit{An \emph{isomorphism} is a morphism with an inverse function that is also a morphism.}
\end{definition}

\begin{definition}
We say that a code $\D$ is a \emph{minor} of a code $\C$ if there exists a \edit{surjective morphism $f:\C\to \D$.}
\end{definition}

\edit{The relation ``$\D$ is a minor of $\C$" forms a partial order on isomorphism classes of codes. We denote the resulting partially ordered set by $\ParCode$, and write $\D\le \C$ when $\D$ is a minor of $\C$. Note that morphisms and minors are defined in a purely combinatorial manner. However, minors can be used to understand geometric information about realizations of codes, as the following theorem indicates.}

\begin{theorem}\label{thm:minorclosed}
The following properties are \emph{minor-closed} (that is, if $\C$ has one of the properties below, then so does every $\D\le \C$):\begin{itemize}
\item Open convexity in $\R^d$,
\item Closed convexity in $\R^d$,
\item Non-degenerate open/closed convexity in $\R^d$ (see Definition \ref{def:nondegen}),
\item Intersection completeness.
\end{itemize}
\end{theorem}

\edit{Theorem \ref{thm:minorclosed} follows from a more general observation, first noted by Caitlin Lienkaemper: if $\C\subseteq 2^{[n]}$ is a code with a (possibly not convex or open) realization $\U = \{U_1,\ldots, U_n\}$ in a space $X$, then there is a bijection}
\[
\left\{\parbox{6em}{\begin{center}\edit{Minors of $\C$}\end{center}}\right\} \quad\longleftrightarrow \quad\left\{\parbox{9em}{\begin{center}\edit{Codes that can be\\realized in $X$ using sets of the form $U_\sigma$}\end{center}}\right\}. 
\]
\edit{For details on this result, see \cite[Section 4.2]{amziphdthesis} and \cite[Section 4]{matroids}.}

\edit{Theorem \ref{thm:minorclosed}} implies that intersection completeness is an isomorphism invariant, and that restricting \edit{our attention} to intersection complete codes amounts to restricting to a \edit{minor-closed family} in $\ParCode$. Throughout the rest of this section, we will examine exclusively intersection complete codes, with the partial order inherited from $\ParCode$. We will focus on the open embedding dimensions of these codes.

One can visualize \edit{minors as stratifying intersection complete codes} into different ``layers" \edit{according to their open embedding dimensions,} as \edit{sketched} in the figure below. \edit{In the figure, ``Open convex in $\R^d$" simply refers to codes whose open embedding dimension is equal to $d$.}\[
\includegraphics[width=30em]{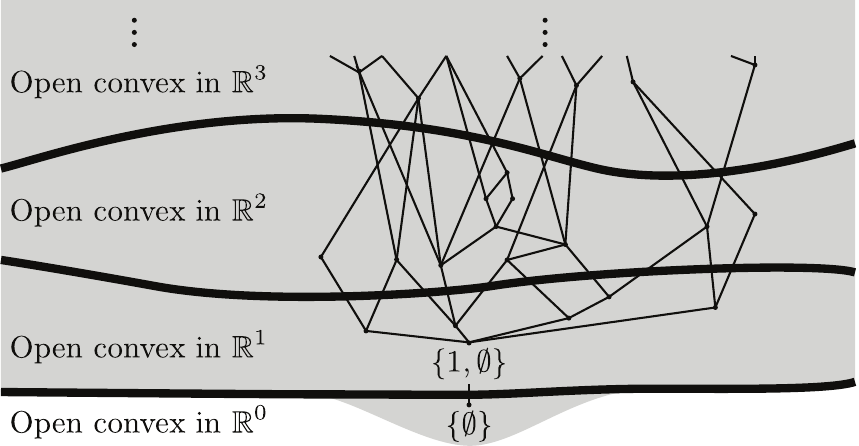}
\]
Note that $\{\emptyset\}$ is the only intersection complete code whose open embedding dimension is zero. For any $d\ge1$, however, there are infinitely many intersection complete codes with open embedding dimension $d$. 

The figure above is slightly misleading: each ``layer" of codes with open embedding dimension $d$ is not finitely thick. Indeed, each layer may contain chains that are infinitely long (``tall"), and antichains that are infinitely large (``wide"). 

\edit{Our aim in Section \ref{subsec:SnSDeltaTn}} will be to understand where the codes we have constructed in this paper sit inside this partial order. In Section \ref{subsec:SCoverD}, we will provide some more general examples \edit{and results} using Theorem \ref{thm:flexible}.

We will make heavy use of the following definition and proposition, which give a combinatorial description of all morphisms. For details, see \cite[Section 3.2]{amziphdthesis}.

\begin{definition}\label{def:determined}
Let $\C\subseteq 2^{[n]}$ be a code, and for $i\in[m]$ let $T_i\subseteq \C$ be a \edit{proper} trunk. The \emph{morphism determined by the trunks $\{T_1,\ldots, T_m\}$} is the map $f:\C\to 2^{[m]}$ given by $f(c) = \{i\in[m] \mid c\in T_i\}$.
\end{definition}

\begin{proposition}\label{prop:determined}
The map described in Definition \ref{def:determined} is a morphism from $\C$ to $2^{[m]}$. Moreover, every morphism arises in this way. Formally, for codes $\C\subseteq 2^{[n]}$ and $\D\subseteq 2^{[m]}$, and any morphism $f:\C\to \D$, $f$ is the morphism determined by the trunks $\{T_i\od f^{-1}(\Tk_\D(i))\mid i\in[m]\}$ (restricted to the range $\D$). Equivalently, for all $c\in \C$,\[
f(c) = \{i\in[m]\mid c\in f^{-1}(\Tk_\D(i))\}.
\]
\end{proposition}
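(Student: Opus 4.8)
The plan is to verify the two halves of Proposition \ref{prop:determined} separately: first that the map $f$ of Definition \ref{def:determined} is genuinely a morphism, and then that an arbitrary morphism is recovered by the displayed formula when one plugs in the trunks $T_i = f^{-1}(\Tk_\D(i))$.

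\emph{Step 1: the map of Definition \ref{def:determined} is a morphism.} Let $f\colon \C\to 2^{[m]}$ be given by $f(c) = \{i \mid c\in T_i\}$, where each $T_i$ is a trunk in $\C$. By Definition \ref{def:morphism} I must show that the preimage of any trunk in $2^{[m]}$ is a trunk in $\C$. A nonempty trunk in $2^{[m]}$ has the form $\Tk_{2^{[m]}}(\sigma) = \{\tau\subseteq[m] \mid \sigma\subseteq \tau\}$ for some $\sigma\subseteq[m]$. Unwinding the definition of $f$, a codeword $c\in\C$ satisfies $f(c)\in \Tk_{2^{[m]}}(\sigma)$ exactly when $\sigma\subseteq f(c)$, i.e.\ when $c\in T_i$ for every $i\in\sigma$. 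Hence $f^{-1}(\Tk_{2^{[m]}}(\sigma)) = \bigcap_{i\in\sigma} T_i$. So the key sub-claim is that a finite intersection of trunks is again a trunk (with the empty intersection, over $\sigma=\emptyset$, equal to all of $\C = \Tk_\C(\emptyset)$). This follows since if $T_i = \Tk_\C(\rho_i)$ then $\bigcap_i T_i = \{c\in\C \mid \rho_i\subseteq c\ \forall i\} = \Tk_\C\big(\bigcup_i \rho_i\big)$, and if any $T_i$ is empty the intersection is empty, which also counts as a trunk. This handles Step 1.

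\emph{Step 2: every morphism has this form.} Let $f\colon \C\to\D$ be an arbitrary morphism, with $\D\subseteq 2^{[m]}$, and set $T_i := f^{-1}(\Tk_\D(i))$ for $i\in[m]$. Each $T_i$ is a trunk in $\C$ because $f$ is a morphism and $\Tk_\D(i)$ is a (simple) trunk in $\D$. Let $g\colon\C\to 2^{[m]}$ be the morphism determined by $\{T_1,\dots,T_m\}$ as in Step 1; I must show $g = f$ as maps into $\D$, i.e.\ $g(c) = f(c)$ for all $c\in\C$. By definition $g(c) = \{i\in[m] \mid c\in T_i\} = \{i\in[m] \mid f(c)\in \Tk_\D(i)\} = \{i\in[m] \mid i\in f(c)\} = f(c)$, where the middle equality is just the definition of $T_i$ and the next is the definition of the simple trunk $\Tk_\D(i)$ inside $\D$. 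This is exactly the displayed formula in the proposition, so we are done.

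\emph{Main obstacle.} There is no deep difficulty here; the whole statement is essentially a bookkeeping exercise once one has the right description of trunks in a power set. The one point that deserves care is the sub-claim in Step 1 that a finite intersection of trunks is a trunk, together with the edge cases (empty intersection giving all of $\C$, and an empty factor giving $\emptyset$, both of which are permitted as trunks by Definition \ref{def:trunk}). A second mild subtlety is the clause ``restricted to the range $\D$'': the morphism determined by the $T_i$ a priori maps into $2^{[m]}$, and one should observe that its image in fact lands in $\D$ precisely because it agrees pointwise with $f$, whose codomain is $\D$; so no separate argument is needed beyond the computation $g(c)=f(c)$.
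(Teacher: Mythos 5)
Your proposal is correct. Note that the paper itself gives no proof of this proposition---it defers to \cite[Section 2]{morphisms}---so there is nothing to diverge from; your argument is the standard verification one would find there: the preimage of $\Tk_{2^{[m]}}(\sigma)$ is $\bigcap_{i\in\sigma}T_i$, which is a trunk since $\bigcap_i \Tk_\C(\rho_i)=\Tk_\C(\bigcup_i\rho_i)$ (with the empty and all-of-$\C$ edge cases handled as you do), and the identity $g(c)=f(c)$ follows from unwinding $T_i=f^{-1}(\Tk_\D(i))$ pointwise. Your closing remark about the clause ``restricted to the range $\D$'' is also the right reading: no separate argument is needed once $g$ agrees with $f$ pointwise.
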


\subsection{The codes $\S_n, \S_\Delta,$ and $\T_n$ in $\ParCode$}\label{subsec:SnSDeltaTn}

Let us begin by establishing a relationship between codes of the type $\S_n$ and the type $\S_\Delta$. Recall that $\S_n$ is a special case of $\S_\Delta$---in particular, $\S_n = \S_\Delta$ where $\Delta$ is $n$ points. More generally, we have the following:

\begin{proposition}\label{prop:SDeltaSn} Let $\Delta\subseteq 2^{[n]}$ be a simplicial complex with $m$ facets. Then there exists a surjective morphism $\S_\Delta\to \S_{m}$. In particular, $\S_{m} \le \S_\Delta$.  
\end{proposition}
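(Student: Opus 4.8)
The plan is to construct an explicit surjective morphism $f\colon \S_\Delta \to \S_m$ using Proposition~\ref{prop:determined}: it suffices to name, for each element $i$ of the target index set $[m+1]$, a trunk $T_i\subseteq \S_\Delta$, and then verify that the morphism determined by $\{T_1,\ldots,T_{m+1}\}$ has image exactly $\S_m$. Recall that $\S_\Delta = (\Delta\ast(n+1))\cup\{[n]\}$ has maximal codewords $F_1\cup\{n+1\},\ldots,F_m\cup\{n+1\}$ (where $F_1,\ldots,F_m$ are the facets of $\Delta$) together with $[n]$, and that $\S_m\subseteq 2^{[m+1]}$ has maximal codewords $\{1,\ldots,m\}$ together with $\{j,m+1\}$ for $1\le j\le m$. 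The natural guess: let $T_j \od \Tk_{\S_\Delta}(v_j)$ be a simple trunk cutting out the codewords lying in the $j$-th ``petal'' (I would pick, for each facet $F_j$, a vertex $v_j$ that distinguishes $F_j$ — more carefully, I want the trunk of codewords $c$ with $c\subseteq F_j\cup\{n+1\}$ but $c\not\subseteq F_{j'}\cup\{n+1\}$ for $j'\ne j$; this is a trunk since intersection complete codes have all trunks generated by single sets up to the structure available, and in fact $\Tk_{\S_\Delta}(\sigma)$ for suitable $\sigma$ works), and let $T_{m+1}\od \Tk_{\S_\Delta}(n+1)$.

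The key steps, in order: (1) Verify that each $T_j$ ($1\le j\le m$) is genuinely a trunk in $\S_\Delta$ — this should follow directly from Definition~\ref{def:trunk}, taking $T_j = \Tk_{\S_\Delta}(\sigma_j)$ where $\sigma_j$ is chosen inside $F_j$ to separate it from the other facets (since $\Delta$ has $m$ distinct facets, such separating faces exist; if no single vertex works one may need $\sigma_j$ of larger size, which is still fine). (2) Verify $T_{m+1}=\Tk_{\S_\Delta}(n+1)$ collects exactly the coned codewords $\sigma\cup\{n+1\}$ with $\sigma\in\Delta$. (3) Compute $f(c)$ for each $c\in\S_\Delta$ and check membership of $f(c)$ in $\S_m$: the codeword $[n]$ maps to $\{1,\ldots,m\}$ (it lies in every $T_j$ but not $T_{m+1}$, provided each $\sigma_j\subseteq [n]$); a facet $F_j\cup\{n+1\}$ maps to $\{j,m+1\}$; smaller coned codewords map to subsets of these, i.e.\ singletons $\{j\}$, $\{m+1\}$, or $\emptyset$, all of which lie in $\S_m$. (4) Check surjectivity: exhibit a preimage for each of $\{1,\ldots,m\}$, each $\{j,m+1\}$, each singleton, and $\emptyset$ — the maximal codewords of $\S_\Delta$ hit the maximal codewords of $\S_m$, and the lower codewords / the minimal codeword of $\S_\Delta$ handle the rest. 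The final sentence $\S_m\le\S_\Delta$ is then immediate from the definition of the minor relation, since a surjective morphism is one of the two allowed moves.

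The main obstacle I anticipate is step~(1): choosing the separating faces $\sigma_j\subseteq F_j$ correctly and confirming that $\Tk_{\S_\Delta}(\sigma_j)$ picks out precisely the codewords I want and no extras. Because $\Delta$ is an arbitrary simplicial complex, a single vertex $v_j\in F_j$ need not distinguish $F_j$ from all other facets (e.g.\ if $F_j\subseteq F_{j'}$ is impossible among facets, but facets can share vertices in complicated ways). The clean fix is to take $\sigma_j$ to be \emph{any} face of $\Delta$ contained in $F_j$ and in no other facet — such a face exists precisely because $F_j$ is a facet not contained in any other facet, so $F_j$ itself works if nothing smaller does, and then $\Tk_{\S_\Delta}(\sigma_j) = \Tk_{\S_\Delta}(F_j) = \{F_j, F_j\cup\{n+1\}\}$. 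One then has to double-check that with $\sigma_j=F_j$ the value $f([n])$ still contains $j$ (yes, since $F_j\subseteq[n]$ so $[n]\in\Tk_{\S_\Delta}(F_j)$) and that the images still land in $\S_m$; a brief case analysis of which codewords lie in which $T_j$ finishes the argument. Everything else is routine bookkeeping with Proposition~\ref{prop:determined}.
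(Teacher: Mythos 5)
Your proposal is correct and, after the initial detour about ``separating faces,'' lands on exactly the paper's argument: take $T_j=\Tk_{\S_\Delta}(F_j)$ for the facets $F_1,\ldots,F_m$ and $T_{m+1}=\Tk_{\S_\Delta}(n+1)$, apply Proposition~\ref{prop:determined}, and check that the images of the three types of codewords of $\S_\Delta$ are precisely the codewords of $\S_m$. The only slip is your parenthetical claim $\Tk_{\S_\Delta}(F_j)=\{F_j,\,F_j\cup\{n+1\}\}$, which omits $[n]$, but you immediately correct this when verifying $f([n])\ni j$, so the argument as a whole is sound and matches the paper's proof.
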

\begin{proof}
Let $F_1,\ldots, F_m$ be the facets of $\Delta$. For $i\in[m]$ define $T_i = \Tk_{\S_\Delta}(F_i)$, and define $T_{m+1} = \Tk_{\S_\Delta}(n+1)$. We claim that $\S_m$ is the image of $\S_{\Delta}$ under the morphism $f$ determined by the trunks $\{T_1,T_2,\ldots, T_{m+1}\}$. Recall from Definition \ref{def:SDelta} that the codewords of $\S_\Delta$ are:\begin{itemize}
\item $\sigma$ for $\sigma\in\Delta$,
\item $\sigma \cup \{n+1\}$ for $\sigma\in \Delta$, and
\item $[n]$. 
\end{itemize}
The images of these codewords under $f$ are as follows:\begin{itemize}
\item $f(\sigma)$ is equal to $\emptyset$ if $\sigma$ is not a facet of $\Delta$, and equal to $\{i\}$ if $\sigma = F_i$,
\item $f(\sigma\cup\{n+1\})$ is equal to $\{m+1\}$ if $\sigma$ is not a facet of $\Delta$, and equal to $\{i, m+1\}$ if $\sigma = F_i$, and
\item $f([n]) = [m]$ since $[n]$ contains all facets of $\Delta$, but does not contain $m+1$.
\end{itemize}
But comparing these \edit{images} to Definition \ref{def:Sn}, we see that these are exactly the codewords of $\S_m$, proving the result.
\end{proof}

\begin{remark}\label{rem:Sm} One way to think of Proposition \ref{prop:SDeltaSn} is as follows. The set \[\{\S_\Delta\mid \Delta\text{ is a simplicial complex with $m$ facets}\}\] inherits a partial order from $\ParCode$, and with this inherited order $\S_m$ is the unique minimal element of the set. Theorem \ref{thm:SDelta} says that for $m\ge 2$ all of these live in the ``layer" of codes with open embedding dimension $m$. We can visualize this situation as follows. \[
\includegraphics[width=18em]{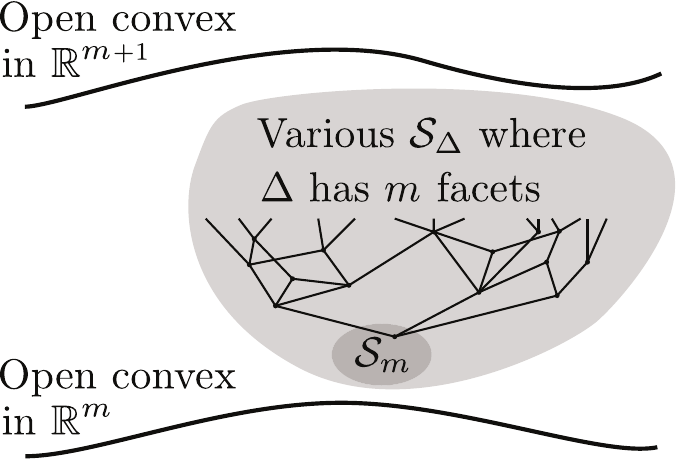}
\]\end{remark}

It is also worth noting the following, regarding the codes $\T_n$ described in Section \ref{sec:Tn}.
\begin{proposition}\label{prop:Tnchain}
For any $n\ge 1$, $\T_n\le \T_{n+1}$. In particular, the codes $\{\T_n\mid n\ge 1\}$ form a chain in $\ParCode$.
\end{proposition}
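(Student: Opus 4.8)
The plan is to exhibit, for each $n\ge 1$, a surjective morphism $\T_{n+1}\to \T_n$; by the definition of minor this immediately gives $\T_n\le \T_{n+1}$, and then transitivity of the partial order on $\ParCode$ yields the chain $\T_1\le \T_2\le \T_3\le\cdots$. By Proposition \ref{prop:determined}, to build such a morphism it suffices to specify trunks $T_1,\ldots, T_{2n}$ in $\T_{n+1}$ and check that the induced map $f(c)=\{i\mid c\in T_i\}$ has image exactly $\T_n$. The natural guess is to ``forget" the last coordinate pair: take $T_j\od \Tk_{\T_{n+1}}(j)$ for each $j\in[2n]$, i.e. the simple trunks of the first $2n$ neurons of $\T_{n+1}\subseteq 2^{[2n+2]}$.

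The main step is then a direct computation of $f(c)$ for each codeword $c$ of $\T_{n+1}$, organized by the four types in Definition \ref{def:Tn}. For a codeword of type (i) in $\T_{n+1}$ — namely $\{2k-1,2k\}$ with $1\le k\le n$ — it contains only neurons $2k-1,2k\le 2n$, so $f$ maps it to $\{2k-1,2k\}$, which is a type (i) codeword of $\T_n$; the one exceptional type (i) codeword $\{2n+1,2n+2\}$ of $\T_{n+1}$ contains no neuron in $[2n]$ and so maps to $\emptyset$. The odd codeword $\{1,3,\ldots,2n+1\}$ of type (ii) in $\T_{n+1}$ intersects $[2n]$ in $\{1,3,\ldots,2n-1\}$, so $f$ sends it to the odd type (ii) codeword of $\T_n$, and symmetrically the even type (ii) codeword maps to the even type (ii) codeword of $\T_n$. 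Each singleton $\{j\}$ with $j\le 2n$ maps to $\{j\}$, while the singletons $\{2n+1\}$ and $\{2n+2\}$ map to $\emptyset$; and $\emptyset$ maps to $\emptyset$. Collecting these images gives exactly the codeword list of $\T_n$ from Definition \ref{def:Tn}, so $f$ is surjective onto $\T_n$.

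The one genuine thing to verify, rather than the bookkeeping above, is that $f$ really is a morphism, but this is handed to us by Proposition \ref{prop:determined}: any map determined by a collection of trunks is automatically a morphism. Since a realization of $\T_{n+1}$ restricted appropriately recovers $\T_n$ (indeed this is essentially the deletion of $U_{2n+1},U_{2n+2}$ used in the proof of Proposition \ref{prop:sublinear}), there is no subtlety — the only potential pitfall is an off-by-one or parity error in tracking which neurons survive, which the type-by-type casework above is designed to rule out. Thus I expect no serious obstacle; the ``hard part" is merely presenting the image computation cleanly.
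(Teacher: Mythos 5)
Your proof is correct and is essentially the paper's argument: the map determined by the simple trunks $\Tk_{\T_{n+1}}(j)$ for $j\in[2n]$ is exactly the restriction morphism $c\mapsto c\cap[2n]$, which the paper uses (citing \cite{morphisms} for its morphism property rather than Proposition \ref{prop:determined}), and your type-by-type image computation just makes the surjectivity onto $\T_n$ explicit.
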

\begin{proof}
Given a set of neurons $\sigma\subseteq [n]$, one can form a natural ``restriction" of a code $\C\subseteq 2^{[n]}$ by mapping $c\mapsto c\cap \sigma$.  This restriction is a morphism (see \cite[Section 2]{morphisms}). In the case of the codes $\T_n\subseteq 2^{[2n]}$, one can note that $\T_n$ is the image of $\T_{n+1}$ under the restriction map with $\sigma = [2n]\subseteq [2n+2]$. This surjective morphism from $\T_{n+1}$ to $\T_n$ implies that $\T_n\le \T_{n+1}$ as desired.
\end{proof}

\subsection{Generalizing $\S_n$ and $\S_\Delta$ using Theorem \ref{thm:flexible}}\label{subsec:SCoverD}

We begin with a definition generalizing that of $\S_\Delta$. \begin{definition}\label{def:SCoverD}
Let $\D\subseteq \C\subseteq 2^{[n]}$ be intersection complete codes. We define \[
\S_{\C/\D} \od \C \cup \{[n]\}\cup \{d\cup\{n+1\}\mid d\in \D\} \subseteq 2^{[n+1]}.
\] 
Note that choosing $\D = \{\text{minimal nonempty codewords in $\C$}\}$\edit{$\cup \{\emptyset\}$} always satisfies the above conditions. In this case, we will let $\S_{\C/\text{min}}$ denote $\S_{\C/\D}$. 
\end{definition}

Qualitatively, $\S_{\C/\D}$ is the result of forming a flexible sunflower using the codewords in $\C$, and then ``gluing" the petals of that sunflower to a new set $U_{n+1}$ along codewords in $\D$. Observe that $\S_\Delta$ of Definition \ref{def:SDelta} is equal to $\S_{\Delta/\Delta}$ in this notation. Also, if $\C = \{\{1\},\{2\},\ldots, \{n\}, \emptyset\}$, then we see $\S_n$ of Definition \ref{def:Sn} is equal to $\S_{\C/\text{min}}$. 

\begin{proposition}
Let $\D\subseteq \C\subseteq 2^{[n]}$ be intersection complete codes.  The code $\S_{\C/\D}$ is intersection complete. If $\D$ has $m$ maximal codewords and does not contain $[n]$, then $\S_{\C/\D}$ has $m+1$ maximal codewords. In particular, $\odim(\S_{\C/\D}) \le \max\{2,m\}$. 
\end{proposition}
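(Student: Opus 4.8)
The plan is to prove the three assertions in sequence, mirroring the arguments used for Proposition \ref{prop:SDelta}, which handles the special case $\S_\Delta = \S_{\Delta/\Delta}$. For intersection completeness, the key observation is that $\S_{\C/\D}$ is built from $\C$ by adjoining the single codeword $[n]$ and then a ``parallel copy'' of $\D$ with the neuron $n+1$ attached. I would first show that $\C \cup \{[n]\}$ is intersection complete: since $\C$ is intersection complete and $[n]$ contains every codeword, intersecting $[n]$ with any $c \in \C$ gives $c \in \C$, so no new intersections arise. Then I would analyze intersections involving the new codewords $d \cup \{n+1\}$ with $d \in \D$. Such a codeword intersected with another $d' \cup \{n+1\}$ gives $(d \cap d') \cup \{n+1\}$; since $\D$ is intersection complete, $d \cap d' \in \D$, so this is again of the required form. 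A codeword $d \cup \{n+1\}$ intersected with a codeword $c \in \C \cup \{[n]\}$ gives $d \cap c$ (the neuron $n+1$ drops out); since $\D \subseteq \C$ and $\C$ is intersection complete, $d \cap c \in \C \subseteq \S_{\C/\D}$. Thus all pairwise intersections land in $\S_{\C/\D}$, establishing intersection completeness.

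For the count of maximal codewords, I would argue as in Proposition \ref{prop:SDelta}. Assume $\D$ has $m$ maximal codewords and $[n] \notin \D$. The candidates for maximality are: the codeword $[n]$; the codewords $d \cup \{n+1\}$ with $d$ maximal in $\D$; and any codeword of $\C$ not contained in one of these. I would check first that $[n]$ is maximal in $\S_{\C/\D}$: it is the unique codeword of $\S_{\C/\D}$ containing $n$ but not $n+1$ among the large ones, and it is not contained in any $d \cup \{n+1\}$ since the latter does not contain all of $[n]$ (as $d \in \D$ and $[n] \notin \D$, so $d \subsetneq [n]$). Next, each $d \cup \{n+1\}$ with $d$ maximal in $\D$ is maximal: it is not contained in $[n]$ (it contains $n+1$), and any larger codeword of $\S_{\C/\D}$ containing it would have the form $d' \cup \{n+1\}$ with $d \subsetneq d'$, $d' \in \D$, contradicting maximality of $d$ in $\D$. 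Finally, every codeword $c \in \C$ is contained either in $[n]$ (if $n+1 \notin c$, which always holds since $c \subseteq [n]$) — so in fact every codeword of $\C$ is contained in $[n]$, hence not maximal in $\S_{\C/\D}$. This gives exactly $m + 1$ maximal codewords. The bound $\odim(\S_{\C/\D}) \le \max\{2, m\}$ then follows immediately from \cite[Theorem 1.2]{openclosed}, since $\S_{\C/\D}$ is intersection complete with $m+1$ maximal codewords.

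I expect the main obstacle to be bookkeeping rather than any deep difficulty: one must carefully verify the edge cases in the maximal-codeword count, in particular confirming that no codeword of $\C$ accidentally becomes maximal (it cannot, since every codeword of $\C$ is a subset of $[n] \in \S_{\C/\D}$) and that the hypothesis $[n] \notin \D$ is exactly what is needed to keep $[n]$ and the codewords $d \cup \{n+1\}$ from collapsing. A secondary subtlety is that $\D$ is required to be intersection complete (not merely a subset of $\C$) precisely so that the intersection $(d \cap d') \cup \{n+1\}$ stays in $\S_{\C/\D}$; I would make sure to flag where this hypothesis is used.
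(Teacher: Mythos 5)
Your proposal is correct and follows essentially the same route as the paper: a case analysis of pairwise intersections by codeword type (using intersection completeness of $\C$ and $\D$ and the containment $\D\subseteq\C$), followed by checking that $[n]$ and the sets $d\cup\{n+1\}$ for $d$ maximal in $\D$ are exactly the maximal codewords, and then citing \cite[Theorem 1.2]{openclosed}. Your maximality check is in fact spelled out in more detail than the paper's, and the only cosmetic caveat is that when $[n]\in\C$ it coincides with the adjoined codeword $[n]$, which does not affect the count.
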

\begin{proof}
Codewords in $\S_{\C/\D}$ come in three types: codewords from $\C$, the codeword $[n]$, and those of the form $d\cup\{n+1\}$ where $d\in\D$. Since $\C$ and $\D$ are intersection complete, the intersection of two codewords of the same type always yields another codeword of that type (and hence lying in $\S_{\C/\D}$). This leaves the intersections of codewords of different types. The intersection of a codeword in $\C$ with $[n]$ is simply the same codeword in $\C$. The intersection of $d\cup\{n+1\}$ with $[n]$ is just $d$, which lies in $\S_{\C/\D}$ since $\D\subseteq \C$. Finally, the intersection of $c\in\C$ with $d\cup\{n+1\}$ is $c\cap d$, which lies in $\C$ since since $\C$ is intersection complete.

For the second part of the statement, note that if $d$ is a maximal codeword of $\D$, then $d\cup\{n+1\}$ is a maximal codeword of $\S_{\C/\D}$. Since $[n]\notin\D$, the codeword $[n]$ is also a maximal codeword of $\S_{\C/\D}$, yielding $m+1$ total maximal codewords. The bound on $\odim(\S_{\C/\D})$ follows immediately from \cite[Theorem 1.2]{openclosed}.
\end{proof}

The following proposition provides a generalization of Theorem \ref{thm:sunflowercodeversion} to the codes $\S_{\C/\text{min}}$.

\begin{proposition}\label{prop:SCovern}
Let $\C\subseteq 2^{[n]}$ be an intersection complete code \edit{that} contains every singleton set. Then \[\odim(\S_{\C/\text{min}}) \ge \left\lceil \frac{n}{\dim(\Delta(\C))+1}\right\rceil.\] 
\end{proposition}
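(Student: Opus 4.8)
The plan is to mimic the proof of Theorem~\ref{thm:sunflowercodeversion}, but to work with a $k$-flexible sunflower rather than an ordinary sunflower, and to apply Theorem~\ref{thm:flexible} in place of Theorem~\ref{thm:sunflower}. Set $k = \dim(\Delta(\C))+1$ and $d = \left\lceil n/k\right\rceil - 1$; we must show $\S_{\C/\text{min}}$ has no open realization in $\R^{d}$. Suppose for contradiction that $\U = \{U_1,\ldots,U_{n+1}\}$ is such a realization. Because $\C$ contains every singleton, the minimal nonempty codewords of $\C$ are exactly the singletons, so $\S_{\C/\text{min}}$ contains $\{i,n+1\}$ for every $i\in[n]$; thus $U_{n+1}$ meets each $U_i$ with $i\in[n]$. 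Meanwhile the codewords of $\S_{\C/\text{min}}$ involving only neurons in $[n]$ are exactly the codewords of $\C$ together with $[n]$, so $\{U_1,\ldots,U_n\}$ has $[n]$ as a codeword and every other codeword of weight at most $\dim(\Delta(\C))+1 = k$ (any codeword of $\{U_1,\dots,U_n\}$ is a codeword of $\C$ or is $[n]$, and codewords of $\C$ have weight at most $\dim(\Delta(\C))+1$). Hence $\{U_1,\ldots,U_n\}$ is a $k$-flexible sunflower in $\R^d$ with center $U_{[n]}$.

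The key step is then to invoke Theorem~\ref{thm:flexible}. We have $n \ge d k + 1$ precisely because $d = \lceil n/k\rceil - 1$ forces $dk = (\lceil n/k\rceil - 1)k < n$, so $dk + 1 \le n$. For each $i\in[n]$ choose $p_i \in U_i \cap U_{n+1}$ (nonempty as noted above). By Theorem~\ref{thm:flexible}, $\conv\{p_1,\ldots,p_n\}$ contains a point in the center $U_{[n]}$ of the sunflower. But $\conv\{p_1,\ldots,p_n\}\subseteq U_{n+1}$ since $U_{n+1}$ is convex and contains each $p_i$; therefore $U_{n+1}\cap U_{[n]}\neq\emptyset$, i.e.\ $[n+1]\in\code(\U) = \S_{\C/\text{min}}$. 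This is a contradiction, since $[n+1]$ is not a codeword of $\S_{\C/\text{min}}$ (the definition of $\S_{\C/\D}$ adds $[n]$ but never $[n+1]$). Hence no realization in $\R^d$ exists, giving $\odim(\S_{\C/\text{min}}) \ge d+1 = \lceil n/k\rceil$ as desired.

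The only mild subtlety — and the step I would write out most carefully — is verifying that $\{U_1,\ldots,U_n\}$ is genuinely a $k$-flexible sunflower, i.e.\ checking precisely which codewords $\code(\{U_1,\ldots,U_n\})$ can contain. The point is that deleting the petal $U_{n+1}$ from the realization $\U$ corresponds to the restriction morphism on codes, so $\code(\{U_1,\ldots,U_n\})$ is the image of $\S_{\C/\text{min}}$ under the map $c\mapsto c\cap[n]$; its codewords are therefore $\{c\cap[n]\mid c\in\S_{\C/\text{min}}\} = \C\cup\{[n]\}$. Every element of $\C$ has weight at most $\dim(\Delta(\C))+1 = k$, and $[n]$ is the required full codeword, so the $k$-flexible sunflower condition holds on the nose. (If one prefers to avoid citing the morphism language here, the same conclusion follows directly: a nonempty atom of $\{U_1,\dots,U_n\}$ either lies in $U_{n+1}$ or not, and in either case it is contained in some atom of $\U$, whose index set intersected with $[n]$ is a codeword of $\S_{\C/\text{min}}$ restricted to $[n]$.) Everything else is a direct transcription of the argument already used for $\S_n$.
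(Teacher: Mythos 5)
Your proposal is, for the main case, exactly the paper's argument: delete $U_{n+1}$, observe that $\code(\{U_1,\ldots,U_n\})=\C\cup\{[n]\}$ is a $k$-flexible sunflower with $k=\dim(\Delta(\C))+1$, pick $p_i\in U_i\cap U_{n+1}$, and play Theorem \ref{thm:flexible} against the fact that $U_{n+1}$ must miss the center; the arithmetic $n\ge dk+1$ for $d=\lceil n/k\rceil-1$ and the verification of the flexible-sunflower condition via the restriction morphism are both correct. For $n\ge 2$ with $[n]\notin\C$ (which forces $d\ge 1$) this is a complete proof.

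There is, however, a genuine (if small) gap at the degenerate end. Your contradiction rests on the parenthetical claim that $[n+1]$ is never a codeword of $\S_{\C/\mathrm{min}}$, but when $n=1$ the unique minimal nonempty codeword of $\C=\{1,\emptyset\}$ is $[n]=\{1\}$ itself, so $[n+1]=\{1,2\}$ \emph{is} a codeword of $\S_{\C/\mathrm{min}}=\{12,1,2,\emptyset\}$ and no contradiction is reached; this is precisely why the paper treats $n=1$ as a separate degenerate case before running the sunflower argument. Relatedly, whenever $\lceil n/k\rceil=1$ (i.e.\ $n=1$ or $[n]\in\C$) your argument takes place in $\R^0$, where Theorem \ref{thm:flexible} is only trivially applicable (its proof uses separating hyperplanes); in that regime the claimed bound is just $\odim(\S_{\C/\mathrm{min}})\ge 1$, which is better handled directly by noting that $\S_{\C/\mathrm{min}}$ has more than one nonempty codeword. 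With that one-line repair, your proof coincides with the paper's.
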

\begin{proof}
We start with a degenerate case: if $n=1$, then $\C = \{\emptyset, 1\}$ and $\S_{\C/\text{min}} = \{12, 1, 2, \emptyset\}$. In this case $\odim(\S_{\C/\text{min}}) = 1$, while $n = 1$ and $\dim(\Delta(\C)) + 1 = 1$. We see that the bound given above is satisfied as desired.

Otherwise, $n\ge 2$. In this case, let $\{U_1,\ldots, U_{m+1}\}$ be an open convex realization of $\S_{\C/\text{min}}$ in $\R^d$. Since the minimal nonempty codewords of $\C$ are all singletons, the code $\S_{\C/\text{min}}$ consists of codewords from $\C$, the codeword $[n]$, codewords of the form $\{i, n+1\}$ where $i\in[n]$, and lastly the codeword $\{n+1\}$. Since $[n]$ is a codeword, the sets $\{U_1,\ldots, U_n\}$ all meet in a central point. In particular, $\{U_1,\ldots, U_n\}$ is a $k$-flexible sunflower, where $k$ is the largest weight of a codeword in $\C$ other than possibly $[n]$. In particular $k \le \dim(\Delta(\C))+1$, with equality if $[n]\notin \C$. 

 But consider the set $U_{n+1}$. This set does not meet $U_{[n]}$ since $[n+1]$ is not a codeword of $\S_{\C/\text{min}}$. However, it does touch each $U_i$ since $\{i, n+1\}$ is a codeword. If we choose $p_i\in U_i\cap U_{n+1}$, then the convex hull of $\{p_1,\ldots, p_n\}$ is contained in $U_{n+1}$ and therefore does not contain a point in the center of $\{U_1,\ldots, U_n\}$. By Theorem \ref{thm:flexible}, such a sampling of $p_i$ cannot be chosen if $n\ge dk+1$. Therefore we must have $n\le dk$. Rearranging, this implies $d\ge \lceil n/k\rceil$. Using the inequality $k\le \dim(\Delta(\C))+1$ yields the result.
\end{proof}

The added assumption in Proposition \ref{prop:SCovern} that $\C$ contains all singletons is not too restrictive, since adding singletons to an intersection complete code always maintains intersection completeness. 

Continuing our pattern of generalizations, the proposition below is analogous to Theorem \ref{thm:SDelta} and its second part generalizes Proposition \ref{prop:SDeltaSn}. 

\begin{proposition}\label{prop:SCoverDSE}
Let $\D\subseteq \C\subseteq 2^{[n]}$ be intersection complete codes.  Let $m\ge 2$ be the number of maximal codewords in $\D$, and let $k$ be the largest number of maximal codewords in $\D$ whose union lies in $\Delta(\C)$. Then there exists an intersection complete code $\E\subseteq 2^{[m]}$ containing all singleton sets such that (i) $k=\dim(\Delta(\E))+1$, and (ii) there exists a surjective morphism $\S_{\C/\D}\to \S_{\E/\text{min}}$. In particular,  $\S_{\E/\text{min}} \le \S_{\C/\D}$ and $m\ge \odim(\S_{\C/\D}) \ge \left\lceil \frac{m}{k}\right\rceil$.
\end{proposition}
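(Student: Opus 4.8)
The plan is to obtain $\E$ as a morphic image of $\C$, and then to realize $\S_{\E/\text{min}}$ as a morphic image of $\S_{\C/\D}$ using ``the same'' morphism together with one extra trunk. Write $D_1,\ldots,D_m$ for the maximal codewords of $\D$ (note $[n]\notin\D$, since otherwise $[n]$ would be the unique maximal codeword of $\D$ and $m=1$). Let $g\colon\C\to 2^{[m]}$ be the morphism determined, in the sense of Proposition~\ref{prop:determined}, by the trunks $\{\Tk_\C(D_i)\mid i\in[m]\}$, so that $g(c)=\{i\in[m]\mid D_i\subseteq c\}$ for all $c\in\C$, and set $\E\od g(\C)$. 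Since morphic images of intersection complete codes are intersection complete, $\E$ is intersection complete; and since distinct maximal codewords of $\D$ are incomparable we have $g(D_j)=\{j\}$, so $\E$ contains every singleton of $[m]$. (There is also an explicit description $\E=\{\sigma\mid\bigcup_{i\in\sigma}D_i\in\Delta(\C)\}$ one might try to use, but that complex need not be a morphic image of $\C$, so I will avoid it.)

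Next I would check (i). For any $c\in\C$ the union $\bigcup_{i\in g(c)}D_i$ is a subset of $c\in\Delta(\C)$ and hence lies in $\Delta(\C)$; by the definition of $k$ this forces $|g(c)|\le k$, so $\dim(\Delta(\E))+1\le k$. Conversely, picking $k$ maximal codewords of $\D$ whose union $w$ lies in $\Delta(\C)$, and then $c\in\C$ with $w\subseteq c$, we get $g(c)\supseteq\{i\mid D_i\subseteq w\}$ of size $\ge k$. Hence $\E$ has a codeword of weight exactly $k$ and none larger, i.e.\ $k=\dim(\Delta(\E))+1$.

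For (ii), let $f\colon\S_{\C/\D}\to 2^{[m+1]}$ be the morphism determined by the trunks $\{\Tk_{\S_{\C/\D}}(D_i)\mid i\in[m]\}\cup\{\Tk_{\S_{\C/\D}}(n+1)\}$, and compute $f$ on the three kinds of codewords of $\S_{\C/\D}$. Since no codeword of $\C$ contains $n+1$, we get $f(c)=g(c)$ for $c\in\C$ and $f([n])=[m]$. For a codeword $d\cup\{n+1\}$ with $d\in\D$ we get $f(d\cup\{n+1\})=\{i\mid D_i\subseteq d\}\cup\{m+1\}$, and here the key point is that a codeword $d\in\D$ contains a maximal codeword $D_i$ of $\D$ only when $d=D_i$ (if $d\subseteq D_\ell$ with $D_\ell$ maximal, then $D_i\subseteq D_\ell$ forces $D_i=D_\ell$, hence $d=D_i$). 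Thus $f(d\cup\{n+1\})$ is $\{j,m+1\}$ when $d=D_j$ and $\{m+1\}$ otherwise. Since $\E$ contains all singletons, its minimal nonempty codewords are exactly the $m$ singletons, so the image of $f$ is precisely $\E\cup\{[m]\}\cup\{\{j,m+1\}\mid j\in[m]\}\cup\{\{m+1\}\}=\S_{\E/\text{min}}$. Therefore $f$ is a surjective morphism $\S_{\C/\D}\to\S_{\E/\text{min}}$, giving $\S_{\E/\text{min}}\le\S_{\C/\D}$.

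Finally I would assemble the numerical statement. The upper bound $\odim(\S_{\C/\D})\le\max\{2,m\}=m$ is the preceding proposition. For the lower bound, open convexity in $\R^d$ is minor-closed, so $\odim(\S_{\C/\D})\ge\odim(\S_{\E/\text{min}})$, and applying Proposition~\ref{prop:SCovern} to $\E\subseteq 2^{[m]}$ (which contains every singleton and satisfies $\dim(\Delta(\E))+1=k$ by (i)) yields $\odim(\S_{\E/\text{min}})\ge\lceil m/k\rceil$. Combining gives $m\ge\odim(\S_{\C/\D})\ge\lceil m/k\rceil$. The step requiring the most care is (ii): one must verify that $f$ maps \emph{onto} $\S_{\E/\text{min}}$ and introduces no extra codewords, and this is exactly where the incomparability of the maximal codewords of $\D$ is essential, since it prevents the petals $d\cup\{n+1\}$ with $d$ non-maximal from producing images outside $\S_{\E/\text{min}}$.
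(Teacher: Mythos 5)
Your proposal is correct and follows essentially the same route as the paper: the morphism determined by the trunks of the maximal codewords of $\D$ together with $\Tk_{\S_{\C/\D}}(n+1)$, with $\E$ taken as the image of $\C$, the same case analysis on the three types of codewords, and the same identification $k=\dim(\Delta(\E))+1$. You merely make explicit a few details the paper leaves implicit (that a non-maximal $d\in\D$ contains no $D_i$, that $[n]\notin\D$, and the derivation of $m\ge\odim(\S_{\C/\D})\ge\lceil m/k\rceil$ from minor-closedness and Proposition~\ref{prop:SCovern}), which is fine.
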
\begin{proof}
We will mirror the proof of Proposition \ref{prop:SDeltaSn}. Let $F_1,\ldots, F_m$ be the maximal codewords of $\D$. For $i\in[m]$ define $T_i = \Tk_{\S_{\C/\D}}(F_i)$, and define $T_{m+1} = \Tk_{\S_{\C/\D}}(n+1)$. Let us consider the image of $\S_{\C/\D}$ under the morphism $f$ determined by $\{T_1,\ldots, T_{m+1}\}$.  Recall from Definition \ref{def:SCoverD} that the codewords of $\S_{\C/\D}$ come in the following types:\begin{itemize}
\item $c$ for $c\in \C$,
\item $d\cup \{n+1\}$ for $d\in \D$, and
\item $[n]$.
\end{itemize} The images of these codewords under $f$ are as follows:\begin{itemize}
\item $f(c)$ is equal to $\{i\in[m]\mid c \text{ contains $F_i$}\}$,
\item $f(d\cup \{n+1\})$ is equal to $\{m+1\}$ if $d$ is not equal to some $F_i$, and is equal to $\{i,m+1\}$ if $d = F_i$,
\item $f([n]) = [m]$ since $[n]$ contains all maximal codewords in $\D$, but not $n+1$. 
\end{itemize}
Let $\E\subseteq 2^{[m]}$ be the collection of codewords in the first bullet above, i.e. $\E$ is the image of $\C$ under $f$. Since the image of an intersection complete code is again intersection complete, we see that $\E$ is intersection complete. Moreover, $\E$ contains every singleton set since $f(F_i) = \{i\}$. 

The image of $\S_{\C/\D}$ under $f$ therefore contains codewords in $\E$, codewords of the form $\{i, m+1\}$ for all $i\in[m]$, the codeword $\{m+1\}$, and $[m]$. But these are exactly the codewords of $\S_{\E/\text{min}}$. Thus $\S_{\E/\text{min}}$ is the image of $\S_{\C/\D}$ under $f$. 

To prove the result, it remains to show that $k = \dim(\Delta(\E)) + 1$. The codewords in $\E$ are of the form $f(c) = \{i\in[m]\mid F_i\subseteq c\}$. Thus a codeword in $\E$ corresponds to a collection of maximal codewords in $\D$ all of which are contained in some $c\in \C$. A codeword in $\E$ with largest weight thus corresponds to a largest possible collection of maximal codewords in $\D$ whose union is contained in $\Delta(\C)$. The largest such collection has size $k$ by definition, so any largest codeword in $\E$ has weight $k$, proving the result. 
\end{proof}

\begin{remark}Generalizing Remark \ref{rem:Sm} from the last section, we see that among all codes of the form $\S_{\C/\D}$ with parameters $m$ and $k$ as described in Proposition \ref{prop:SCoverDSE}, the minimal elements are always of the form $\S_{\E/\text{min}}$ where $\E\subseteq 2^{[m]}$ contains all singletons, and $k =\dim(\Delta(\E))+1$. The following diagram shows this:\[
\includegraphics[width=30em]{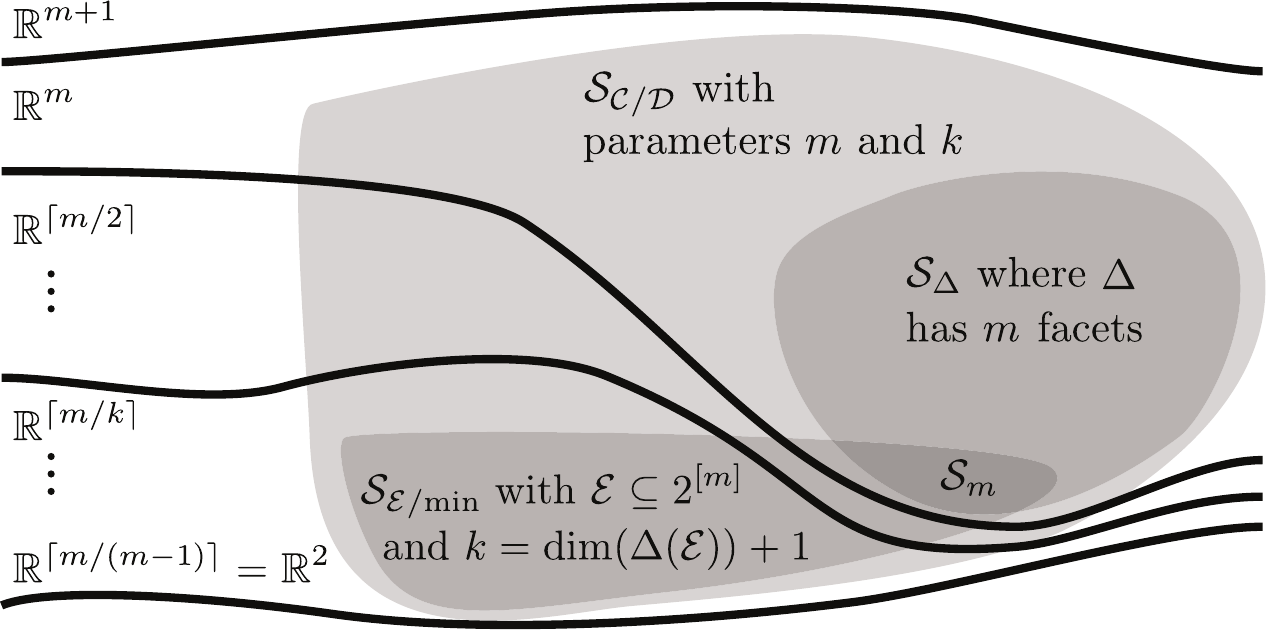}
\]
\end{remark}

These results use Theorem \ref{thm:flexible} to provide a more complete picture of the open embedding dimensions of intersection complete codes. There is still much to be done, however. As one example, the bound $m\ge \odim(\S_{\C/\D}) \ge \left\lceil \frac{m}{k}\right\rceil$ of Proposition \ref{prop:SCoverDSE} leaves quite a large gap for $k\ge 2$. Sharpening this bound based on the combinatorial structure of $\C$ and $\D$ would be a natural task of interest.

\section{Conclusion}\label{sec:conclusion}

We have seen a number of phenomena arise in the closed and open embedding dimensions of intersection complete codes. Some of these, like Theorems \ref{thm:complexes}, \ref{thm:cdimleodim}, and \ref{thm:cdimlinear}, gave us improved control over the embedding dimensions. Others, like Theorem \ref{thm:SDelta}, showed that embedding dimension may be difficult to control. With Theorem \ref{thm:flexible}, we developed new tools to understand open embedding dimension using $k$-flexible sunflowers, but the picture is still far from complete.

One direction for future work would be to search for analogous phenomena among codes that are not intersection complete. One could start with the following. 

\begin{question}\label{q:odimlecdim}
Does there exist a code $\C$ with $\odim(\C)<\cdim(\C)<\infty$?
\end{question}

Theorem \ref{thm:cdimleodim} tells us that such a code cannot be intersection complete. There are examples due to \cite{5neurons, openclosed} of codes with $\odim(\C) < \cdim(\C) = \infty$. \edit{These examples rely on a compactness argument to construct contradictory line segments of minimum distance in a hypothetical closed realization, proving $\cdim(\C) = \infty$.} A similar approach, paired with a classic convexity theorem that depends on dimension such as Radon's theorem, could yield a positive answer to Question \ref{q:odimlecdim}, and also possibly Question \ref{q:cdimexp} below. 

\begin{question}\label{q:cdimexp}
Little is known about whether closed embedding dimension can be large relative to the number of neurons, $n$. A few open areas to investigate are the following, in increasing order of difficulty:\begin{itemize}
\item Does there exist a code $\C\subseteq 2^{[n]}$ for which $\cdim(\C)$ is finite, but larger than $n-1$?
\item Does there exist a family of codes $\{\C_n\subseteq 2^{[n]}\mid n\ge 1\}$ such that $\cdim(\C_n)$ grows faster than any linear function of $n$?
\item Does there exist a family of codes $\{\C_n\subseteq 2^{[n]}\mid n\ge 1\}$ such that $\cdim(\C_n)$ grows faster than any polynomial function of $n$?
\end{itemize}  
Note that Theorem \ref{thm:cdimlinear} tells us that if such codes exist, they cannot be intersection complete. \edit{We have provided an} affirmative answers to the $\odim$ versions of the above questions \edit{via the family $\S_\Delta$, and in particular Corollary \ref{cor:exponential}. }
\end{question}



Regarding the tangled sunflower codes $\T_n$ of Section \ref{sec:Tn}, there is much to be done. A good first step would be to improve the embedding dimension bounds that we currently have, or, more ambitiously, find an exact characterization of the embedding dimension. 

\begin{question}
Does there exist an explicit characterization of the open embedding dimensions $t_n$ described in Definition \ref{def:Tn}? Can we improve the bounds of $\lceil n/2\rceil \le t_n \le n$? 
\end{question}

One might also consider codes that describe more than two sunflowers whose petals are ``tangled" (i.e. incident) in some way. This would be a significantly more complicated problem, but perhaps of some interest. Another generalization would be to consider a notion of tangled flexible sunflowers. This would be even more challenging to investigate, but would perhaps be more relevant to applications in experimental data. 

\begin{question}\label{q:simplicialabove}
In Section \ref{sec:pcode} we contextualized our results via a partial order on codes, denoted \edit{by} $\ParCode$. In this partial order, both $\odim$ and $\cdim$ are monotone functions. In \cite{morphisms} we showed that a code is intersection complete if and only if it lies below a simplicial complex in $\ParCode$. An interesting question is thus the following: do the simplicial complexes lying above an intersection complete code $\C$ in $\ParCode$ determine $\odim(\C)$? That is, among the simplicial complexes lying above $\C$ in $\ParCode$, does one have minimal embedding dimension equal to $\odim(\C)$?
\end{question}
A positive answer to the above question would reduce the problem of determining open embedding dimension for intersection complete codes to the problem of determining open embedding dimension for simplicial complexes, which is very closely tied to the well-studied problem of determining when a complex is $d$-representable, as described in \cite[Section 1.2]{tancer}. 

Note that the answer to \edit{Question \ref{q:simplicialabove}} cannot be positive when we replace $\odim$ with $\cdim$. Open and closed embedding dimension for simplicial complexes are always the same \edit{(recall Theorem \ref{thm:complexes})}, but the code $\S_3$ already shows that closed dimension and open dimension are different for intersection complete codes. \edit{In particular, $\cdim(\S_3) = 2$ while $\odim(\S_3) = 3$.} Thus the simplicial complexes \edit{that $\S_3$ is a minor of do not determine its closed embedding dimension, at least not simply as the minimum of their closed embedding dimensions, which must be at least 3.} 

In \edit{the proof of Theorem \ref{thm:SDelta} (}see Section \ref{sec:SDelta}\edit{)}, we showed that $\odim(\S_\Delta)$ was equal to the number of facets in $\Delta$ by showing that any realization of $\S_\Delta$ \edit{gives} rise to a realization of the code $\S_m$ described in Section \ref{sec:sunflowercodeversion}. \edit{Later, in Proposition \ref{prop:SDeltaSn}, we translated this argument to the context of minors, demonstrating a surjective morphism from $\S_\Delta$ to $\S_m$.} This technique could be generalized to analyze arbitrary codes as follows. Given a code $\C$, look for the largest $m$ so that there is a surjective morphism $\C\to \S_m$ \edit{(i.e. the largest $m$ so that $\S_m$ is a minor of $\C$)}. This largest $m$ then provides a lower bound on the open embedding dimension of $\C$. 

\edit{Importantly, this method is distinct from} existing techniques for providing lower bounds on $\odim(\C)$\edit{, which} rely on homological information obtained from $\Delta(\C)$ (see for example \cite{leray}). In contrast, the sunflower approach is completely agnostic to homology of $\Delta(\C)$. Whether \edit{sunflowers of convex open sets} could be useful in analyzing experimental data may be an interesting open question. \edit{As a start, it would be useful to determine whether searching for specific minors of a given code can be done in a computationally feasible manner.}

\begin{question}
Given a code $\C\subseteq 2^{[n]}$, is there an efficient algorithm to determine the largest $m$ so that $\S_m$ is a minor of $\C$? More generally, for what pairs of codes $\C\subseteq 2^{[n]}$ and $\D\subseteq 2^{[m]}$ can one efficiently recognize whether or not $\D$ is a minor of $\C$?
\end{question}

\section*{Acknowledgements}
 
 We would like to thank Florian Frick for raising the question of whether there exist open convex codes $\C\subseteq 2^{[n]}$ with $\odim(\C) > n-1$, and for interesting discussions on this question. Isabella Novik provided detailed feedback on initial drafts of this paper, as well as helpful discussions on its mathematical content. Anne Shiu also provided \edit{thorough} feedback on an initial draft of the paper. \edit{Finally, we would like to thank the anonymous referees for their comments, which greatly improved the presentation of our results. }

\bibliographystyle{plain}
\bibliography{neuralcodereferences}

\begin{thebibliography}{10}

\bibitem{hellytoday}
Nina Amenta, Jes\'us~A. De~Loera, and Pablo Sober\'on.
\newblock Helly's theorem: new variations and applications.
\newblock In {\em Algebraic and geometric methods in discrete mathematics},
  volume 685 of {\em Contemp. Math.}, pages 55--95. Amer. Math. Soc.,
  Providence, RI, 2017.

\bibitem{undecidability}
Aaron Chen, Florian Frick, and Anne Shiu.
\newblock Neural codes, decidability, and a new local obstruction to convexity.
\newblock {\em SIAM Journal on Applied Algebra and Geometry}, 3(1):44--66,
  2019.

\bibitem{openclosed}
Joshua Cruz, Chad Giusti, Vladimir Itskov, and Bill Kronholm.
\newblock On open and closed convex codes.
\newblock {\em Discrete \& Computational Geometry}, 61:247--270, 2016.

\bibitem{local15}
Carina Curto, Elizabeth Gross, Jack Jeffries, Katherine Morrison, Mohamed Omar,
  Zvi Rosen, Anne Shiu, and Nora Youngs.
\newblock What makes a neural code convex?
\newblock {\em SIAM Journal on Applied Algebra and Geometry}, 1(1):222--238,
  2017.

\bibitem{signatures}
Carina Curto, Elizabeth Gross, Jack Jeffries, Katherine Morrison, Zvi Rosen,
  Anne Shiu, and Nora Youngs.
\newblock Algebraic signatures of convex and non-convex codes.
\newblock {\em Journal of Pure and Applied Algebra}, 223(9):3919--3940, 2019.

\bibitem{neuralring13}
Carina Curto, Vladimir Itskov, Alan Veliz-Cuba, and Nora Youngs.
\newblock The neural ring: an algebraic tool for analyzing the intrinsic
  structure of neural codes.
\newblock {\em Bulletin of Mathematical Biology}, 75(9):1571--1611, 2013.

\bibitem{leray}
Carina {Curto} and Ram{\'o}n {Vera}.
\newblock The {L}eray dimension of a convex code.
\newblock {\em arXiv e-prints: 1612.07797}, 2016.

\bibitem{allcodesconvex}
Megan~K. Franke and Samuel Muthiah.
\newblock Every binary code can be realized by convex sets.
\newblock {\em Advances in Applied Mathematics}, 99:83--93, 2017.

\bibitem{grobner}
Rebecca Garcia, Luis Garcia-Puente, Ryan Kruse, Jessica Liu, Dane Miyata, Ethan
  Petersen, Kaitlyn Phillipson, and Anne Shiu.
\newblock Gr\"obner bases of neural ideals.
\newblock {\em International Journal of Algebra and Computation},
  28(4):553--571, 2018.

\bibitem{5neurons}
Sarah~Ayman Goldrup and Kaitlyn Phillipson.
\newblock Classification of open and closed convex codes on five neurons.
\newblock {\em Advances in Applied Mathematics}, 112:101948, 2020.

\bibitem{polarization}
Sema Gunturkun, Jack Jeffries, and Jeffrey Sun.
\newblock Polarization of neural rings.
\newblock {\em Journal of Algebra and Its Applications}, 19(8), 2019.

\bibitem{polarcomplex}
Vladimir Itskov, Alexander Kunin, and Zvi Rosen.
\newblock Hyperplane neural codes and the polar complex.
\newblock In {Nils A.} Baas, Gereon Quick, Markus Szymik, Marius Thaule, and
  {Gunnar E.} Carlsson, editors, {\em Topological Data Analysis - The Abel
  Symposium, 2018}, Abel Symposia, pages 343--369. Springer, 2020.

\bibitem{sunflowers}
R.~Amzi Jeffs.
\newblock Sunflowers of convex open sets.
\newblock {\em Advances in Applied Mathematics}, 111:101935, 2019.

\bibitem{morphisms}
{R. Amzi} Jeffs.
\newblock Morphisms of neural codes.
\newblock {\em SIAM Journal on Applied Algebra and Geometry}, 4:99--122, 2020.

\bibitem{amziphdthesis}
{R. Amzi} Jeffs.
\newblock {\em Morphisms, Minors, and Minimal Obstructions to Convexity of
  Neural Codes}.
\newblock PhD thesis, University of Washington, Seattle, 2021.
\newblock Available online at \url{http://hdl.handle.net/1773/48062}.

\bibitem{CUR}
{R. Amzi} Jeffs and Isabella Novik.
\newblock {Convex union representability and convex codes}.
\newblock {\em International Mathematics Research Notices}, 2019.

\bibitem{sparse}
{R. Amzi} Jeffs, Mohamed Omar, Natchanon Suaysom, Aleina Wachtel, and Nora
  Youngs.
\newblock Sparse neural codes and convexity.
\newblock {\em Involve, a Journal of Mathematics}, 12(5):737--754, 2015.

\bibitem{matroids}
Alexander {Kunin}, Caitlin {Lienkaemper}, and Zvi {Rosen}.
\newblock Oriented matroids and combinatorial neural codes.
\newblock {\em arXiv e-prints: 2002.03542}, page arXiv:2002.03542, 2020.

\bibitem{obstructions}
Caitlin Lienkaemper, Anne Shiu, and Zev Woodstock.
\newblock Obstructions to convexity in neural codes.
\newblock {\em Advances in Applied Mathematics}, 85:31--59, 2017.

\bibitem{matousek}
Ji\v{r}\'\i\ Matou\v{s}ek.
\newblock {\em Lectures on discrete geometry}, volume 212 of {\em Graduate
  Texts in Mathematics}.
\newblock Springer-Verlag, New York, 2002.

\bibitem{connected}
Raffaella {Mulas} and Ngoc~M {Tran}.
\newblock Minimal embedding dimensions of connected neural codes.
\newblock {\em Algebraic Statistics}, 11(1):99--106, 2020.

\bibitem{factorcomplex}
Alexander {Ruys de Perez}, Laura~Felicia Matusevich, and Anne Shiu.
\newblock Neural codes and the factor complex.
\newblock {\em Advances in Applied Mathematics}, 114:101977, 2020.

\bibitem{schrijver}
Alexander Schrijver.
\newblock {\em Theory of integer and linear programming}.
\newblock Discrete Mathematics and Optimization. Wiley Interscience, 1986.

\bibitem{2dplus1}
Martin Tancer.
\newblock d-representability of simplicial complexes of fixed dimension.
\newblock {\em Journal of Computational Geometry}, 2(1):183--188, 2011.

\bibitem{tancer}
Martin Tancer.
\newblock Intersection patterns of convex sets via simplicial complexes: a
  survey.
\newblock In {\em Thirty essays on geometric graph theory}, pages 521--540.
  Springer, New York, 2013.

\bibitem{ziegler-polytopesbook}
G\"unter~M. Ziegler.
\newblock {\em Lectures on polytopes}, volume 152 of {\em Graduate Texts in
  Mathematics}.
\newblock Springer-Verlag, New York, 1995.

\end{thebibliography}

\end{document}